\newtheorem{theorem}{Theorem}[section]
\newtheorem{lemma}[theorem]{Lemma}
\newtheorem{corollary}[theorem]{Corollary}
\let\@fnsymbol\@arabic
\def\jnl#1{\textit{\frenchspacing #1}}
\def\edel{\backslash}
\def\vdel{-}
\def\sdel{-}
\def\wheelfive{W_5}
\def\ctr{/}
\let\implies\Rightarrow
\let\implied\Leftarrow
\let\isom\cong
\def\G{\mathcal{G}}
\def\Gi{{\widetilde{\mathcal{G}}}}
\def\hangitem{\par\hangindent=2\parindent\hangafter=1}
\def\ptwo{R}
\begin{document}

\title{\bf A characterization of $K_{2,4}$-minor-free graphs}

\author{%
  M. N. Ellingham%
	\thanks{Supported by National Security Agency grant
H98230-13-1-0233 and Simons Foundation award 245715.  The United States
Government is authorized to reproduce and distribute reprints
notwithstanding any copyright notation herein.}%
	\\
  Department of Mathematics, 1326 Stevenson Center\\
  Vanderbilt University, Nashville, Tennessee 37212, U.S.A.\\
  \texttt{mark.ellingham@vanderbilt.edu}%
 \and
  Emily A. Marshall${}^{1,}$%
	\footnote{Work in this paper was done while at Vanderbilt
University.}%
	\\
  Department of Mathematics, 303 Lockett Hall\\
  Louisiana State University, Baton Rouge, Louisiana 70803, U.S.A.\\
  \texttt{emarshall@lsu.edu}
 \and
  \hbox to 0.8\hsize{\hss
    Kenta Ozeki%
	\footnote{Supported in part by JSPS KAKENHI Grant Number 25871053 %
	and by a Grant for Basic Science Research Projects from The %
	Sumitomo Foundation.}%
	\hss}\\
National Institute of Informatics,\\
2-1-2 Hitotsubashi, Chiyoda-ku, Tokyo 101-8430, Japan \\
and \\
  JST, ERATO, Kawarabayashi Large Graph Project, Japan\\
  \texttt{ozeki@nii.ac.jp}%
 \and
  Shoichi Tsuchiya%
	\\
  School of Network and Information, Senshu University,\\
	2-1-1 Higashimita, Tama-ku, Kawasaki-shi, Kanagawa, 214-8580, Japan\\ 
	\texttt{s.tsuchiya@isc.senshu-u.ac.jp}%
}

\maketitle

 \begin{abstract}
 We provide a complete structural characterization of
$K_{2,4}$-minor-free graphs.
 The $3$-connected $K_{2,4}$-minor-free graphs consist of nine small
graphs on at most eight vertices, together with a family of planar
graphs that contains $2n-8$ nonisomorphic
graphs of order $n$ for each $n \geq 5$ as well as $K_4$.
 To describe the $2$-connected $K_{2,4}$-minor-free graphs we
use \textit{$xy$-outerplanar} graphs, graphs embeddable in the plane
with a Hamilton $xy$-path so that all other edges lie on one side of
this path.
 We show that, subject to an appropriate connectivity condition,
$xy$-outerplanar graphs are precisely the graphs that have no rooted
$K_{2,2}$ minor where $x$ and $y$ correspond to the two vertices on one
side of the bipartition of $K_{2,2}$.
 Each $2$-connected $K_{2,4}$-minor-free graph is then (i) outerplanar,
(ii) the union of three $xy$-outerplanar graphs and possibly the edge
$xy$, or (iii) obtained from a $3$-connected $K_{2,4}$-minor-free graph
by replacing each edge $x_iy_i$ in a set $\{x_1 y_1, x_2 y_2, \ldots,
x_k y_k\}$ satisfying a certain condition by an $x_i y_i$-outerplanar
graph.
 From our characterization it follows that a
$K_{2,4}$-minor-free graph has a hamilton cycle if it is $3$-connected
and a hamilton path if it is $2$-connected.  Also, every $2$-connected
$K_{2,4}$-minor-free graph is either planar, or else toroidal and
projective-planar.
 \end{abstract}

\section{Introduction}

 The Robertson-Seymour Graph Minors project has shown that minor-closed
classes of graphs can be described by finitely many forbidden
minors.  Excluding a small number of minors can give
graph classes with interesting properties.
 The first such result was Wagner's demonstration \cite{wagner} that
planar graphs are precisely the graphs that are $K_5$- and
$K_{3,3}$-minor-free.

 Excluding certain special classes of graphs as minors seems to give
close connections to other graph properties.  One of the most important
open problems at present is Hadwiger's Conjecture, which relates
excluded complete graph minors to chromatic number.
 Our interest is in excluding complete bipartite graphs as minors. 
Together with connectivity conditions, and possibly other assumptions,
graphs with no $K_{s,t}$ as a minor can be shown to have interesting
properties relating to toughness, hamiltonicity, and other
traversability properties.  The simplest result of this kind follows
from a well-known consequence of Wagner's characterization of planar
graphs.  This consequence says that $2$-connected $K_{2,3}$-minor-free
graphs are outerplanar or $K_4$; hence, they are hamiltonian.
 For some recent examples of this type of result, involving toughness,
circumference, and spanning trees of bounded degree, see
 \cite{CEKMO11,
 	chen,
	OO12}. 

 Our work was originally motivated by trying to find forbidden minor
conditions to make $3$-connected planar graphs, or $3$-connected graphs
more generally, hamiltonian.  In examining the hamiltonicity of
$3$-connected $K_{2,4}$-minor-free graphs we were led to a complete
picture of their structure, which we then extended to
$K_{2,4}$-minor-free graphs in general.  Using this, we show in Section
\ref{consequences} that $3$-connected $K_{2,4}$-minor-free graphs are
hamiltonian, and that $2$-connected $K_{2,4}$-minor-free graphs have
hamilton paths.

 For $K_{2,4}$-minor-free graphs, or $K_{2,t}$-minor-free graphs in
general, there are a number of previous results.
 Dieng and Gavoille (see Dieng's thesis \cite{dieng}) showed
that every $2$-connected $K_{2,4}$-minor-free graph contains two vertices
whose removal leaves the graph outerplanar. Streib
and Young \cite{streib} used Dieng and Gavoille's result to show that
the dimension of the minor poset of a connected graph $G$ with no
$K_{2,4}$ minor is polynomial in $|E(G)|$.
 Chen et~al.~\cite{chen} proved that $2$-connected $K_{2,t}$-minor-free
graphs have a cycle of length at least $n/t^{t-1}$.
 Myers \cite{myers} proved that  a $K_{2,t}$-minor-free graph $G$ with
$t \geq 10^{29}$ satisfies $|E(G)| \leq (1/2)(t+1)(n-1)$; more recently
Chudnovsky, Reed and Seymour \cite{chud} showed that this is valid for
all $t \ge 2$, and provided stronger bounds for $2$-, $3$- and
$5$-connected graphs.  Our results improve their bound for $3$-connected
graphs when $t=4$.
 An unpublished paper of Ding \cite{ding2} proposes that
$K_{2,t}$-minor-free graphs can be built from slight variations of
outerplanar graphs and graphs of bounded order by adding `strips' and
`fans' using an operation that is a variant of a $2$-sum (and which
corresponds to the idea of replacing subdividable sets of edges that is
used later in this paper).
 Ding's result involves subgraphs that have $K_{2,4}$ minors, and so not
all aspects of his structure can be present in the case of
$K_{2,4}$-minor-free graphs; our results illuminate the extent to which
Ding's structure still holds.

 As part of our work we use rooted minors, where particular vertices of
$G$ must correspond to certain vertices of $H$ when we find $H$ as a
minor in $G$.
 For example, Robertson and
Seymour \cite{ix} characterized all $3$-connected graphs that have no
$K_{2,3}$ minor rooted at the three vertices on one side of the
bipartition.
 Fabila-Monroy and Wood \cite{fm-wood} characterized graphs with no
$K_4$ minor rooted at all four vertices.
 Demasi \cite{demasi} characterized all $3$-connected planar graphs with
no $K_{2,4}$ minor rooted at the four vertices on one side of the
bipartition.
 In this paper we characterize all graphs with no $K_{2,2}$ minor rooted
at two vertices on one side of the bipartition.
 This result is useful not only here, but also in the authors' proof
that $3$-connected $K_{2,5}$-minor-free planar graphs are hamiltonian
(see \cite{dissertation}).

 We begin with some definitions and notation. All graphs are simple.
 We use `$\vdel$' to denote set difference and deletion of vertices from
a graph, `$\edel$' to denote deletion of edges, `$\ctr$' to denote
contraction of edges, and `$+$' to denote both addition of edges and
join of graphs.  Since we work with simple graphs, when we contract an edge
any parallel edges formed are reduced to a single edge.

 A graph $H$ is a \textit{minor} of a graph $G$ if $H$ is
isomorphic to a graph formed from $G$ by contracting and deleting edges
of $G$ and deleting isolated vertices of $G$.  We delete multiple edges
and loops, so all minors are simple. Another way to think of a
$k$-vertex minor $H$ of $G$ is as a collection of disjoint subsets of
the vertices of $G$, $(V_1,V_2,\ldots,V_k)$ where each $V_i$ corresponds to
a vertex $v_i$ of $H$, where $G[V_i]$ (the subgraph of $G$ induced by
the vertex set $V_i$) is connected for $1 \leq i \leq k$, and for each
edge $v_iv_j \in E(H)$ there is at least one edge between $V_i$ and
$V_j$ in $G$.
 We call this a \textit{model} of $H$ in $G$.
 We will often identify minors in graphs by describing the sets
$(V_1,V_2,\ldots,V_k)$.  The set $V_i$ is known as the \textit{branch
set} of $v_i$, and may be thought of as the set of vertices in $G$ that
contracts to $v_i$ in $H$.

 Suppose we are given $S \subseteq V(G)$, $T \subseteq V(H)$, and a
bijection $f : S \to T$.  We say that a model of $H$ in $G$ is a
\textit{minor rooted at $S$ in $G$ and at $T$ in $H$ by $f$} if each $v
\in S$ belongs to the branch set of $f(v) \in T$.
 If the symmetric group on $T$ is a subgroup of the automorphism group
of $H$ (as it will be in our case) then the exact bijection $f$ between
$S$ and $T$ does not matter.

 A graph is \textit{$H$-minor-free} if it does not contain $H$ as a
minor. A \textit{$k$-separation} in a graph $G$ is a pair $(H,J)$ of
edge-disjoint subgraphs of $G$ with $G=H\cup J$, $|V(H) \cap V(J)|=k$,
$V(H)\sdel V(J) \neq \emptyset$, and $V(J)\sdel V(H) \neq \emptyset$.

 Suppose $K_{2,t}$ has bipartition $(\{a_1,a_2\}, \{b_1,b_2,\ldots,b_t\})$.
 Let $R_1$ and $R_2$ be the branch sets of $a_1$ and $a_2$ in a
model of $K_{2,t}$ in a graph $G$.  Suppose $B$ is the branch
set of $b_i$ for some $i$. Then there is a path $v_1v_2\ldots v_k$, $k \geq
3$, with $v_1 \in \ptwo_1$, $v_k \in \ptwo_2$, and $v_i \in B$ for $2 \leq i
\leq k-1$.  Let $B'=\{v_2\}$ and let $\ptwo_2'=\ptwo_2 \cup
\{v_3,\ldots,v_{k-1}\}$. We can replace $B$ with $B'$ and $\ptwo_2$ with
$\ptwo_2'$ and still have a model of $K_{2,t}$ (possibly using
fewer vertices of $G$ than before).
 Hence without loss of generality we may assume that the branch set of
each vertex $b_i$, $1 \leq i \leq t$, contains a single vertex $s_i$. Let
$S=\{s_1,s_2,\ldots,s_t\}$. We say $(\ptwo_1,\ptwo_2;S)$ represents a
\textit{standard $K_{2,t}$ minor}. Observe that $G$
contains a $K_{2,t}$ minor if and only if $G$ contains a standard
$K_{2,t}$ minor. Note that the standard model also applies to $K_{2,t}$
minors rooted at two vertices corresponding to $a_1$ and $a_2$.

 A \textit{wheel} is a graph $W_n=K_1 + C_{n-1}$ with $n \ge 4$.  A
vertex of degree $n-1$ in $W_n$ is a \textit{hub} and its incident edges
are \textit{spokes} while the remaining edges form a cycle called the \textit{rim}.  In
$W_4=K_4$ every vertex is a hub and every edge is both a spoke and a rim
edge, but in $W_n$ for $n \ge 5$ there is a unique hub and the edges are
partitioned into spokes and rim edges.
 Note that we identify wheels by their number of vertices, rather than
their number of spokes.

 A graph is \textit{outerplanar} if it has an \textit{outerplane
embedding}, an embedding in the plane with every vertex on the outer
face.
 \smallskip

In the next section, we define a class of graphs and describe several
small examples which together make up all $3$-connected
$K_{2,4}$-minor-free graphs. We begin with $3$-connected graphs because
all $4$-connected graphs on at least six vertices have a
$K_{2,4}$ minor. This is obvious for complete graphs. Otherwise, a pair
of nonadjacent vertices and the four internally disjoint paths between
them guaranteed by Menger's Theorem yield a $K_{2,4}$ minor. In Section
\ref{2connected} we extend the characterization to $2$-connected graphs. The
generalization to all graphs follows because a graph that is not
$2$-connected is $K_{2,4}$-minor-free if and only if each of its blocks
is $K_{2,4}$-minor-free.  
 Section \ref{consequences} presents applications of our
characterization to hamiltonicity, topological properties, counting, and
edge bounds.

\section{The $3$-connected case}\label{3connected}

All graphs $G$ with $|V(G)|<6$ are trivially $K_{2,4}$-minor-free; the
$3$-connected ones are $K_5$, $K_5\edel e$, $\wheelfive$, and $K_4=W_4$.
For $|V(G)| \geq 6$, first we define a class of graphs and
identify those that are $3$-connected and $K_{2,4}$-minor-free. We then
look at some small graphs that do not fit into this class. Finally, we
show that every $3$-connected $K_{2,4}$-minor-free graph is one of these
we have described.    

\subsection{A class of graphs $G_{n,r,s}^{(+)}$}
\label{ss:classG}

For $n \geq 3$ and $r, s \in \{0, 1, \ldots, n-3\}$, let $G_{n,r,s}$
consist of a spanning path $v_1v_2\ldots v_n$, which we call the
\textit{spine}, and edges $v_1v_{n-i}$ for $1 \leq i \leq r$ and
$v_nv_{1+j}$ for $1 \leq j \leq s$. The graph $G_{n,r,s}^+$ is
$G_{n,r,s}+v_1v_n$; we call $v_1v_n$ the \textit{plus edge}.
 All graphs $G^{(+)}_{n,r,s}$ are planar.
 The graph $G^+_{n,1,n-3}$ is a wheel $W_n$ with hub $v_n$.
 Examples are shown in Figure~\ref{fig:G_p,r,sm}.
 Since $G_{n,r,s}^{(+)} \isom G_{n,s,r}^{(+)}$ we often assume $r \leq
s$. 

\begin{figure}
	\centering \scalebox{.8}{\includegraphics{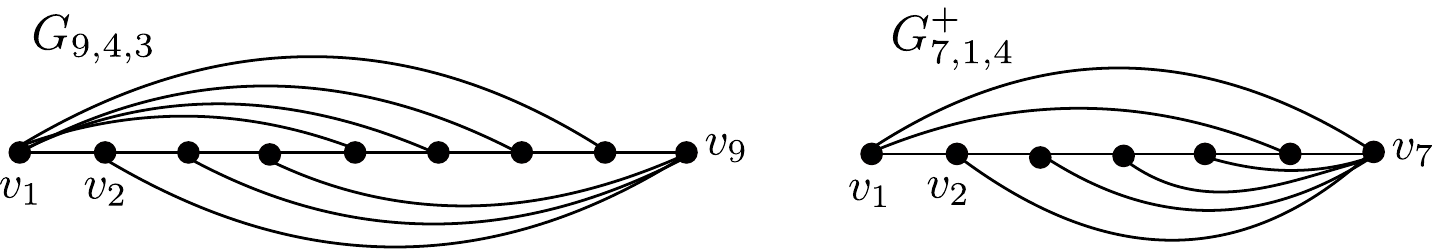}}
	\captionof{figure}{}
	\label{fig:G_p,r,sm} 
\end{figure}  

In the following three lemmas we first determine when a graph
$G_{n,r,s}^{(+)}$ is $3$-connected, and then when it is
$K_{2,4}$-minor-free.

\begin{lemma} For $n \geq 4$, $G = G_{n,r,s}^{(+)}$ is $3$-connected if and
only if (i) $r=1$, $s = n-3$, and the plus edge is present (or
symmetrically $s=1$, $r = n-3$, and the plus edge is present) or (ii)
$r,s \geq 2$ and $r+s \geq n-2$. 
\label{lem:G3conn}
\end{lemma}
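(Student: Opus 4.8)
The plan is to reduce everything to two elementary facts about $G=G_{n,r,s}^{(+)}$: its degree sequence and a ``covering'' property of the extra edges. First I would record the neighbourhoods. The vertex $v_1$ is joined along the spine to $v_2$, by the $r$ extra edges to $v_{n-1},v_{n-2},\dots,v_{n-r}$, and by the plus edge to $v_n$ when present, so $\deg v_1=r+1$ or $r+2$ according as the plus edge is absent or present; symmetrically for $v_n$ with $s$ in place of $r$. An internal vertex $v_k$ ($2\le k\le n-1$) has its two spine neighbours together with $v_1$ exactly when $k\ge n-r$ and $v_n$ exactly when $k\le 1+s$. Hence every internal vertex has a neighbour in $\{v_1,v_n\}$ if and only if the intervals $[n-r,n-1]$ and $[2,1+s]$ cover $[2,n-1]$, which happens precisely when $r+s\ge n-2$; otherwise each $v_k$ with $1+s<k<n-r$ has degree $2$.

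For the necessity direction I would only use that a $3$-connected graph has minimum degree at least $3$. The internal-vertex computation then forces $r+s\ge n-2$. Assuming without loss of generality that $r\le s$ (permissible since $G_{n,r,s}^{(+)}\isom G_{n,s,r}^{(+)}$), the condition $\deg v_1\ge 3$ rules out $r=0$, and splits the rest: if $r=1$ the plus edge must be present and then $r+s\ge n-2$ together with $s\le n-3$ gives $s=n-3$, which is alternative~(i); if $r\ge 2$ then $s\ge r\ge 2$ and $r+s\ge n-2$, which is alternative~(ii). Thus $3$-connectivity implies (i) or (ii).

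For sufficiency I would first check that in each of (i) and (ii) the minimum degree is at least $3$ (immediate from the formulas above, noting that (ii) is vacuous for $n=4$), and that $n\ge 4$, so it remains to show $G$ has no $2$-cut. Case (i) is disposed of at once, since there $G=W_n$ is a wheel and wheels are $3$-connected. For case (ii) I would delete an arbitrary pair $\{a,b\}$ and use the spine: removing $a,b$ breaks the path $v_1v_2\dots v_n$ into at most three subpaths, each internally connected, and I must show they reunite in $G-\{a,b\}$. If both $a,b$ are internal, both hubs survive and the covering property ($r+s\ge n-2$) puts every surviving internal vertex adjacent to $v_1$ or $v_n$; the only real task is to reconnect the two ``outer'' segments, which is done through the surviving edge $v_1v_{n-1}$ (or, symmetrically, $v_nv_2$) whenever these endpoints survive. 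If a hub is deleted the argument is easier, and deleting both hubs leaves exactly the spine path $v_2\dots v_{n-1}$, which is connected.

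The step I expect to be the main obstacle is the last reconnection in the all-internal case, specifically the corner pair $\{a,b\}=\{v_2,v_{n-1}\}$, where both of the convenient edges $v_1v_{n-1}$ and $v_nv_2$ have been destroyed. Here I would route a path $v_1\,v_{n-2}\cdots v_3\,v_n$ along the middle segment, which exists precisely because $r\ge 2$ (giving $v_1v_{n-2}$) and $s\ge 2$ (giving $v_nv_3$); this is the one place where the full strength $r,s\ge 2$ of alternative~(ii), rather than merely $r+s\ge n-2$, is needed, and it is what forces the $3$-connected members of the family to split into the wheel case (i) and the case (ii). Once this corner and its symmetric images are handled, no pair disconnects $G$, so $G$ is $3$-connected.
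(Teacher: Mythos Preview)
Your proof is correct.  The necessity direction is essentially the same as the paper's: both identify degree-$2$ vertices when the stated conditions fail.  For sufficiency in case~(ii), however, you take a genuinely different route.  The paper verifies Menger's criterion directly, constructing three internally disjoint paths between every pair of vertices and running through several subcases depending on whether the pair contains $v_1$, $v_n$, both, or neither.  You instead show that $G-\{a,b\}$ is connected for every pair $\{a,b\}$, with cases according to how many of the hubs $v_1,v_n$ are deleted.  Your argument is somewhat more economical because it exploits the covering property (every internal vertex is adjacent to a hub) globally: once both hubs survive, connectedness of $G-\{a,b\}$ reduces to finding a single $v_1v_n$-path, and you correctly isolate $\{a,b\}=\{v_2,v_{n-1}\}$ as the one pair where neither $v_1v_{n-1}v_n$ nor $v_1v_2v_n$ is available, so that the full hypothesis $r,s\ge 2$ (not merely $r+s\ge n-2$) must be invoked.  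The paper's explicit three-path construction, by contrast, yields reusable certificates but at the cost of more bookkeeping.
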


\begin{proof} Assume that $r \le s$.
 To prove the forward direction, assume $G$ is $3$-connected and first
suppose $r=1$. If the plus edge is not present, then $v_1$ has degree
$2$ and $\{v_2,v_{n-1}\}$ is a $2$-cut. Similarly if $s \leq n-4$, then
$v_{n-2}$ has degree $2$ and $\{v_{n-3},v_{n-1}\}$ is a $2$-cut. Next
suppose $r,s \geq 2$. If $r+s \leq n-3$, then there is necessarily a
degree $2$ vertex $v_i$ with $4 \leq i \leq n-3$ and hence a $2$-cut in
$G$.

 To prove the reverse direction, assume (i) or (ii).  If (i) holds, $G$
is a wheel, which is $3$-connected, so we may assume that (ii) holds.
 To show $3$-connectedness we find three internally disjoint paths
between each possible pair of vertices. For $v_1$ and $v_n$ we have
paths $v_1v_2v_n$, $v_1v_{n-1}v_n$, and
$v_1v_{n-2}v_{n-3}...v_{n-r}v_{1+s}v_n$ (where possibly
$v_{n-r}=v_{1+s}$). Next suppose that only one of $v_1$ and $v_n$ is in
the considered pair, say $v_1$ without loss of generality. First
consider $v_1$ and $v_i$ where $n-r \leq i \leq n-1$. When $v_1v_{i+1}
\in E(G)$, then the three disjoint paths are $v_1v_2...v_i$, $v_1v_i$,
and $v_1v_{i+1}v_i$. When $v_1v_{i+1} \notin E(G)$, then $v_1v_{i-1} \in
E(G)$ and $v_{i-1} \neq v_2$ and the three disjoint paths are
$v_1v_2v_nv_{n-1}...v_i$, $v_1v_{i-1}v_i$, and $v_1v_i$. Now consider
$v_1$ and $v_i$ where $2 \leq i \leq n-r-1$. Then the three disjoint
paths are $v_1v_2...v_i$, $v_1v_{n-r}v_{n-r-1}...v_i$, and
$v_1v_{n-r+1}v_{n-r+2}...v_nv_i$.
 Finally consider $v_i$ and $v_j$ where $i < j$ and $i,j \neq 1,n$. If
$v_i$ and $v_j$ are both adjacent to the same end vertex, say $v_1$,
where $i,j \neq 2$, then the three disjoint paths are
$v_iv_{i+1}...v_j$, $v_iv_1v_j$, and $v_iv_{i-1}...v_2v_nv_{n-1}...v_j$.
Otherwise the three disjoint paths are $v_iv_{i+1}...v_j$,
$v_iv_{i-1}...v_1v_j$, and $v_jv_{j+1}...v_nv_i$.
 \end{proof}

\begin{lemma} For $n \geq 6$, $G=G_{n,r,s}^{(+)}$ is $K_{2,4}$-minor-free if and only if $r+s \leq n-1$. 
\label{lem:GK24}
\end{lemma}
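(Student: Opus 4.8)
The plan is to reformulate via standard minors, exploit the fact that deleting the two ends of the spine leaves a path, dispatch all but one configuration by an easy interval count, and reserve the real work for the remaining case. By the standard-minor reduction established above, $G$ contains a $K_{2,4}$ minor if and only if there are disjoint connected sets $R_1, R_2 \subseteq V(G)$ and four distinct vertices $s_1,\dots,s_4 \notin R_1\cup R_2$, each adjacent to both $R_1$ and $R_2$. The structural key is that $G \vdel \{v_1,v_n\}$ is exactly the spine path $v_2 v_3 \cdots v_{n-1}$: every non-spine edge of $G$ is incident with $v_1$ or $v_n$, so the only edges among $\{v_2,\dots,v_{n-1}\}$ are spine edges. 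Hence any connected subgraph of $G$ avoiding $v_1$ and $v_n$ induces an interval of the spine. I will also record the spine-neighbourhoods $N(v_1)\cap\{v_2,\dots,v_{n-1}\}=\{v_2\}\cup\{v_{n-r},\dots,v_{n-1}\}$ and $N(v_n)\cap\{v_2,\dots,v_{n-1}\}=\{v_2,\dots,v_{s+1}\}\cup\{v_{n-1}\}$, which shows that the direct neighbours of $v_1$ cluster at the right end of the spine and those of $v_n$ at the left end.

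For the forward direction I argue the contrapositive: if $r+s\ge n$ then $G$ has a $K_{2,4}$ minor. Using $G_{n,r,s}^{(+)}\isom G_{n,s,r}^{(+)}$ I assume $r\le s$; since $r,s\le n-3$ and $r+s\ge n$, both $r,s\ge 3$. I then show $v_1$ and $v_n$ have at least four common neighbours, which together with $\{v_1,v_n\}$ form a $K_{2,4}$ subgraph. From the neighbourhoods above, $v_2$ and $v_{n-1}$ are common neighbours (using $r,s\ge 1$), and the overlap of the two blocks gives the interval of common neighbours $\{v_{n-r},\dots,v_{s+1}\}$, which has $r+s-n+2\ge 2$ vertices and is disjoint from $\{v_2,v_{n-1}\}$; this yields at least four, as required.

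For the reverse direction I assume $r+s\le n-1$ and a standard $K_{2,4}$ minor $(R_1,R_2;\{s_1,\dots,s_4\})$, and split on how $v_1,v_n$ meet the branch sets. In every case except the one where $v_1$ and $v_n$ lie in \emph{different} branch sets, at least one branch set, say $R_2$, avoids both $v_1$ and $v_n$ and is therefore a spine interval $[v_c,v_d]$; the only spine vertices adjacent to it are $v_{c-1}$ and $v_{d+1}$. If in addition $v_1,v_n\notin R_1\cup R_2$, then $R_1$ is also a spine interval, a spine $s_i$ must be adjacent to \emph{both} intervals and hence lies in a length-one gap between them (so at most one such vertex), while $v_1,v_n$ account for at most two more, giving at most three valid $s_i$. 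Otherwise at least one of $v_1,v_n$ lies in $R_1$, so at most one of them is free to be an $s_i$, and together with $v_{c-1},v_{d+1}$ this again gives at most three. Each of these subcases contradicts the existence of four $s_i$, and notably none of them uses the hypothesis $r+s\le n-1$.

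The entire force of the hypothesis is therefore concentrated in the main case $v_1\in R_1$, $v_n\in R_2$, where all four $s_i$ are spine vertices adjacent to both branch sets; this is the step I expect to be the main obstacle. Here $R_1$ and $R_2$ meet the spine in unions of intervals, and because $v_n\notin R_1$ (resp. $v_1\notin R_2$) each interval of $R_1$ must contain a neighbour of $v_1$ and each interval of $R_2$ a neighbour of $v_n$. I must show that when $r+s\le n-1$ at most three spine vertices are adjacent to both $R_1$ and $R_2$. My intended route is an exchange argument reducing to the base configuration $R_1=\{v_1\}$, $R_2=\{v_n\}$: whenever a branch set contains a spine vertex, I delete a suitable endpoint of one of its intervals and check that the number of valid $s_i$ does not increase, the vertex that may lose adjacency being compensated by the deleted vertex becoming available again. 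For the base configuration the count is exactly $|N(v_1)\cap N(v_n)|$ restricted to the spine, which is at most three precisely because $r+s\le n-1$ forces the right block $\{v_{n-r},\dots,v_{n-1}\}$ and left block $\{v_2,\dots,v_{s+1}\}$ to overlap in at most one vertex (beyond the two boundary vertices $v_2$ and $v_{n-1}$). The delicate point is controlling exactly how the set of valid $s_i$ evolves under shrinking the branch sets; should the exchange prove awkward to make airtight, the fallback is a direct count that partitions the spine into the left block, the right block, and the gap between them and bounds the valid $s_i$ region by region.
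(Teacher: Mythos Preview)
Your forward direction and your reduction to the main case are both correct. The case analysis showing that when $v_1$ and $v_n$ do not lie in different branch sets there are at most three valid $s_i$ is clean and is actually a different route from the paper's: the paper uses that $G\vdel v_1$ is outerplanar (hence $K_{2,3}$-minor-free) and that $G\vdel\{v_1,v_n\}$ has no $K_{1,4}$ minor, whereas you argue directly via spine intervals. Your argument is more elementary and just as short.

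The main case, however, has a genuine gap. First, the direction of your exchange is muddled: to conclude the count for arbitrary $(R_1,R_2)$ is at most the count for the base $(\{v_1\},\{v_n\})$, you need shrinking to \emph{not decrease} the count (your compensation remark suggests you mean this), not ``not increase'' as written. More seriously, the compensation fails in general. If the interval of $R_1$ you shrink is $[v_2,v_b]$ with $v_b$ in the gap $\{v_{s+2},\dots,v_{n-r-1}\}$, you are forced to remove $v_b$ (removing $v_2$ disconnects $R_1$ when $r<n-3$); then $v_{b+1}$ may lose validity, but $v_b$ has no neighbour in $R_2$ (its only neighbours are $v_{b-1}\in R_1'$, $v_{b+1}\notin R_2$, and $v_1$), so $v_b$ cannot compensate and the count drops. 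Your fallback ``direct count by region'' is the right idea but is not actually carried out. The paper's argument for this case is quite different: with $A=N(v_1)\setminus\{v_2\}$ and $B=N(v_n)\setminus\{v_{n-1}\}$ it shows $|S\cap A|\le 2$ by observing that three elements $s_1,s_2,s_3$ of $S\cap A$ in spine order would have $\{v_1,s_1,s_3\}$ separating $s_2$ from $v_n$, so $R_2$ cannot reach $s_2$; symmetrically $|S\cap B|\le 2$, and one then argues that the required paths from $R_1$ to $s_3\in B$ and from $R_2$ to $s_2\in A$ cannot coexist. This separation argument, or something like it, is what your fallback would need to become.
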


 \begin{proof}
 To prove the forward direction, suppose $r+s \geq n$. Then there are
vertices $v_i$ and $v_{i+1}$ such that both $v_1$ and $v_n$ are adjacent
to both $v_i$ and $v_{i+1}$ and $3 \leq i \leq n-3$. Then there is a
standard $K_{2,4}$ minor $(\ptwo_1,\ptwo_2;S)$ in $G$: let
$S=\{v_2,v_i,v_{i+1},v_{n-1}\}$, $\ptwo_1=\{v_1\}$, and $\ptwo_2=\{v_n\}$. 

Now suppose that $r+s \leq n-1$. We claim that if $G$ has a standard $K_{2,4}$ minor $(\ptwo_1,\ptwo_2;S)$, then $v_1 \in \ptwo_1$ and $v_n \in \ptwo_2$ (or vice versa). The graph $G\vdel v_1$ is outerplanar and thus has no $K_{2,3}$
minor. Therefore, if $G$ has a $K_{2,4}$ minor, then it must include $v_1$.
 We cannot have $v_1 \in S$ because then the outerplanar graph
$G\vdel v_1$ would have a $K_{2,3}$ minor. 
 By symmetry, $v_n$ must also be included in the
minor and $v_n \notin S$. 
 If $v_1,v_n \in \ptwo_i$, then $G\vdel\{v_1,v_n\}$ has a $K_{1,4}$
minor, but $G\vdel\{v_1,v_n\}$ is a path and there is no $K_{1,4}$ minor in
a path. The only remaining possibility is $v_1 \in \ptwo_1$ and $v_n \in
\ptwo_2$ (or vice versa).

 Let $N(v)$ denote the set of neighbors of $v$.
 Let $A = N(v_1)\sdel\{v_2\} = \{v_{n-r},v_{n-r+1},\ldots,v_{n-1}\}$ and $B
= N(v_n)\sdel\{v_{n-1}\} = \{v_2,v_3,\ldots,v_{s+1}\}$, which intersect
only if $v_{n-r}=v_{s+1}$. Suppose $G$ has a standard $K_{2,4}$ minor
$(\ptwo_1,\ptwo_2;S)$. Then by the claim proved in the previous paragraph, $v_1 \in \ptwo_1$
and $v_n \in \ptwo_2$. We consider the makeup of $S$. Suppose
$\{s_1,s_2,s_3\} \subseteq S \cap A$, in that order along the spine.
Since $\{v_1,s_1,s_3\}\subseteq \ptwo_1 \cup \{s_1,s_3\}$ separates
$s_2$ and $v_n$, and $v_n \in \ptwo_2$, we cannot have $\ptwo_2$
adjacent to $s_2$, which is a contradiction. Thus $|S \cap A| \leq 2$.
Symmetrically, $|S \cap B| \leq 2$. We must have $s_1,s_2 \in S \cap A$
and $s_3, s_4 \in S \cap B$ in the order $s_4,s_3,s_2,s_1$ along the
spine. Since $v_n \in \ptwo_2$, there must be a $v_n s_2$-path in
$G\vdel\{v_1,s_1,s_3,s_4\}$, and hence $s_3 \ne v_{s+1}$. Then
$v_{s+1}$ is a cutvertex separating $v_n$ and $s_2$ in
$G\vdel\{v_1,s_1,s_3,s_4\}$, so $v_{s+1} \in \ptwo_2$. Now there
must also be a $v_1 s_3$-path in $G\vdel\{v_n,v_{s+1},s_4\}$ but no such
path exists. Thus there is no $K_{2,4}$ minor.  \end{proof}

 Define $\G$ to be the set of (labeled) graphs of the form
$G^{(+)}_{n,r,s}$ that are both $3$-connected and $K_{2,4}$-minor-free.
 Of the four $3$-connected graphs on fewer than six vertices, three are
planar, and all three belong to $\G$: $K_5\edel e \isom G_{5,2,2}^+$,
$\wheelfive \isom G_{5,1,2}^+ \isom G_{5,2,2}$, and $K_4=W_4 \isom
G_{4,1,1}^+$.  From this and Lemmas~\ref{lem:G3conn} and~\ref{lem:GK24}
we get
 $$\G=\{G_{n,1,n-3}^+, G_{n,n-3,1}^+: n \geq 4\} \cup
  \{G_{n,r,s}^{(+)}: n \geq 5,\;
	r, s \in \{2, 3, \ldots, n-3\},\;
	r+s=n-1 \text{ or } n-2\}.$$  
 Let $\Gi$ denote the class of all graphs isomorphic to a graph in $\G$. Note that graphs in $\G$ are $3$-sums of two wheels, a fact we will see in more detail later on.

There are some isomorphisms between graphs in $\G$ and also
symmetries within certain graphs of the class.
 Let $\rho=\rho_n$ be the involution with $\rho(v_i) = v_{n+1-i}$
for $1 \le i \le n$.  Then $\rho$ provides the isomorphism (in both
directions) between $G^{(+)}_{n,r,s}$ and $G^{(+)}_{n,s,r}$ that we have
already noted; if $r=s$ it is an automorphism.
 The graph $G_{n,1,n-3}^+$ is isomorphic to $W_n$, with $v_n$ as a hub. 
It has the obvious symmetries.

\begin{figure}[h]
	\centering \scalebox{.7}{\includegraphics{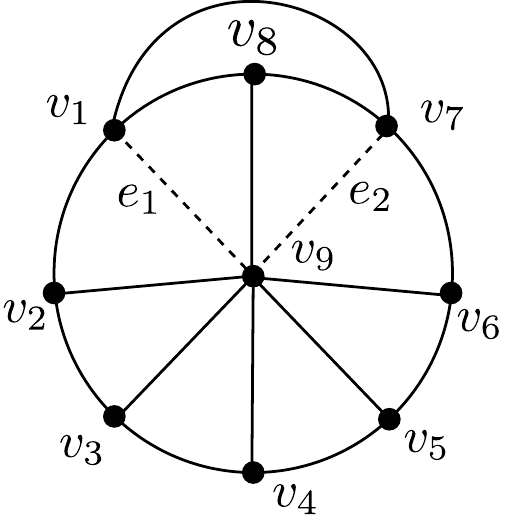}}
	\captionof{figure}{}
	\label{fig:isomorph} 
\end{figure}

 Define $\sigma=\sigma_n$ to be the involution fixing $v_{n-1}$ and
$v_n$ and with $\sigma(v_i)=v_{n-1-i}$ for $1 \leq i \leq n-2$.  Then
$\sigma$ is an automorphism of $G_{n,2,n-4}$, an isomorphism (in both
directions) between $G_{n,2,n-4}^+$ and $G_{n,2,n-3}$, and an
automorphism of $G_{n,2,n-3}^+$.
 The case $n=9$ is illustrated in Figure~\ref{fig:isomorph}, where
$\sigma=\sigma_9$ corresponds to reflection about a vertical axis.
 The graph without the dashed edges $e_1$ and $e_2$ is $G_{9,2,5}$.
 With the edge $e_1$, the graph is $G_{9,2,5}^+$ and with $e_2$, the
graph is $G_{9,2,6}$.
 With both edges $e_1$ and $e_2$, the graph is $G_{9,2,6}^+$.
 In general $\sigma$ maps the spine $P=v_1v_2\ldots v_n$ to the path
$\sigma(P)=v_{n-2}v_{n-3}\ldots v_2v_1v_{n-1}v_n$.
 For $G^{(+)}_{n,r,s}$ with $r=2$ we call $\sigma(P)$ the \textit{second
spine}.
 When $s=2$ we have a similar involution $\sigma'$, and the path
$\sigma'(P) = v_1 v_2 v_n v_{n-1} \ldots v_4 v_3$ can be regarded as an
extra spine.  When $r=s=2$, $\sigma'(P)$ is the image of $\sigma(P)$
under the automorphism $\rho$.

 Finally, besides some obvious special symmetries when $n=4$ or $5$,
$G_{6,2,2}$ is vertex-transitive and is isomorphic to the triangular
prism.

These symmetries and isomorphisms will be important later, particularly
in Section 3 when we discuss which edges of $G \in \G$ can be
subdivided without creating a $K_{2,4}$ minor.  Up to isomorphism the class $\G$
contains one $4$-vertex graph and $2n-8$ $n$-vertex graphs
for each $n \geq 5$.

 We now examine the effect of deleting or contracting a single edge of a
graph in $\G$.

 \begin{lemma}\label{lem:Gdeledge}
 Suppose $G = G^{(+)}_{n,r,s} \in \G$ and $e \in E(G)$.  The
following are equivalent.

 \hangitem (i) $G\edel e \in \G$.

 \hangitem (ii) $G\edel e$ is $3$-connected.

 \hangitem (iii) $G$ is not a wheel and either $e$ is a plus edge, or
$r+s=n-1$ and $e \in \{v_1 v_{n-r}, v_n v_{1+s}\}$.
 \end{lemma}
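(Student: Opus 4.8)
The key preliminary observation is that deleting an edge never creates a minor, so $G \edel e$ is automatically $K_{2,4}$-minor-free; hence $G \edel e \in \G$ is equivalent to $G \edel e$ being $3$-connected and (isomorphic to) some $G^{(+)}_{n',r',s'}$, and in particular (i)$\implies$(ii) is immediate. I would prove the cycle (i)$\implies$(ii)$\implies$(iii)$\implies$(i), letting Lemmas~\ref{lem:G3conn} and~\ref{lem:GK24} do the bookkeeping once each candidate is written in the form $G^{(+)}_{n,r,s}$; the cases $n \le 5$ are small enough to dispatch by hand.

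For (iii)$\implies$(i) I would exhibit $G \edel e$ explicitly as a member of $\G$. If $e$ is the plus edge then $G\edel e = G_{n,r,s}$, which has $r,s \ge 2$ and $r+s \ge n-2$, hence is $3$-connected by Lemma~\ref{lem:G3conn}(ii) and $K_{2,4}$-minor-free by Lemma~\ref{lem:GK24}. If instead $r+s=n-1$ and $e=v_1 v_{n-r}$, then deleting $e$ removes the chord of $v_1$ of smallest index and produces $G^{(+)}_{n,r-1,s}$ with $(r-1)+s = n-2$; for $r \ge 3$ this again satisfies Lemma~\ref{lem:G3conn}(ii), and when $r=2$ with the plus edge present it is the wheel $G^{+}_{n,1,n-3}$. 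Either way it lies in $\G$, and the edge $v_n v_{1+s}$ is handled symmetrically using the involution $\rho$ of Section~\ref{ss:classG}.

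The substance is (ii)$\implies$(iii), which I would establish contrapositively by a degree-and-cut count. If $G$ is a wheel, every edge is a spoke or a rim edge and its deletion leaves a vertex of degree $2$, so $G\edel e$ is never $3$-connected; thus $3$-connectivity of $G\edel e$ forces $G$ to be a non-wheel, so $r,s \ge 2$ and $r+s \in \{n-2,n-1\}$. For these graphs the decisive computation is that an interior vertex $v_i$ (with $2 \le i \le n-1$) is adjacent to $v_1$ exactly when $i \ge n-r$ and to $v_n$ exactly when $i \le s+1$; hence every interior vertex has degree $3$, the only exception being the single vertex $v_{n-r}=v_{s+1}$ of degree $4$ that occurs when $r+s=n-1$. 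It follows that deleting any spine edge sends one of its interior endpoints to degree $2$, and deleting any chord other than $v_1 v_{n-r}$ or $v_n v_{1+s}$ sends its interior endpoint to degree $2$; in every such case a $2$-cut appears. Moreover deleting the chord $v_1 v_{n-r}$ keeps $v_{n-r}$ at degree $\ge 3$ only when $v_{n-r}$ also carries the $v_n$-chord, i.e.\ only when $r+s=n-1$. Putting these together, if $G\edel e$ is $3$-connected and $e$ is not the plus edge, then necessarily $r+s=n-1$ and $e \in \{v_1 v_{n-r},\, v_n v_{1+s}\}$, which is exactly~(iii).

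The point demanding the most care is the boundary behaviour of the extreme chords when the deficient side carries only two chords. Deleting $v_1 v_{n-r}$ lowers the degree of $v_1$ itself from $r+1$ (or $r+2$ when the plus edge is present) to $r$ (or $r+1$), so $v_1$ survives with degree $\ge 3$ precisely when $r \ge 3$ or the plus edge is present; the residual case $r=2$ with no plus edge, and symmetrically $s=2$, is where the literal edge list in~(iii) must be read against the alternative representations of $G$ furnished by $\sigma$, $\sigma'$ and $\rho$ in Section~\ref{ss:classG} (under which such a graph carries a plus edge and the genuinely removable edge is correctly pinned down). Reconciling the labels $v_1 v_{n-r}$ and $v_n v_{1+s}$ with the choice of representation is the main obstacle; once it is settled, the rest is the degree count above combined with Lemmas~\ref{lem:G3conn} and~\ref{lem:GK24}.
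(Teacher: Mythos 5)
Your overall strategy coincides with the paper's: its entire proof consists of declaring (iii)$\implies$(i)$\implies$(ii) to be clear, and then proving (ii)$\implies$(iii) contrapositively by observing that when (iii) fails, $G\edel e$ has a vertex of degree $2$ --- exactly your degree count, which you carry out correctly and in more detail than the paper does.

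The important point is the boundary case you flag in your last paragraph, and you should not treat it as a labeling technicality waiting to be ``reconciled'': it is a genuine counterexample to the lemma as literally stated, one that the paper's ``Clearly (iii)$\implies$(i)'' glosses over. Take $G = G_{6,2,3} \in \G$ (so $r=2$, $s=3$, $r+s=n-1$, no plus edge) and $e = v_1v_{n-r} = v_1v_4$. Condition (iii) holds, yet in $G \edel e$ the vertex $v_1$ retains only the neighbors $v_2$ and $v_5$, so both (i) and (ii) fail. No appeal to $\sigma$, $\sigma'$, $\rho$ can rescue the literal statement, because $3$-connectivity of $G\edel e$ is isomorphism-invariant: $G_{6,2,3}\edel v_1v_4$ is not $3$-connected under any relabeling. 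What $\sigma$ actually shows is that the unique removable non-spine edge of $G_{n,2,n-3}$ is $v_nv_{1+s}$, which corresponds to the plus edge of $G^+_{n,2,n-4}$; the problematic edge $v_1v_{n-r}$ corresponds to $v_1v_{n-r}$ of $G^+_{n,2,n-4}$, where $r+s = n-2$ and (iii) rightly excludes it. The correct repair is to amend (iii): for $e = v_1v_{n-r}$ one must additionally require $r \ge 3$ or the plus edge present, and symmetrically for $e = v_nv_{1+s}$ one must require $s \ge 3$ or the plus edge present. With that amendment, your identification of $G\edel e$ as an explicit member of $\G$ together with your degree count proves all the equivalences; and the directions the paper actually uses later (that if $G \in \G$ and $G \edel e$ is $3$-connected then $G \edel e \in \G$, as in Corollary~\ref{cor:minorclosed}) are exactly the ones unaffected by the error, since the faulty implication is only (iii)$\implies$(i).
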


 \begin{proof}
 Clearly (iii) $\implies$ (i) $\implies$ (ii).  If (iii) does not hold
then $G\edel e$ has at least one vertex of degree $2$, so (ii) does not hold;
thus (ii) $\implies$ (iii).
 \end{proof}

\begin{table}[h]
 \begin{center}
 \caption{\label{tbl:contract}:
	Contracting an edge $e$ in $G=G^{(+)}_{n,r,s}$ with $r, s \ge 2$}
 \begin{tabular}{|l|l|c|c|}
 \hline
 \vrule height13pt depth0pt width0pt 
 $e$ & $G \ctr e$ isomorphic to & $G\ctr e$ is $3$-conn.?
					& $G\ctr e \in \Gi$? \\
 \hline
 spine edges & & & \\
 \quad\llap{$*$ }$v_{1+s}v_{n-r}$, \quad $r+s=n-2$
		& $G^{(+)}_{n-1,r,s}$ & yes & yes \\[1pt]
 \quad $v_{n-i} v_{n-i+1}$, \quad $2 \le i \le r$, $r \ge 3$
		& $G^{(+)}_{n-1,r-1,s}$ & yes & yes \\[1pt]
 \quad $v_{n-2}v_{n-1}$, \quad $r=2$
		& $G^{(+)}_{n-1,1,n-4}$ & if plus edge &
			if plus edge \\[1pt]
 \quad\llap{$*$ }$v_{n-1} v_n$, \quad $r \ge 3$
		& $G^+_{n-1,r-1,s}$ & yes & yes \\[1pt]
 \quad\llap{$*$ }$v_{n-1} v_n$, \quad $r=2$
		& $G^+_{n-1,1,n-4} \isom W_{n-1}$ & yes & yes \\
 non-spine edges & & & \\
 \quad $v_1v_n$ (plus edge)
		& $K_1 + P_{n-2}$ & no & no \\[1pt]
 \quad $v_1v_{n-i}$, \quad $2 \le i \le r-1$
		& $G^{(+)}_{n-1,r-1,s} \edel v_{n-i-1}v_{n-i}$ & no & no \\[1pt]
 \quad $v_1v_{n-1}$, \quad $r \ge 3$
		& $G^+_{n-1,r-1,s} \edel v_{n-2}v_{n-1}$ & no & no \\
 \qquad or $r=2$ and $s=n-4$ & & & \\
 \quad $v_1v_{n-1}$, \quad $r = 2$ and $s=n-3$
		& $G^+_{n-1,1,n-4} \isom W_{n-1}$ & yes & yes \\[1pt]
 \quad $v_1 v_{n-r}$, \quad $r+s=n-2$
		& $G^{(+)}_{n-1,r,s} \edel v_{n-r-1}v_{n-r}$ & no & no \\[1pt]
 \quad $v_1 v_{n-r}$, \quad $r+s=n-1$, $s \ge 3$
		& $G^+_{n-1,r,s-1} \edel v_{n-r-1}v_{n-r}$ & no & no \\[1pt]
 \quad $v_1 v_{n-r}$, \quad $r+s=n-1$, $s = 2$
		& $G^+_{n-1,n-4,1} \edel v_2v_3$ & no & no \\[1pt]
 \hline
 \end{tabular}
 \end{center}
\end{table}

 Now consider contracting an edge $e$ of $G=G^{(+)}_{n,r,s}$.  If $n=4$
then $G=K_4$ and $G\ctr e \isom K_3$ for any edge $e$, so assume that $n
\ge 5$.  If $G$ is a wheel $W_n$ then we obtain $W_{n-1}$ if we contract
a rim edge, and $K_1 + P_{n-2}$ if we contract a spoke.  Therefore
assume $G$ is not a wheel, so $r, s \ge 2$.
 The effects of contracting edges in this case are shown in Table
\ref{tbl:contract}.  Here the superscript `${}^{(+)}$' means that the
plus edge is present in $G\ctr e$ precisely if it is present in $G$.
 Edges not included in the table are
covered by the symmetry $\rho$ that swaps $r$ and $s$, $v_i$ and $v_{n+1-i}$.
We may summarize the results as follows.

 \begin{lemma}\label{lem:Gcontredge}
 Suppose $G = G^{(+)}_{n,r,s} \in \G$ and $e \in E(G)$.

 \hangitem (i) If $n \ge 5$
then $G\ctr e$ is isomorphic to a graph in $\G$ with at most one edge
deleted.

 \hangitem (ii) If $n \ge 4$ then $G\ctr e \in \Gi$ if and only if $G\ctr e$
is $3$-connected.

 \hangitem (iii) If $G$ is a wheel with $n \ge 5$ then some $G \ctr e$
is isomorphic to $W_{n-1}$, and every $G \ctr e \in \Gi$ is isomorphic
to $W_{n-1}$.
 If $G$ is not a wheel then (from the starred entries in Table
\ref{tbl:contract}) some $G\ctr e$ is isomorphic to each of
 $G^+_{n-1,r-1,\min(s,n-4)}$,
 $G^+_{n-1,\min(r,n-4),s-1}$
 and, if $r+s=n-2$, also $G^{(+)}_{n-1,r,s}$;
 and any $G\ctr e \in \Gi$ is isomorphic to a spanning subgraph of one
of these.
 \end{lemma}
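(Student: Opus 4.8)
The plan is to prove Lemma~\ref{lem:Gcontredge} by reducing everything to the case analysis already tabulated in Table~\ref{tbl:contract}, since the wheel case and the $n=4$ case are handled by the remarks immediately preceding the lemma. The main work is bookkeeping: verifying that each of the three parts is an immediate consequence of the table together with the symmetry $\rho$ (which swaps $r \leftrightarrow s$ and $v_i \leftrightarrow v_{n+1-i}$, so that edges omitted from the table are covered).

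\begin{proof}
If $n = 4$ then $G = K_4 = W_4$ and $G \ctr e \isom K_3$ for every edge $e$; here $K_3$ is neither $3$-connected nor in $\Gi$, so (ii) holds vacuously, and (i) and (iii) impose no condition for $n = 4$. Thus assume $n \ge 5$. If $G$ is a wheel, the preceding discussion shows $G \ctr e \isom W_{n-1}$ when $e$ is a rim edge and $G \ctr e \isom K_1 + P_{n-2}$ when $e$ is a spoke; the latter has a degree-$2$ vertex and so is not $3$-connected and (being a $G^{(+)}_{n-1,1,\cdot}$ with too few edges, or directly) is not in $\Gi$. This proves (i) and (iii) for wheels and verifies (ii) for wheels. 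Now suppose $G$ is not a wheel, so $r, s \ge 2$ and Table~\ref{tbl:contract} applies.

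For part (i), I would simply read down the final two columns of Table~\ref{tbl:contract} (extended by $\rho$ to the omitted edges): every entry in the ``$G \ctr e$ isomorphic to'' column is of the form $G^{(+)}_{n-1,r',s'}$ or $G^{(+)}_{n-1,r',s'} \edel f$ for a single edge $f$ (or $W_{n-1} \isom G^+_{n-1,1,n-4}$). Since the relevant parameters satisfy $r' + s' \le n - 2$ whenever the plus edge could be present, each target base graph $G^{(+)}_{n-1,r',s'}$ is itself $K_{2,4}$-minor-free by Lemma~\ref{lem:GK24} and hence lies in $\Gi$ whenever it is $3$-connected; in all cases $G \ctr e$ is a graph in $\G$ with at most one edge deleted. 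This is exactly claim (i). For part (ii), the forward implication is trivial since every graph in $\Gi$ is $3$-connected by definition of $\G$. For the converse, I compare the ``$3$-connected?'' and ``$\in \Gi$?'' columns: in Table~\ref{tbl:contract} these two columns agree entry-for-entry (``yes''/``yes'', ``no''/``no'', or ``if plus edge''/``if plus edge''). Hence $G \ctr e$ is $3$-connected precisely when $G \ctr e \in \Gi$, which is (ii).

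For part (iii), I would extract the three named graphs directly from the starred rows. Contracting $v_{n-1}v_n$ gives $G^+_{n-1,r-1,s}$ when $r \ge 3$ and $G^+_{n-1,1,n-4} \isom W_{n-1}$ when $r = 2$; both cases are subsumed by writing $G^+_{n-1,r-1,\min(s,n-4)}$, since for $r=2$ we have $r-1=1$ and the second index collapses to $n-4$. Applying $\rho$ to this starred row yields $G^+_{n-1,\min(r,n-4),s-1}$, and the starred spine row $v_{1+s}v_{n-r}$ (valid exactly when $r+s = n-2$) yields $G^{(+)}_{n-1,r,s}$. That these are realized as \emph{some} $G \ctr e$ is the content of the starred entries; that \emph{any} $G \ctr e \in \Gi$ is a spanning subgraph of one of them follows because, by (ii), such a contraction is $3$-connected, so its table entry is a ``yes'' row, and inspection shows every ``yes'' row (and its $\rho$-image) produces a graph that is a spanning subgraph of one of these three. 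The main obstacle, such as it is, is purely organizational: ensuring the case collapse $r=2 \leftrightarrow r \ge 3$ is handled uniformly by the $\min$ notation and that the $\rho$-symmetry correctly accounts for every edge not listed explicitly in the table, so that no contraction is overlooked.
\end{proof}
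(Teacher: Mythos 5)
Your proposal is correct and takes essentially the same route as the paper: the paper offers no separate proof of this lemma, presenting it exactly as you do, as a summary of the case analysis in Table~\ref{tbl:contract} together with the preceding remarks handling $n=4$ and the wheel case, with edges omitted from the table covered by the symmetry $\rho$. Your extra bookkeeping (checking that the $\min(\cdot,n-4)$ notation collapses the $r=2$ and $r\ge 3$ rows, and that every ``yes'' row yields a spanning subgraph of one of the three starred targets) simply makes explicit what the paper leaves to the reader.
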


 Now we apply these results to the structure of minors of graphs in $\G$
or $\Gi$.

\begin{corollary}\label{lem:minorclosed}
 Every minor of a graph in $\Gi$ is a subgraph of some graph in $\Gi$.
 \end{corollary}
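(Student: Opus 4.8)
The plan is to introduce the class $\mathcal{C}$ consisting of all graphs that are (isomorphic to) subgraphs of some graph in $\Gi$. Note that $\mathcal{C}$ is closed under isomorphism, and $\Gi \subseteq \mathcal{C}$ since every graph is a subgraph of itself. The corollary is then equivalent to the statement that $\mathcal{C}$ is minor-closed: granting this, any minor of a graph in $\Gi \subseteq \mathcal{C}$ lies in $\mathcal{C}$, which says precisely that it is a subgraph of some graph in $\Gi$. Since, by the definition of minor used in the paper, the minor relation is the transitive closure of single edge deletions, deletions of isolated vertices, and single edge contractions, it suffices by induction on the number of operations to verify that $\mathcal{C}$ is closed under each of these three elementary operations.

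The two deletion operations are immediate: a subgraph of a subgraph of a graph in $\Gi$ is again a subgraph of that same graph, so if $H \in \mathcal{C}$ then $H\edel e \in \mathcal{C}$ and (deleting an isolated vertex) $H\vdel v \in \mathcal{C}$. The crux is contraction. Here I would first record the elementary fact that the subgraph relation is preserved under contraction: if $H \subseteq G'$ and $e \in E(H)$, then $H\ctr e \subseteq G'\ctr e$, because every vertex and edge of $H\ctr e$ is inherited from $H$ and hence from $G'$, and the parallel edges created by contracting $e$ collapse consistently in both graphs (we work with simple graphs throughout). Consequently, to show closure under contraction it is enough to establish that $G'\ctr e \in \mathcal{C}$ whenever $G' \in \Gi$, since then $H\ctr e \subseteq G'\ctr e$ with $G'\ctr e \in \mathcal{C}$ forces $H\ctr e \in \mathcal{C}$.

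To finish the contraction case, write $G' \isom G^{(+)}_{n,r,s} \in \G$. If $n=4$ then $G' \isom K_4$ and $G'\ctr e \isom K_3$, which is a subgraph of $K_4 \in \Gi$, so $G'\ctr e \in \mathcal{C}$. If $n \ge 5$, then Lemma~\ref{lem:Gcontredge}(i) gives that $G'\ctr e$ is isomorphic to a graph in $\G$ with at most one edge deleted; such a graph is a spanning subgraph of a graph in $\G \subseteq \Gi$, so again $G'\ctr e \in \mathcal{C}$. This completes the verification that $\mathcal{C}$ is closed under all three elementary minor operations, and hence that $\mathcal{C}$ is minor-closed.

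The only genuine content is the contraction step, and the substantive part of that has already been done in assembling Table~\ref{tbl:contract} and Lemma~\ref{lem:Gcontredge}. The remaining work is essentially bookkeeping: confirming that the preservation fact $H\ctr e \subseteq G'\ctr e$ holds at the level of simple graphs, checking that ``a graph in $\G$ with at most one edge deleted'' really is a subgraph of a graph in $\Gi$ (it is, being spanning), and handling the small base case $n=4$, where contraction leaves $\G$ altogether yet still lands inside $\mathcal{C}$. I expect no serious obstacle beyond marshalling these observations in the right order.
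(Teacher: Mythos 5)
Your proposal is correct and takes essentially the same approach as the paper: the paper's entire proof is the instruction to apply Lemma~\ref{lem:Gcontredge}(i) repeatedly to replace contractions by deletions, with details left to the reader. Your write-up simply supplies those details (the fact that contraction preserves the subgraph relation, the transitivity bookkeeping for the class $\mathcal{C}$, and the $n=4$ base case that Lemma~\ref{lem:Gcontredge}(i) does not cover), so it matches the intended argument.
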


 \begin{proof}
  Apply Lemma \ref{lem:Gcontredge}(i) repeatedly to replace contractions
by deletions (details are left to the reader).
 \end{proof}


 \begin{lemma}\label{lem:splitter}
 If a $3$-connected graph $H$ is a minor of a $3$-connected graph $G$,
then there is a sequence of $3$-connected graphs $G_0, G_1, \ldots, G_k$
where $G_0 \isom G$, $G_k \isom H$, and each $G_{i+1}$ is obtained from
$G_i$ by contraction or deletion of a single edge.
 \end{lemma}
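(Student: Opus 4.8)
The plan is to derive this from Seymour's Splitter Theorem, handling separately the one situation in which its hypotheses are not automatically met. In its graphic form the Splitter Theorem states: if $H$ is a $3$-connected minor of a $3$-connected graph $G$ with $|V(H)| \ge 4$, and if $H$ is a wheel then $G$ has no larger wheel as a minor, then there is a sequence of $3$-connected graphs $G = G_0, G_1, \ldots, G_k = H$ in which each $G_{i+1}$ is obtained from $G_i$ by deleting or contracting a single edge. (This is the ``downward'' phrasing; it is the reverse reading of the usual statement, which builds $G$ up from $H$.) For graphs the ``whirl'' alternative appearing in the matroid version does not arise, since whirls are not graphic; the translation between the two formulations uses the standard correspondence between vertex-$3$-connectivity of a graph on at least four vertices and Tutte $3$-connectivity of its cycle matroid, under which single-element deletions and contractions correspond to edge deletions and contractions. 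Since $H$ is $3$-connected we always have $|V(H)| \ge 4$, so the size hypothesis holds automatically and the only thing that can obstruct a direct application is the wheel condition.

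First I would dispose of the case in which $H$ is \emph{not} a wheel. Then the wheel hypothesis is vacuous and the Splitter Theorem applies verbatim, producing the required sequence of $3$-connected graphs from $G$ to $H$ (and if $G \isom H$ the sequence is trivial).

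It remains to treat $H \isom W_k$ for some $k \ge 4$. Here I would let $W_m$ be a wheel with the maximum number of vertices among all wheels that are minors of $G$; such an $m$ exists and satisfies $m \ge k$ because $W_k = H$ is itself a minor of $G$. By the maximality of $m$, the graph $G$ has no wheel minor larger than $W_m$, so the wheel hypothesis of the Splitter Theorem is satisfied when we apply it with the minor taken to be $W_m$. This yields a sequence of $3$-connected graphs descending from $G$ to $W_m$ by single edge deletions and contractions. To finish, I would append the tail $W_m, W_{m-1}, \ldots, W_k$, in which each $W_{j-1}$ is obtained from $W_j$ by contracting a single rim edge; every $W_j$ with $j \ge 4$ is $3$-connected and a rim contraction of $W_j$ is precisely $W_{j-1}$, so this is a valid continuation. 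Concatenating the two parts gives a sequence of $3$-connected graphs from $G$ to $W_k \isom H$ of the required form, and when $m = k$ the tail is empty so the Splitter Theorem already supplies the whole sequence.

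The main obstacle is exactly this wheel exception: the Splitter Theorem can fail to apply when $H$ is a wheel strictly smaller than some wheel minor of $G$, and the key idea is to close that gap by first reducing to a largest wheel minor $W_m$ (where maximality restores the hypothesis) and then descending $W_m \to W_{m-1} \to \cdots \to W_k$ through rim contractions. A secondary point requiring care is the passage between the matroid and graph versions, which relies on the equivalence between graph $3$-connectivity and cycle-matroid $3$-connectivity together with the matching of deletion and contraction operations.
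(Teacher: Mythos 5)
Your proof is correct and takes essentially the same route as the paper: both invoke Seymour's Splitter Theorem (which handles $H$ not a wheel, or $H$ the largest wheel minor of $G$) and then close the wheel gap by descending from a largest wheel minor $W_m$ to $H \isom W_k$ via successive rim-edge contractions, which keep the graphs $3$-connected under the simple-graph convention that parallel edges are suppressed. Your explicit attention to the matroid-to-graph translation (whirls, parallel edges) corresponds to the paper's remark that in Seymour's own setting a rim contraction creates parallel edges and destroys $3$-connectivity, which is exactly why the tail of rim contractions must be argued separately.
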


 \begin{proof}
 Seymour's Splitter Theorem \cite{seymour} as applied to graphs, or a
similar result of Negami \cite{negami}, says that our result is true if
$H$ is not a wheel, or if $H$ is the largest wheel minor of $G$.
 Seymour's operations and connectivity are defined for graphs with loops
and multiple edges, not simple graphs, which is why a sequence of minors
$G_0, G_1, \ldots, G_i$ cannot be continued to reduce a large wheel
minor $G_i$ to a smaller one.  In particular, contracting a rim edge of
a wheel in his definition yields a pair of parallel edges and so by his
definition the graph is not $3$-connected.  With our definition, where
we reduce parallel edges to a single edge after contraction, we can
contract a rim edge of a wheel $W_{\ell}$, $\ell \ge 5$, to obtain the
smaller wheel $W_{\ell-1}$, which is still $3$-connected.  Therefore, we
can continue the sequence of operations to also reach wheel minors $H$
that are not the largest wheel minor.
 \end{proof}

 \begin{corollary}\label{cor:minorclosed}
 If $H$ is a $3$-connected minor of
$G \in \Gi$ then $H \in \Gi$.
 \end{corollary}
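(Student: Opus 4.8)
The plan is to run an induction along the chain of single-edge reductions supplied by the Splitter Theorem. First I would observe that, by the definition of $\G$, every graph in $\Gi$ is $3$-connected; since $H$ is assumed $3$-connected as well, the hypotheses of Lemma~\ref{lem:splitter} are met for the pair $(G,H)$. That lemma then yields a sequence of $3$-connected graphs $G_0, G_1, \ldots, G_k$ with $G_0 \isom G$, $G_k \isom H$, in which each $G_{i+1}$ is obtained from $G_i$ by a single edge contraction or deletion, \emph{and every intermediate graph $G_i$ is itself $3$-connected}. This last point is what makes the argument work, because the membership criteria in Lemmas~\ref{lem:Gdeledge} and~\ref{lem:Gcontredge} are phrased in terms of the $3$-connectivity of the resulting graph.

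I would then prove by induction on $i$ that $G_i \in \Gi$ for all $0 \le i \le k$; the base case $G_0 \isom G \in \Gi$ is immediate. For the inductive step, suppose $G_i \in \Gi$, so $G_i \isom G'$ for some $G' = G^{(+)}_{n,r,s} \in \G$, and let $e$ be the edge of $G_i$ reduced to form $G_{i+1}$, with corresponding edge $e'$ in $G'$. If $G_{i+1} = G_i \edel e$, then $G' \edel e'$ is $3$-connected (being isomorphic to the $3$-connected graph $G_{i+1}$), so Lemma~\ref{lem:Gdeledge} (implication (ii) $\implies$ (i)) gives $G' \edel e' \in \G$, whence $G_{i+1} \isom G' \edel e' \in \Gi$. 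If instead $G_{i+1} = G_i \ctr e$, then $G' \ctr e'$ is $3$-connected, and since $n \ge 4$ for every member of $\G$, Lemma~\ref{lem:Gcontredge}(ii) gives $G' \ctr e' \in \Gi$, whence $G_{i+1} \isom G' \ctr e' \in \Gi$. In either case $G_{i+1} \in \Gi$, completing the induction, and $H \isom G_k \in \Gi$.

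The steps themselves are short; the only care needed is bookkeeping about isomorphism versus the labeled class. Lemmas~\ref{lem:Gdeledge} and~\ref{lem:Gcontredge} are stated for a labeled graph $G^{(+)}_{n,r,s} \in \G$, so at each step I must transfer the deletion or contraction through a fixed isomorphism $G' \to G_i$ and use that $3$-connectivity is isomorphism-invariant; the isomorphism-closure of $\Gi$ then absorbs the result. I do not expect any genuine obstacle: the entire weight of the corollary rests on Lemma~\ref{lem:splitter}, which guarantees that the reduction can be carried out \emph{entirely within} the $3$-connected graphs, together with the fact that Lemmas~\ref{lem:Gdeledge} and~\ref{lem:Gcontredge} already certify that a single $3$-connected deletion or contraction keeps us inside $\Gi$.
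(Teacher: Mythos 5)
Your proposal is correct and is essentially identical to the paper's own proof: both take the $3$-connected sequence from Lemma~\ref{lem:splitter} and induct along it using Lemma~\ref{lem:Gdeledge} and Lemma~\ref{lem:Gcontredge}(ii) to preserve membership in $\Gi$. The paper states this in two sentences; your version merely makes the induction and the isomorphism bookkeeping explicit.
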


 \begin{proof} Take the $3$-connected sequence $G \isom G_0, G_1,
\ldots, G_k \isom H$ given by Lemma \ref{lem:splitter}.  From Lemmas
\ref{lem:Gdeledge} and \ref{lem:Gcontredge}(ii), if $G_i \in \Gi$ then
$G_{i+1} \in \Gi$ also, and the result follows by induction.
 \end{proof}

\subsection{Small cases}
\label{ss:smallcases}

 Figure~\ref{fig:smallcasesnew} shows nine small graphs that are
$3$-connected (easily checked), not in $\G$ (also easily checked; all
but $D$ have a $K_{3,3}$ minor and so are nonplanar) and
$K_{2,4}$-minor-free.
 The first graph, $K_5$, is the only $3$-connected
graph on fewer than six vertices that is not in $\G$.
 To prove that the other eight graphs are $K_{2,4}$-minor-free we
examine the two maximal graphs $C^+$ and $D$, and show that the rest are
minors of $C^+$.

\begin{figure}[h]
\centering \scalebox{.7}{\includegraphics{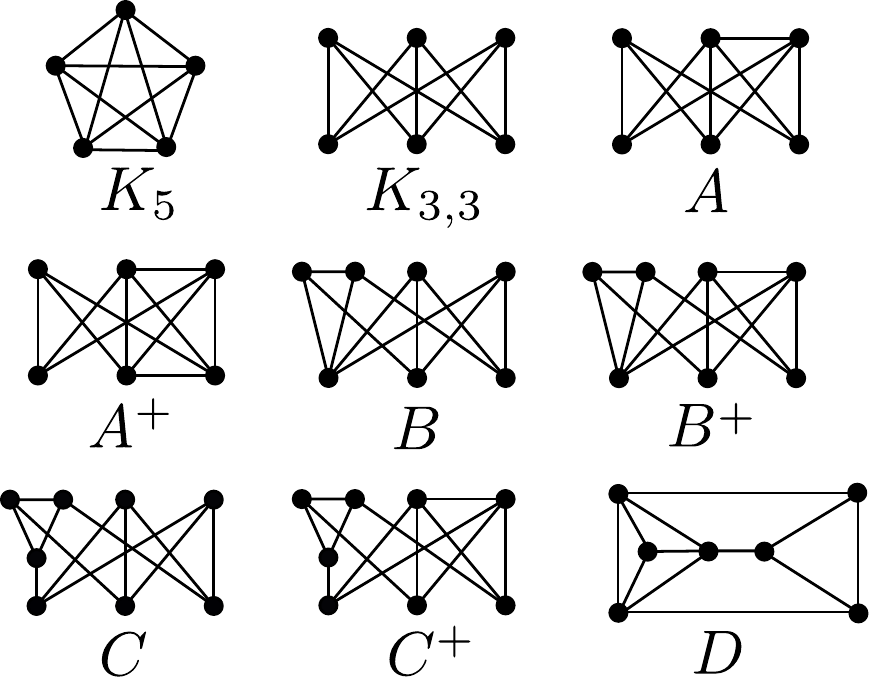}}
\caption{\label{fig:smallcasesnew}}
\end{figure}

\begin{lemma}  The graph $C^+$ is $K_{2,4}$-minor-free.
\label{lem:scC+}
\end{lemma}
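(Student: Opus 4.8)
The plan is to rule out a standard $K_{2,4}$ minor in $C^+$ directly. By the standardization observation preceding Lemma~\ref{lem:GK24}, $C^+$ has a $K_{2,4}$ minor if and only if it has a standard one $(\ptwo_1,\ptwo_2;S)$, consisting of two disjoint connected branch sets $\ptwo_1,\ptwo_2$ and four distinct vertices $S=\{s_1,s_2,s_3,s_4\}$, disjoint from $\ptwo_1\cup\ptwo_2$, with each $s_i$ having a neighbor in $\ptwo_1$ and a neighbor in $\ptwo_2$. Contracting $\ptwo_1$ and $\ptwo_2$ each to a single vertex, this says exactly that $C^+$ can be contracted so that two vertices acquire four common neighbors. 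Hence it suffices to prove the following finite statement: for every pair of disjoint connected vertex sets $\ptwo_1,\ptwo_2$ of $C^+$, at most three vertices outside $\ptwo_1\cup\ptwo_2$ are adjacent to both sets.

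Since $C^+$ has at most eight vertices, any standard $K_{2,4}$ minor satisfies $|\ptwo_1|+|\ptwo_2|\le |V(C^+)|-4\le 4$, so the branch sets are small and there are only a few ways to group vertices into the two connected sets $\ptwo_1,\ptwo_2$. To prune the search I would first try to reuse the device from the proof of Lemma~\ref{lem:GK24}: deleting a single $s_i$ from a standard $K_{2,4}$ minor leaves a standard $K_{2,3}$ minor, and every $K_{2,4}$ minor contains a $K_{2,3}$ minor. Consequently, any vertex $v$ for which $C^+\vdel v$ has no $K_{2,3}$ minor (equivalently, is outerplanar) is forced into $\ptwo_1\cup\ptwo_2$ in every standard $K_{2,4}$ minor, since it must be used by the minor yet cannot be one of the $s_i$. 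Locating one or two such vertices from Figure~\ref{fig:smallcasesnew} fixes part of the branch-set structure and discards most partitions of $V(C^+)$; where $C^+$ is too highly connected for this (being nonplanar, it may admit no single apex to an outerplanar graph), I would simply fall back on the direct count.

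For the remaining configurations I would exploit the automorphisms of $C^+$ to collapse symmetric choices of the pair $(\ptwo_1,\ptwo_2)$, and for each representative count the vertices outside $\ptwo_1\cup\ptwo_2$ adjacent to both. In each case I expect to produce either a vertex that can reach only one of the two branch sets, or a cut of size at most two that prevents a prospective $s_i$ from joining both sides; either way at most three candidate $s_i$ survive, contradicting $|S|=4$.

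The one real difficulty is organizing the case analysis so that it is exhaustive but short. The subtlety is that the branch sets need not be singletons, so I must account for the handful of ways of partitioning up to four vertices into two connected sets rather than merely choosing two vertices to play the roles of $a_1,a_2$; this is where an incautious argument would miss cases. The three levers that keep the verification finite and manageable are the standardization (which lets me treat the four $s_i$ as single vertices), the $K_{2,3}$/outerplanarity exclusion of branch-set vertices, and the symmetry reduction. With these in place the residual work is routine adjacency checking in a graph on at most eight vertices.
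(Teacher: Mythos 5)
Your setup is sound as far as it goes: the reduction to standard minors, the bound $|R_1|+|R_2|\le 4$, and the use of symmetry are all legitimate, and they match the opening moves of the paper's proof. But the proposal has a genuine gap, in two parts. First, the pruning lever you plan to lean on --- vertices $v$ for which $C^+ - v$ is outerplanar, which would then be forced into the branch sets --- simply does not exist in $C^+$. Up to automorphism $C^+$ has only three vertex classes (represented by $v_1$, $v_4$, $v_6$ in the labeling of Figure~\ref{fig:smallcasesC+m}), and deleting any representative still leaves a $K_{2,3}$ minor: for $C^+ - v_4$ take branch sets $\{v_5\}$ and the triangle $\{v_1,v_2,v_3\}$ with $\{v_6,v_7,v_8\}$ as the other side; for $C^+ - v_1$ the sets $\{v_4,v_5\}$ and $\{v_6,v_7,v_8\}$ give a $K_{2,3}$ subgraph; for $C^+ - v_6$ contract the triangle $v_1v_2v_3$ to a single vertex $t$ and take branch sets $\{v_7\},\{v_8\}$ with $\{t,v_4,v_5\}$ as the other side. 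So this lever prunes nothing, which you half-anticipated --- but your stated fallback is then ``the direct count,'' i.e.\ the entire verification.

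Second, and decisively, that direct count is never carried out: you say you \emph{expect} each configuration to yield a vertex reaching only one branch set or a small cut, but verifying that expectation is precisely the content of the lemma. The paper's proof supplies the two short arguments that do this work, and your proposal has neither. (a) A singleton branch set must be a vertex of degree at least $4$, hence $v_4$ or $v_5$; then $S$ is forced to be its whole neighborhood, leaving $R_2 \subseteq \{v_1,v_2,v_3\}$, none of which is adjacent to $v_5$ --- so in fact $|R_1|=|R_2|=2$ exactly, a much stronger statement than $|R_1|+|R_2|\le 4$. (b) A two-vertex branch set cannot lie inside a triangle whose vertex set has only three outside neighbors, since then $S$ is forced and the third triangle vertex is cut off from $R_2$; this eliminates every edge of $C^+$ except the three matching edges $v_3v_6$, $v_2v_8$, $v_1v_7$, and the lone surviving case dies because the complementary pair $\{v_7,v_8\}$ is not even adjacent. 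Without (a) and (b), or an explicit exhaustive enumeration replacing them, what you have is a plan for a proof rather than a proof.
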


\begin{proof} Consider $C^+$ with vertices labeled as on the left in
Figure~\ref{fig:smallcasesC+m}.  Suppose there is a standard $K_{2,4}$ minor
$(\ptwo_1,\ptwo_2;S)$ in $C^+$ and suppose $|\ptwo_1|=1$. Then $\ptwo_1$ must be
either $v_4$ or $v_5$ since these are the only vertices of degree $4$.
Say, without loss of generality, $\ptwo_1 = \{v_4\}$. Then $S= \{v_5,
v_6,v_7,v_8\}$, and $\ptwo_2$ must be a subset of $\{v_1,v_2,v_3\}$. None
of these three vertices are adjacent to $v_5$, however, so we cannot
have $\ptwo_2$ adjacent to $v_5$ and thus we cannot have $|\ptwo_1| = 1$, or
symmetrically $|\ptwo_2| = 1$.  Thus $|\ptwo_1| \geq 2$ and $|\ptwo_2| \geq 2$
and since $|V(C^+)|=8$, $|\ptwo_1|=|\ptwo_2|=2$. 

Let $T$ be a triangle with a set $N$ of neighbors with $|N|=3$. Suppose $\ptwo_1 \subseteq V(T)$. Then we would have $N \subseteq S$ along with the third vertex $t$ of $T$, but $N$ separates $t$ from the rest of the graph so $\ptwo_2$ cannot be adjacent to $t$. Thus $\ptwo_1$ (or symmetrically $\ptwo_2$) cannot consist of two vertices in a triangle with only three neighbors. In $C^+$, we have the following triples of vertices which form such triangles: $\{v_1,v_2,v_3\}$, $\{v_4,v_5,v_6\}$, $\{v_4,v_5,v_7\}$, and $\{v_4,v_5,v_8\}$. The only remaining pairs of adjacent vertices that could make up $\ptwo_1$ or $\ptwo_2$ are $\{v_3,v_6\}$, $\{v_2,v_8\}$, and $\{v_1,v_7\}$ where all three cases are symmetric. If $\ptwo_1=\{v_3,v_6\}$, then $\ptwo_2$ must be $\{v_7,v_8\}$ but this set is not an option for $\ptwo_2$. \end{proof}

\begin{figure}[h]
	\centering \scalebox{.8}{\includegraphics{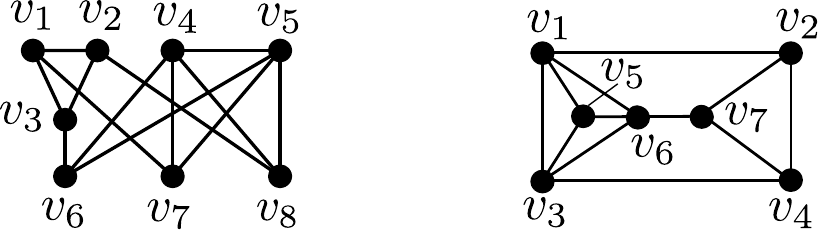}}
	\captionof{figure}{}
	\label{fig:smallcasesC+m} 
\end{figure}  

\begin{lemma}  The graph $D$ is $K_{2,4}$-minor-free.
\label{lem:scD}
\end{lemma}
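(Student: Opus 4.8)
The plan is to argue by contradiction in the same style as the proof of Lemma~\ref{lem:scC+}, exploiting the fact that $D$ has at most eight vertices to force the two branch sets of a hypothetical $K_{2,4}$ minor to be very small. Suppose $D$ contains a standard $K_{2,4}$ minor $(R_1,R_2;S)$ with $S=\{s_1,s_2,s_3,s_4\}$. Since $R_1$, $R_2$ and the four singletons of $S$ are pairwise disjoint, $|R_1|+|R_2|\le |V(D)|-4$. When $|V(D)|=8$ this gives $|R_1|+|R_2|\le 4$, and I would first rule out $|R_1|=1$ so that the only surviving possibility is $|R_1|=|R_2|=2$; if $D$ has fewer vertices the same counting leaves even less room and is dispatched by the same two cases.

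Step one is to eliminate the case $|R_1|=1$ (equivalently, by symmetry, $|R_2|=1$). A single-vertex branch set $R_1=\{v\}$ must send an edge to each of the four vertices of $S$, so $\deg_D(v)\ge 4$. For each vertex $v$ of $D$ with $\deg_D(v)\ge 4$ I would take $S$ to consist of four neighbours of $v$ and check that the remaining vertices, which must contain $R_2$, cannot all be joined to every $s_i$; concretely one exhibits some $s_i$ whose only neighbours lie in $R_1\cup S$, exactly as $v_5$ fails to be adjacent to $\{v_1,v_2,v_3\}$ in the argument for $C^+$. This leaves $|R_1|=|R_2|=2$, so each branch set is a single edge and the four vertices of $S$ are precisely the remaining vertices of $D$, each required to have a neighbour in $R_1$ and a neighbour in $R_2$.

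Step two handles the pairs. The key reusable tool is the separation observation already used for $C^+$: if $\{u,w\}$ is an edge lying in a triangle $uwt$ whose third vertex $t$ has no neighbour outside $\{u,w\}$ together with the opposite branch set, then $\{u,w\}$ cannot be a branch set, since $t$ would be cut off from both $S$ and the other branch set. I would list the edges of $D$, discard those trapped inside such triangles, and for each remaining candidate edge $R_1$ test whether a disjoint edge $R_2$ can be chosen so that all four leftover vertices are simultaneously adjacent to $R_1$ and to $R_2$. The symmetries of $D$ cut the number of genuinely distinct pairs down to a handful, and each surviving candidate is eliminated by producing a leftover vertex with no neighbour in $R_1$, or none in $R_2$, contradicting that it is an $s_i$.

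The main obstacle is purely the bookkeeping of Step two: without the triangle-separation shortcut and the automorphisms of $D$, the number of (disjoint) edge pairs to test is large and it is easy to overlook a configuration. I therefore expect the proof to hinge on first pinning down the automorphism group of $D$ and its triangles, so that the analysis reduces to checking one representative edge from each orbit; the degree computation of Step one is routine by comparison.
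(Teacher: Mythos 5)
Your plan is sound and, once executed on the actual graph $D$, it does work --- but it is not the paper's argument, and one of your premises is off in a way that actually makes your job easier. The graph $D$ has $7$ vertices, not $8$ (it is the $3$-sum of $K_4$ and the triangular prism along a common triangle, with all three triangle edges kept), so in a standard model $(R_1,R_2;S)$ the count gives $|R_1|+|R_2|\le 3$: counting alone forces some branch set to be a singleton, and your Step two about $|R_1|=|R_2|=2$ never arises. Step one then finishes everything: the only vertices of degree at least $4$ are the three common-triangle vertices, each of degree exactly $4$, so $S=N(v)$ is forced; the apex vertex of the $K_4$ lies in $N(v)$ and has all of its neighbours inside $R_1\cup S$, so it cannot reach $R_2$ --- precisely the failure you predicted. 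By contrast, the paper's proof of Lemma~\ref{lem:scD} is a different, more mechanical reduction: since $|V(D)|=7$, a model of the $6$-vertex graph $K_{2,4}$ can have at most one branch set of size two, so it suffices to check that neither $D$ nor any $D\ctr e$ contains $K_{2,4}$ as a subgraph --- thirteen routine subgraph checks. Your approach, which transplants the standard-model and degree analysis from the paper's proof of Lemma~\ref{lem:scC+} for $C^+$, buys a shorter, more structural argument (three symmetric cases instead of thirteen graphs, with the symmetry of $D$ reducing it to one); the paper's version buys verifiability with essentially no cleverness, which is presumably why it was stated in one line. The only caution for your write-up: pin down $|V(D)|=7$ at the outset rather than hedging at eight, since which cases survive the counting step depends on it.
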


\begin{proof}
 It is easy to check that $D$ has no subgraph isomorphic to $K_{2,4}$,
nor does $D\ctr e$ for any $e \in E(D)$.
 Hence $D$ is $K_{2,4}$-minor-free since $|V(D)| = 7$. \end{proof}

\begin{figure}[h]
 \centering
  \scalebox{.7}{\includegraphics{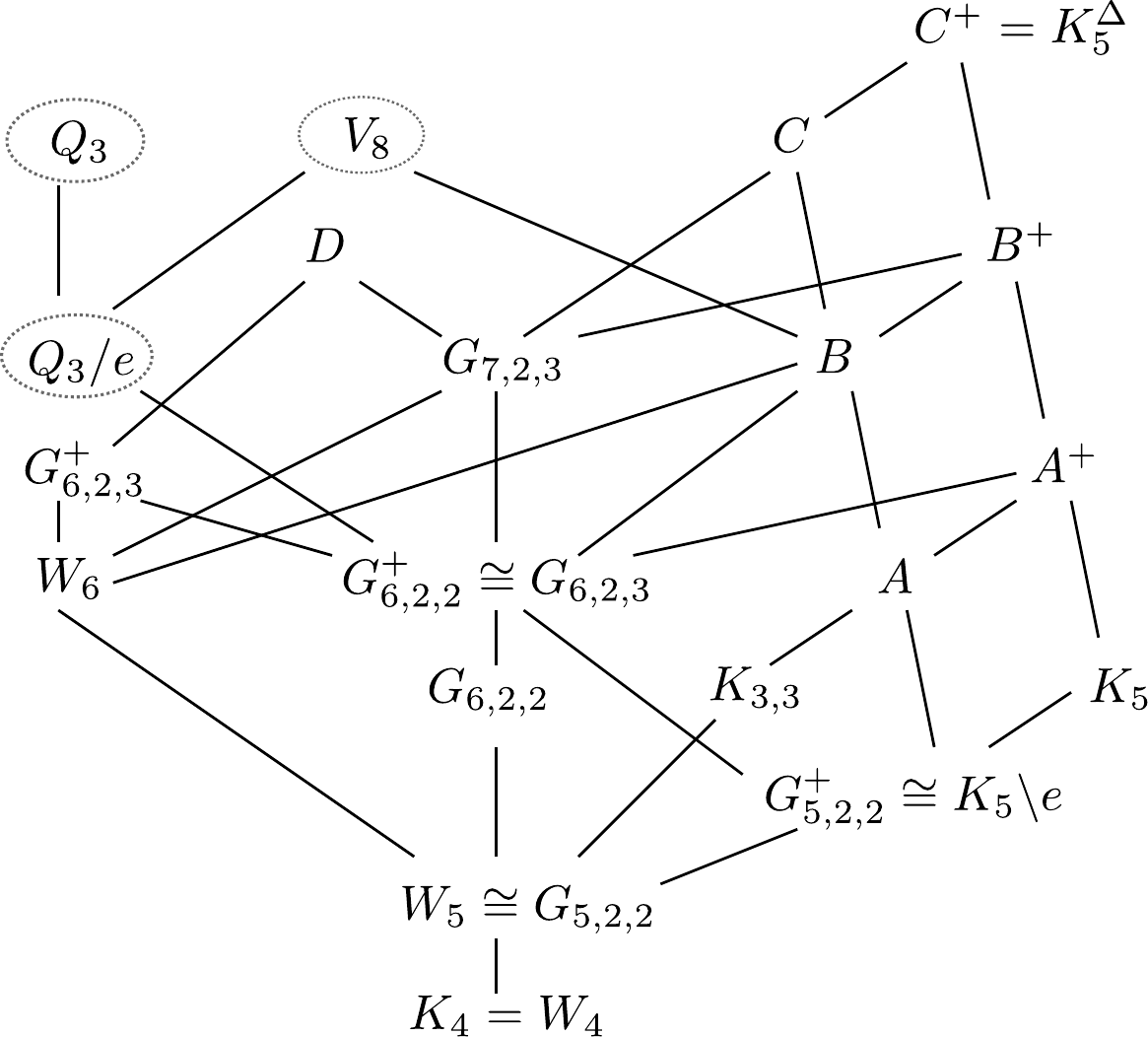}}
  \caption{: $Q_3$, $V_8$, $C^+=K_5^{\Delta}$, $D$, and their
	$3$-connected minors
 \label{fig:minordiagram}}
\end{figure} 

 Figure \ref{fig:minordiagram} shows what we will prove is the Hasse
diagram for the minor ordering of all $3$-connected minors of $C^+$
(also labeled $K_5^\Delta$, following Ding and Liu \cite{ding}) and $D$.
 For future reference the figure also includes three additional, circled
graphs $Q_3$ (the cube), $Q_3/e$ (contract any edge of $Q_3$) and $V_8$
(the $8$-vertex twisted cube or M\"{o}bius ladder).
 Unlike the other graphs, these three have $K_{2,4}$ minors, as shown by
the minor in $Q_3/e$ on the right in Figure \ref{fig:small_cases_B+m}.
 Here, and later, a $K_{2,4}$ minor is indicated by two groups of
vertices circled by dotted curves representing the two vertices in one
part of the bipartition of $K_{2,4}$, and four triangular vertices
representing the other part.

\begin{figure}[h]
\centering \scalebox{.7}{\includegraphics{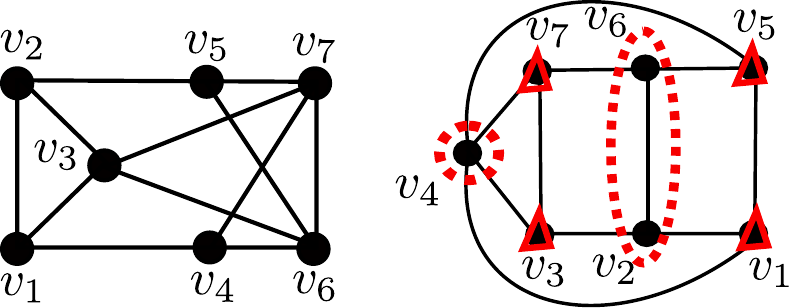}}
\captionof{figure}{}
\label{fig:small_cases_B+m}
\end{figure}

\begin{lemma}
 Figure~\ref{fig:minordiagram} is the Hasse diagram for all
$3$-connected minors (up to isomorphism) of $C^+$, $D$, $Q_3$ and $V_8$.
 \label{lem:allminors}
\end{lemma}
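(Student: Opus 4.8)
The proof is a finite verification whose engine is the Splitter Theorem in the form of Lemma~\ref{lem:splitter}. The plan is to show that the collection of graphs appearing as nodes of Figure~\ref{fig:minordiagram} is closed under the operation of taking a single \emph{$3$-connected edge reduction}, that is, an edge deletion $G\edel e$ or contraction $G\ctr e$ that happens to remain $3$-connected, and that the edges of the diagram record exactly these single-step reductions. Since $K_4$ is the unique smallest $3$-connected graph, and by Lemma~\ref{lem:splitter} every $3$-connected minor of a $3$-connected graph $G$ is reachable from $G$ by a sequence of such single-step reductions, closure of the node set together with correctness of the drawn covering relations will at once yield that the diagram lists (up to isomorphism) all $3$-connected minors of $C^+$, $D$, $Q_3$ and $V_8$, and that it is their Hasse diagram.

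First I would work top-down from each of the four maximal graphs. For a fixed node $G$ I consider every $G\edel e$ and every $G\ctr e$, discard those that fail to be $3$-connected, and identify the isomorphism type of each survivor, remembering the convention that contraction reduces any parallel edges to a single edge. To control the casework I would compute $\mathrm{Aut}(G)$ and reduce to one representative edge per orbit; this is especially effective for the highly symmetric graphs $Q_3$ and $V_8$, and for $C^+$, whose labeling in the proof of Lemma~\ref{lem:scC+} already exposes the relevant symmetries. For each representative I record the resulting graph. For example, contracting an edge of $Q_3$ produces the circled node $Q_3\ctr e$, and I would check that every other reduction of a node lands, up to isomorphism, on a node drawn immediately beneath it, so that nothing new escapes the diagram.

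With the one-step reductions tabulated for every node, the two verifications become bookkeeping. Completeness follows because the node set is closed under $3$-connected single-edge reductions: by Lemma~\ref{lem:splitter} any $3$-connected minor would be reached by iterating these reductions and hence is already present. Correctness of the covers follows because each drawn edge corresponds to a reduction that strictly decreases the vertex count (contraction) or the edge count (deletion), so in the simple-graph setting no proper $3$-connected minor can lie strictly between a graph and one of its single-edge reductions; conversely the enumeration exhibits every downward cover, so none is omitted. Realizability of each node as an actual minor of one of the four top graphs is then immediate from the downward chains.

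The main obstacle is not conceptual but the volume of bookkeeping: correctly naming the isomorphism type of each of the many $G\edel e$ and $G\ctr e$, and being certain the orbit reduction has neither merged distinct cases nor skipped one. The subtlest points are the contractions, where collapsing parallel edges can unexpectedly drop the vertex count or destroy $3$-connectivity, and the requirement that graphs reached along different branches, such as a deletion from one node and a contraction from another, genuinely be isomorphic so that they are correctly identified as a single node of Figure~\ref{fig:minordiagram}.
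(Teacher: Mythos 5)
Your strategy coincides with the paper's: both use Lemma~\ref{lem:splitter} to reduce the problem to a top-down, symmetry-reduced enumeration of single-edge deletions and contractions, discarding any reduction that loses $3$-connectivity, and both conclude completeness of the node set from its closure under these reductions. That part of your proposal is sound; the only cosmetic difference is that the paper avoids re-enumerating reductions for the nodes that lie in $\G$ by citing Lemmas~\ref{lem:Gdeledge} and~\ref{lem:Gcontredge}(iii), and handles the exceptional graphs ($K_{3,3}$, $A$, $A^+$, $B$, $B^+$, $C$, $C^+$, $D$, $Q_3\ctr e$, $Q_3$, $V_8$) exactly as you propose, one edge orbit at a time.

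There is, however, a genuine flaw in your treatment of the Hasse-diagram structure. You assert that no proper $3$-connected minor can lie strictly between a graph and one of its single-edge reductions, so that the diagram's edges are exactly the single-step reductions. For deletions this is correct, since $|V|+|E|$ drops by exactly one; but for contractions it is false, and it fails inside the very diagram you are verifying: $K_5\ctr e \isom K_4$ is a $3$-connected single-edge contraction, yet $K_4$ is a proper minor of $W_5$, which is a proper minor of $K_5\edel e$, which is a proper minor of $K_5$, and all four graphs are $3$-connected nodes of Figure~\ref{fig:minordiagram}. Hence the reduction from $K_5$ to $K_4$ is not a covering relation, and a diagram with an edge for every single-step reduction would not be a Hasse diagram. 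The source of the error is that a contraction decreases $|V|+|E|$ by at least two (more when parallel edges collapse), which is precisely what leaves room for intermediate minors. The repair is routine bookkeeping rather than a new idea: by Lemma~\ref{lem:splitter}, the minor order on the (finite, now known) node set is the reflexive-transitive closure of your tabulated single-step reductions, so compute that order from the table and then read off its covering pairs; those pairs, not the raw reduction steps, are the edges of the Hasse diagram.
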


\begin{proof}
 By Lemma \ref{lem:splitter}, we can proceed by single edge deletions
and contractions, and we do not need to consider further minors once we
reach a graph that is not $3$-connected.
 The figure is clearly correct for the $3$-connected graphs on four or
five vertices, so we consider only graphs with at least six vertices.
 Also, the $3$-connected minors for graphs in $\G$ follow from Lemmas
\ref{lem:Gdeledge} and \ref{lem:Gcontredge}(iii), so we consider only
graphs not in $\G$.

 In what follows results of all deletions or contractions are identified
only up to isomorphism.  When we lose $3$-connectivity, in all but one
case there will be at least one vertex of degree $2$.  We work upwards
in the figure.

 For the graphs $K_{3,3}$, $A$ and $A^+$
label the vertices consecutively along the top row then the bottom row
in Figure \ref{fig:smallcasesnew}.
 For $K_{3,3}$, deleting any edge loses $3$-connectivity; contracting
any edge results in $W_5$.  For $A$, deleting $v_2v_3$ yields $K_{3,3}$,
and deleting any other edge loses $3$-connectivity.  Contracting an edge
incident with $v_1$ yields $K_5\edel e$, contracting $v_2 v_3$ loses
$3$-connectivity, and contracting any other edge incident with $v_2$ or
$v_3$ yields $W_5$.  For $A^+$, all edges are equivalent up to symmetry
to one of $v_1v_4$, $v_1v_5$, $v_2v_3$ or $v_2v_5$.  Deleting $v_1v_4$
or $v_1v_5$ loses $3$-connectivity, deleting $v_2 v_3$ gives $A$, and
deleting $v_2v_5$ gives $G_{6,2,3}$.  Contracting $v_1v_4$ gives $K_5$,
contracting $v_1v_5$ gives $K_5\edel e$, contracting $v_2 v_3$ loses
$3$-connectivity, and contracting $v_2 v_5$ yields $W_5$.

 For $B$ and $B^+$ we redraw $B^+$ as on the left in
Figure~\ref{fig:small_cases_B+m} and take $B = B^+ \edel v_6v_7$.
 For $B$, deleting any edge loses $3$-connectivity. Up to symmetry,
there are five edge contractions to consider:
 $v_1v_2$, $v_1v_3$, $v_1v_4$, $v_3v_6$ and $v_4v_6$.
 Contracting $v_1v_2$ yields $K_{3,3}$,
 contracting $v_1v_3$ loses $3$-connectivity, 
 contracting $v_1v_4$ yields $A$,
 contracting $v_3v_6$ results in $W_6$, and
 contracting $v_4v_6$ gives $G_{6,2,3}$.
 For $B^+$, all edges are equivalent up to symmetry to six possibilities:
 $v_1v_2$, $v_1v_3$, $v_1v_4$, $v_3v_6$, $v_4v_6$ and $v_6v_7$.
 Deleting $v_1 v_2$, $v_1 v_3$, $v_1 v_4$ or $v_4 v_6$ loses
$3$-connectivity,
 deleting $v_3v_6$ yields $G_{7,2,3}$, and
 deleting $v_6 v_7$ results in $B$.
 Contracting $v_1v_3$ or $v_6v_7$ loses $3$-connectivity,
 contracting $v_1 v_2$ results in $A$,
 contracting $v_1 v_4$ yields $A^+$,
 contracting $v_3 v_6$ gives $W_6$, and
 contracting $v_4 v_6$ gives $G_{6,2,3}$.

 For $C^+$ we label the vertices as on the left in Figure
\ref{fig:smallcasesC+m} and take $C = C^+ \edel v_4v_5$.
 Deleting any edge of $C$ loses $3$-connectivity.
 Up to symmetry, there are three edge contractions to consider:
$v_1v_2$, $v_1v_7$ and $v_4v_6$.
 Contracting $v_1v_2$ loses $3$-connectivity,
 contracting $v_1v_7$ results in $B$, and
 contracting $v_4v_6$ yields $G_{7,2,3}$.
 For $C^+$, deleting $v_4v_5$ yields $C$ and deleting any other edge
loses $3$-connectivity.
 Up to symmetry, there are four edge contractions of $C^+$ to consider:
$v_1v_2$, $v_1 v_7$, $v_4 v_5$ and $v_4 v_6$.
 Contracting $v_1 v_2$ or $v_4 v_5$ loses $3$-connectivity,
 contracting $v_1 v_7$ results in $B^+$, and
 contracting $v_4 v_6$ gives $G_{7,2,3}$.

 We label $D$ as on the right in
Figure~\ref{fig:smallcasesC+m}.
 Up to symmetry all edges are equivalent to one of four edges: $v_1
v_3$, $v_2v_4$, $v_5 v_6$ and $v_6 v_7$.
 Deleting $v_1 v_3$ results in $G_{7,2,3}$, and deleting any of the
other three edges loses $3$-connectivity.
 Contracting $v_1 v_3$ or $v_2 v_4$ loses $3$-connectivity,
 contracting $v_5 v_6$ yields the triangular prism $G_{6,2,2}$, and
 contracting $v_6 v_7$ results in $G^+_{6,2,3}$.

 Finally, consider $Q_3/e$, $Q_3$ and $V_8$.
 Label $Q_3/e$ as shown on the right in Figure~\ref{fig:small_cases_B+m}.
 Every edge in $Q_3/e$ is adjacent to a degree $3$ vertex so
deleting any edge loses $3$-connectivity.
 Up to symmetry, there are four edge contractions to consider:
$v_1v_2$, $v_3v_4$, $v_2v_6$ and $v_3v_7$.
 Contracting $v_3v_4$ loses $3$-connectivity, and contracting $v_2 v_6$
also loses $3$-connectivity (without creating a vertex of degree $2$).
 Contracting $v_1v_2$ results in $G_{6,2,3}$, and
 contracting $v_3v_7$ yields $G_{6,2,2}$.
 In the cube $Q_3$ all edges are symmetric; deleting any edge loses
$3$-connectivity, and contracting any edge yields $Q_3/e$.
 We may take $V_8$ to be $C_8 = (v_1 v_2 \ldots
v_8)$ with added diagonals $v_i v_{i+4}$ for $1 \le i \le 4$.  Deleting
any edge loses $3$-connectivity, contracting a $C_8$ edge results in
$B$, and contracting a diagonal yields $Q_3/e$.
 \end{proof}
 
Considering the minors of $C^+$, we obtain the following.

\begin{corollary} 
 The graphs $C$, $B^+$, $B$, $A^+$, $A$, and $K_{3,3}$ are
$K_{2,4}$-minor-free.  
 \label{cor:minorsC+}
 \end{corollary}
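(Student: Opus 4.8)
The plan is to leverage two things established earlier: Lemma~\ref{lem:scC+}, which says $C^+$ is $K_{2,4}$-minor-free, and Lemma~\ref{lem:allminors}, which identifies all $3$-connected minors of $C^+$ via the Hasse diagram in Figure~\ref{fig:minordiagram}. The entire argument rests on the fact that being $K_{2,4}$-minor-free is inherited by minors. First I would record the transitivity of the minor relation: if $H$ is a minor of $G$ and $G$ is a minor of $F$, then $H$ is a minor of $F$. Contrapositively, if $F$ is $K_{2,4}$-minor-free and $H$ is a minor of $F$, then $H$ is $K_{2,4}$-minor-free as well, since a $K_{2,4}$ model in $H$ would, after composing with the contractions and deletions that realize $H$ as a minor of $F$, yield a $K_{2,4}$ model in $F$.

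Next I would verify that each of the six graphs $C$, $B^+$, $B$, $A^+$, $A$, and $K_{3,3}$ is in fact a minor of $C^+$. This is exactly what the proof of Lemma~\ref{lem:allminors} has already supplied, reading off the covering relations in Figure~\ref{fig:minordiagram}: we have $C = C^+\edel v_4v_5$, contracting $v_1v_7$ in $C^+$ gives $B^+$, deleting $v_6v_7$ in $B^+$ gives $B$, contracting $v_1v_4$ in $B^+$ gives $A^+$, deleting $v_2v_3$ in $A^+$ gives $A$, and deleting $v_2v_3$ in $A$ gives $K_{3,3}$. Chaining these together through transitivity shows that all six graphs lie below $C^+$ in the minor ordering. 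I would emphasize that no new casework is required here, since these minor relationships are precisely the edges of the Hasse diagram verified in Lemma~\ref{lem:allminors}.

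Finally, combining the two observations gives the result immediately: each of $C$, $B^+$, $B$, $A^+$, $A$, and $K_{3,3}$ is a minor of the $K_{2,4}$-minor-free graph $C^+$, hence is itself $K_{2,4}$-minor-free. In truth there is no real obstacle in this corollary; the substantive work was done in Lemma~\ref{lem:scC+} (checking $C^+$ directly) and in Lemma~\ref{lem:allminors} (enumerating the minors). The only point demanding any attention is ensuring that the six named graphs genuinely appear as minors of $C^+$ rather than merely of $D$, $Q_3$, or $V_8$, and this is confirmed by the explicit reductions above all originating at $C^+$.
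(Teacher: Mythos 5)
Your proposal is correct and is essentially the paper's own argument: the paper's proof consists of the single remark ``Considering the minors of $C^+$, we obtain the following,'' relying exactly as you do on Lemma~\ref{lem:scC+} together with the minor relationships established in Lemma~\ref{lem:allminors} and the fact that $K_{2,4}$-minor-freeness is inherited by minors. Your explicit chain of deletions and contractions ($C = C^+\edel v_4v_5$, $B^+ = C^+/v_1v_7$, etc.) matches the covering relations verified in the proof of Lemma~\ref{lem:allminors}, so nothing is missing.
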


\subsection{Characterization of $3$-connected graphs}

 \begin{theorem} Let $G$ be a $3$-connected graph. Then $G$ is
$K_{2,4}$-minor-free if and only if $G \in \Gi$ or $G$ is isomorphic to
one of the nine small exceptions shown in
Figure~\ref{fig:smallcasesnew}.
 \label{thm:main24}
 \end{theorem}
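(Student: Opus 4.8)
The backward direction is essentially already done. Every graph in $\Gi$ is $K_{2,4}$-minor-free by Lemma~\ref{lem:GK24}, together with the direct verifications for the three members of $\G$ on fewer than six vertices, and Corollary~\ref{lem:minorclosed} shows the class is closed under taking minors in the relevant sense. The nine small exceptions are $K_{2,4}$-minor-free by Lemma~\ref{lem:scC+}, Lemma~\ref{lem:scD}, and Corollary~\ref{cor:minorsC+}, while $K_5$ has too few vertices to contain $K_{2,4}$. So the content is the forward direction: a $3$-connected $K_{2,4}$-minor-free $G$ lies in $\Gi$ or is one of the nine exceptions.

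I would prove the forward direction by induction on $n=|V(G)|$, using the Splitter Theorem (Lemma~\ref{lem:splitter}) as the engine. For the base case, the $3$-connected graphs on at most five vertices are $K_4$, $\wheelfive$, $K_5\edel e$ (all in $\Gi$) and $K_5$ (an exception), so the statement holds. For the inductive step, take $G$ to be $3$-connected and $K_{2,4}$-minor-free with $n\geq 6$. By Lemma~\ref{lem:splitter} (or the classical fact that every $3$-connected graph on at least five vertices has a contractible edge) there is an edge $e$ with $G'=G\ctr e$ again $3$-connected, now on $n-1$ vertices; as a minor of $G$ it is $K_{2,4}$-minor-free, so by the induction hypothesis $G'\in\Gi$ or $G'$ is one of the nine exceptions. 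Since $G\ctr e=G'$, the graph $G$ is recovered from $G'$ by splitting the contracted vertex $w$ into two adjacent vertices $u,v$, where each neighbor of $w$ in $G'$ becomes adjacent to one or to both of $u,v$. The whole problem thus reduces to classifying the $3$-connected $K_{2,4}$-minor-free graphs that arise as a vertex split of a graph already on the list, and checking that each is again on the list.

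This classification organizes naturally along planarity, which is convenient because all members of $\Gi$ together with $D$ are planar, while the remaining eight small exceptions are nonplanar. When $G'\in\Gi$, I would show that a vertex split of $G^{(+)}_{n-1,r,s}$ which remains $3$-connected and $K_{2,4}$-minor-free must respect the spine/wheel structure and hence again have the form $G^{(+)}_{n,r',s'}$ with $r'+s'\leq n-1$; the point is that any split mixing the neighborhoods of the two high-degree ends, or splitting an internal spine vertex improperly, produces either a vertex of degree two (violating $3$-connectivity) or one of the configurations $Q_3$, $Q_3\ctr e$, $V_8$, or a direct standard $K_{2,4}$ minor, each forbidden (the first three because they contain $K_{2,4}$ minors, per Figure~\ref{fig:minordiagram}). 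When $G'$ is a small exception, the analysis is a finite check: list the vertex splits of each of the nine graphs, discard those that are not $3$-connected, and verify that the $K_{2,4}$-minor-free survivors are again on the list; in particular that the maximal graphs $C^+$ and $D$ admit no $3$-connected $K_{2,4}$-minor-free split, because every such split contains $Q_3$, $V_8$, or $K_{2,4}$ as a minor.

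The main obstacle is the vertex-split analysis for $G'\in\Gi$, since $\Gi$ is infinite and one needs a uniform argument rather than a finite enumeration. I expect the cleanest control comes from the proof of Lemma~\ref{lem:GK24}: in any $G^{(+)}_{m,r,s}$ the only candidate branch vertices for the two-vertex side of a standard $K_{2,4}$ minor are $v_1$ and $v_m$, and $G\vdel\{v_1,v_m\}$ is a path. Tracking what a split does to the neighborhoods of these two vertices and to that path, one shows that as soon as a split produces a second pair of vertices each with four suitably separated neighbors, a $K_{2,4}$ minor appears; otherwise the split merely lengthens the spine or enlarges a fan, keeping the graph in $\Gi$. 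The finite small-case analysis, while routine, is the other laborious piece, and it is precisely what the minor computations of Lemma~\ref{lem:allminors} and the contraction Table~\ref{tbl:contract} are set up to support.
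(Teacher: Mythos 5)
Your route is genuinely different from the paper's: the paper quotes Ding and Liu's characterization of Oct$\edel e$-minor-free graphs (Theorem~\ref{thm:Oct-e}) and then sorts the common $3$-sums of wheels and prisms into $\Gi$, the nine exceptions, and graphs with $K_{2,4}$ minors; you instead propose induction on $|V(G)|$, reducing to a classification of the $3$-connected $K_{2,4}$-minor-free vertex splits of graphs already on the list. The skeleton is legitimate (one caveat: Lemma~\ref{lem:splitter} by itself does not supply a contractible edge, since the sequence it produces may begin with a deletion; you need the classical Tutte--Thomassen contractible-edge theorem, which you do invoke), and the finite check for splits of the nine exceptions is unobjectionable in principle.

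However, the heart of the theorem in your scheme --- the claim that every $3$-connected, $K_{2,4}$-minor-free split of a graph $G^{(+)}_{m,r,s} \in \G$ lies again in $\Gi$ --- is asserted rather than proven, and the uniform mechanism you propose for it is inadequate. You suggest controlling the analysis via the proof of Lemma~\ref{lem:GK24}, where the only candidates for the small side of a standard $K_{2,4}$ minor are $v_1$ and $v_m$; that control does not survive splitting. Concretely, split the hub of $W_7 = G^+_{7,1,4}$, with rim $x_1x_2\ldots x_6$, into adjacent vertices $u,v$ where $u$ is adjacent to $x_1,x_2,x_3$ and $v$ to $x_4,x_5,x_6$. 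The result is $3$-connected; it is not in $\Gi$ (its only degree-$4$ vertices are $u,v$, and deleting them leaves a cycle rather than a path, so it is not $G_{8,3,3}$, the only member of $\Gi$ with the right order, size and degree sequence); and it is planar, hence none of the nine exceptions. As the theorem predicts, it has a $K_{2,4}$ minor, namely $R_1=\{x_1,x_6\}$, $R_2=\{x_3,x_4\}$, $S=\{x_2,x_5,u,v\}$ --- but both small-side branch sets are pairs of rim vertices, containing neither split vertex and neither former high-degree ``end,'' so no argument that only tracks the neighborhoods of $v_1$, $v_m$ and the spine path will find it. A correct uniform treatment must consider $K_{2,4}$ minors whose small-side branch sets are arbitrary connected pairs, over all distributions of neighbors in all splits of all $G^{(+)}_{m,r,s}$; that case analysis is the real content of the theorem, comparable in bulk to the paper's entire $3$-sum analysis, and your proposal defers it. As it stands this is a plausible plan, not a proof.
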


 Our original proof of this theorem examined the structure of a
$3$-connected $K_{2,4}$-minor-free graph relative to a longest
non-hamilton cycle in the graph.  We analyzed cases and either derived a
contradiction with a longer non-hamilton cycle or a $K_{2,4}$ minor, or
found a desired graph.
 However, we then discovered the recent systematic investigation by Ding
and Liu \cite{ding}, characterizing $H$-minor-free graphs for all
$3$-connected graphs $H$ on at most eleven edges.  These allow
us to give a shorter proof, which we present here.

 First we give some definitions. Denote by Oct$\edel e$ the graph
obtained from the octahedron by removing one edge. A \textit{$3$-sum} of
two $3$-connected graphs $G_1$ and $G_2$ is a graph $G$ obtained by
identifying a triangle of $G_1$ with a triangle of $G_2$ and possibly
deleting some of the edges of the common triangle as long as no degree
$2$ vertices are created. Any $2$-cut in $G$ would lead to a $2$-cut in
either $G_1$ or $G_2$ so $G$ is $3$-connected. An example is the graph
$C^+$ which is a $3$-sum of $K_5$ and a triangular prism. A
\textit{common $3$-sum} of three or more graphs is formed by specifying
one triangle in each graph and identifying all as a single triangle
called the \textit{common triangle}; again edges of the common triangle
may be deleted as long as no degree $2$ vertices are created. Let
$\mathcal{S}$ be the set of all graphs formed by taking common $3$-sums
of wheels and triangular prisms.  All graphs in $\mathcal{S}$
are $3$-connected. We use the following result due to Ding and Liu.

 \begin{theorem}[Ding and Liu \cite{ding}] Up to isomorphism the family
of $3$-connected Oct$\edel e$-minor-free graphs consists of graphs in
$\mathcal{S}$ and $3$-connected minors of $V_8$, $Q_3$, and $C^+$.
 \label{thm:Oct-e}
 \end{theorem}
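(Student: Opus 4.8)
The plan is to prove both directions of the characterization, treating the ``only if'' direction (membership in $\mathcal{S}$ or the sporadic list forces Oct$\edel e$-minor-freeness) as the routine half and the ``if'' direction (every $3$-connected Oct$\edel e$-minor-free graph lies in the list) as the substantive half. For the routine half I would first verify by direct inspection that each wheel $W_n$, the triangular prism, and the finitely many $3$-connected minors of $V_8$, $Q_3$ and $C^+$ contain no Oct$\edel e$ minor; this is a finite computation, since (as with Lemma \ref{lem:splitter} and Corollary \ref{lem:minorclosed}) it suffices to check the maximal sporadic graphs $V_8$, $Q_3$, $C^+$ and the two infinite generators $W_n$ and the prism. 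The remaining content of the routine half is a closure lemma: if $G$ is a common $3$-sum of wheels and prisms, then $G$ has no Oct$\edel e$ minor.

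To prove the closure lemma I would argue that an Oct$\edel e$ minor cannot ``cross'' a gluing triangle. The obstacle here is that a $3$-sum is a clique-sum along $K_3$, and since Oct$\edel e$ is itself only $3$-connected, the usual principle that a $3$-connected minor of a clique-sum along a clique of size at most $k$ must live in a single summand fails for $k=3$: the minor could exploit the common triangle. So I would analyze the distribution of the branch sets of an alleged Oct$\edel e$ model across a pairwise $3$-sum $G = G_1 \oplus_3 G_2$ glued on a triangle $\Delta$ (the common $3$-sum of several summands then follows by induction on the number of summands, all glued on the same $\Delta$). Writing $V(\Delta)=\{x,y,z\}$, these three vertices meet at most three branch sets; using that Oct$\edel e$ has minimum degree $3$ and stays connected after deleting any three vertices that do not form one of its few cuts, I would show the model can be pushed wholly into $G_1$ or $G_2$, contradicting the freeness of the summands. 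The special geometry of triangles in wheels (every triangle meets the hub) and in the prism (the two triangular faces) is what makes this pushing possible.

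For the hard direction I would induct on $|V(G)|$ for a $3$-connected Oct$\edel e$-minor-free graph $G$, handling small cases by finite enumeration. For the inductive step I would split on whether $G$ has a \emph{separating triangle}, i.e.\ a triangle $T$ whose vertex set is a $3$-cut realizing a $3$-sum $G = G_1 \oplus_3 G_2$ with both summands $3$-connected. If such a $T$ exists, then each $G_i$ is a minor of $G$ and hence Oct$\edel e$-minor-free, so by induction each $G_i$ lies in the list; I would then check that recombining two list members along a triangle either yields a common $3$-sum of wheels and prisms (when both summands are themselves such $3$-sums) or is impossible without creating an Oct$\edel e$ minor or a degree-$2$ vertex, which rules out a sporadic summand appearing inside a genuine $3$-sum. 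If no separating triangle exists, $G$ is essentially internally $4$-connected, and this is the base case: here I would invoke Seymour's Splitter Theorem (Lemma \ref{lem:splitter}), starting from the fact that every $3$-connected graph on at least four vertices has a $K_4=W_4$ minor, and build $G$ up by single $3$-connectivity-preserving edge additions and vertex splits, testing for an Oct$\edel e$ minor at each step. The absence of separating triangles sharply restricts which splits are available and forces the surviving graphs to be exactly the wheels, the prism, and the three sporadic families $V_8$, $Q_3$, $C^+$ together with their minors.

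The main obstacle I expect is the internally $4$-connected base case: showing by the splitter-theorem enumeration that nothing beyond the wheels, the prism, and the minors of $V_8$, $Q_3$ and $C^+$ survives requires a careful, terminating case analysis, since at each step one must rule out an Oct$\edel e$ minor while respecting the ``no separating triangle'' hypothesis. A secondary difficulty is the clique-sum closure lemma for $k=3$ described above, where one must genuinely control how a $3$-connected minor can use the common triangle rather than appeal to a general clique-sum principle; managing the inductive recombination so that it lands back inside $\mathcal{S}$, and never produces a sporadic summand inside a real $3$-sum, is the bookkeeping heart of the argument.
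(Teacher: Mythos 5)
First, a point of comparison: the paper does not prove this statement at all. It is quoted from Ding and Liu \cite{ding} and used as a black box in the proof of Theorem~\ref{thm:main24}, so there is no proof in the paper to measure yours against; your proposal has to stand on its own as a reconstruction of Ding and Liu's theorem. Judged that way, it contains a genuine structural flaw in the hard direction.

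Your induction splits on whether $G$ contains a \emph{separating triangle}, and in the ``no separating triangle'' case you claim a splitter-theorem enumeration leaves only wheels, the prism, and the $3$-connected minors of $V_8$, $Q_3$, $C^+$. This dichotomy fails, because the $3$-sum in the statement allows edges of the common triangle to be deleted, so a member of $\mathcal{S}$ need not contain its gluing triangle, or any separating triangle whatsoever. Concretely, take $G_{n,r,s}$ with $r,s \ge 3$, $r+s=n-2$ and no plus edge, e.g.\ $G_{8,3,3}$: by the analysis around Figure~\ref{fig:S_7m} it is a common $3$-sum of two wheels with one common-triangle edge deleted, hence lies in $\mathcal{S}$ and is Oct$\edel e$-minor-free; yet one checks directly that none of its four triangles is a cut, it is not a wheel or a prism, and by Lemma~\ref{lem:allminors} it is not a $3$-connected minor of $V_8$, $Q_3$ or $C^+$ (its $13$ edges on $8$ vertices already rule out $V_8$ and $Q_3$). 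So infinitely many members of $\mathcal{S}$ fall into your base case, where your claimed conclusion is false, and the induction collapses. A second defect in the same step: even when $G$ truly has no separating triangle, the splitter-theorem chain from $W_4$ up to $G$ passes through intermediate $3$-connected minors that may well have separating triangles, so the hypothesis you want to exploit is unavailable during the growth. The repair is to decompose along arbitrary $3$-cuts $\{x,y,z\}$, completing each side with the (possibly virtual) triangle edges --- each completed side is a minor of $G$ by $3$-connectivity --- so that the decomposition matches the edge-deleting $3$-sum of the statement; your recombination step must then also show that gluing members of $\mathcal{S}$ along anything other than a single common triangle forces an Oct$\edel e$ minor (this is exactly why common $3$-sums, not iterated $3$-sums, appear in the theorem), a substantive piece of work your sketch does not address.
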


 \begin{proof} [Proof of Theorem~\ref{thm:main24}]
 The results of subsections \ref{ss:classG} and \ref{ss:smallcases} give
the reverse direction of the proof.

 For the forward direction, Oct$\edel e$ contains $K_{2,4}$ as a
subgraph, so all $3$-connected $K_{2,4}$-minor-free graphs must be
Oct$\edel e$-minor-free graphs as described in Theorem~\ref{thm:Oct-e}. 
 We must decide which of those graphs are actually
$K_{2,4}$-minor-free.
 By Lemma~\ref{lem:allminors}, Figure \ref{fig:minordiagram} gives all
$3$-connected minors of $V_8$, $Q_3$, and $C^+$ up to isomorphism. 
The $K_{2,4}$-minor-free ones are uncircled; all are in $\G$ or
one of the nine small exceptions.

 So we must determine which members of $\mathcal{S}$ are
$K_{2,4}$-minor-free. Any common $3$-sum of four or more graphs has a
$K_{3,4}$ minor (the three vertices of the common triangle form the part
of size three) and hence a $K_{2,4}$ minor.  Thus, we consider common
$3$-sums of at most three graphs, analyzed according to the numbers of
wheels and prisms.

\begin{figure}[h]
	\centering \scalebox{.9}{\includegraphics{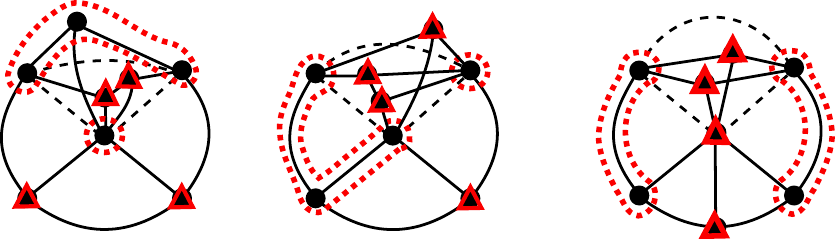}}
	\captionof{figure}{}
	\label{fig:S_1m}
\end{figure}

First consider a common $3$-sum of three wheels, $W_k$, $W_{\ell}$, and
$W_m$. For $k=\ell=5$ and $m=4$, since all vertices of $W_4=K_4$ are
equivalent, there are two ways up to symmetry to form a common $3$-sum
(disregarding the possible existence of the edges of the common
triangle): the hubs of the two wheels are either identified or not. 
Both result in a $K_{2,4}$ minor, as shown in the left and middle
pictures of Figure~\ref{fig:S_1m}. The dashed edges are the edges of the
common triangle which may or may not be present in the common $3$-sum.
Since graphs with $k, \ell \geq 5$ and $m \geq 4$ have one
of these two graphs as a minor, these graphs also have $K_{2,4}$ minors.
 Hence at most one of $k,\ell,m$ can be greater than 4.
 When $k= 6$ and $\ell=m=4$, there is again a $K_{2,4}$ minor, shown on the right in
Figure~\ref{fig:S_1m}. Graphs with $k >6$ and $\ell=m=4$ have this
graph as a minor and hence also have a $K_{2,4}$ minor.
 For $k=5$ and $\ell=m=4$, we have the graph shown on the left and middle
in Figure~\ref{fig:S_3m}. With no dashed edges of the common triangle,
this graph is isomorphic to $B$.
 With at least one dashed edge there is a $K_{2,4}$ minor as shown on
the left of the figure for $e_1$ ($e_2$ is symmetric) or in the middle
for $e_3$.
 Hence $k=\ell=m=4$, and we have the graph shown on the right in
Figure~\ref{fig:S_3m}.  With any two dashed edges, the graph has a
$K_{2,4}$ minor, shown in the figure for $e_1$ and $e_2$. With no or one
dashed edge, the graph is isomorphic to $K_{3,3}$ or $A$, respectively.

\begin{figure}[h]
	\centering \scalebox{.9}{\includegraphics{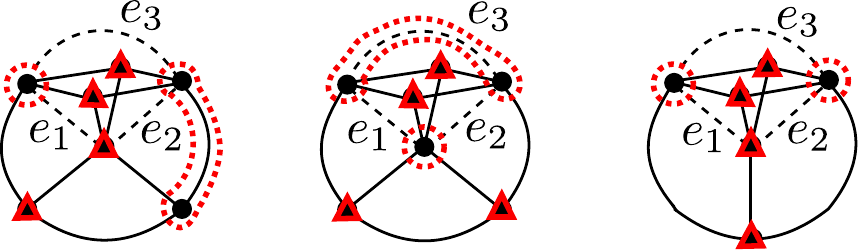}}
	\captionof{figure}{}
	\label{fig:S_3m}
\end{figure}
       
Next consider a common $3$-sum of two wheels and a prism.  If the wheels
are $W_5$ and $W_4$, then all common $3$-sums have the $K_{2,4}$ minor
shown on the left in Figure~\ref{fig:S_5m}.  Any other combination of
wheels gives this, and hence $K_{2,4}$, as a minor, unless both wheels
are $W_4$.  Then we have the graph shown on the right in
Figure~\ref{fig:S_5m}. With any dashed edge we have a $K_{2,4}$ minor,
shown in the figure for $e_1$. With no dashed edges, the graph is
isomorphic to $C$.

\begin{figure}[h]
	\centering \scalebox{.8}{\includegraphics{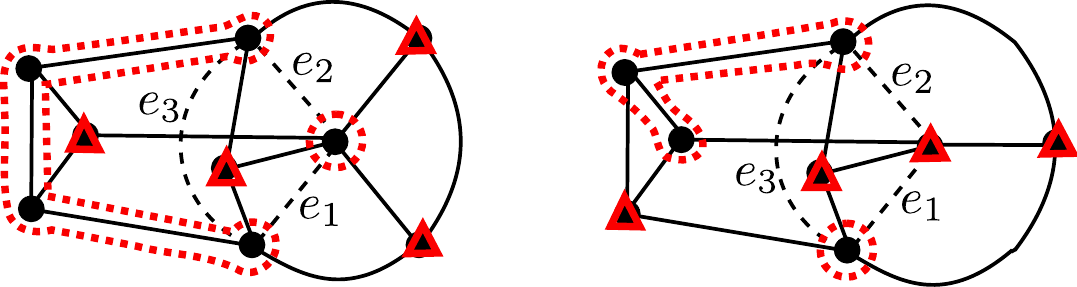}}
	\captionof{figure}{}
	\label{fig:S_5m}
\end{figure}

 Now consider a common $3$-sum of two wheels $W_k$ and $W_{\ell}$. 
Suppose the hubs of the wheels are not identified, or $k=4$ or $\ell=4$.
 We have the graph shown on the left in Figure~\ref{fig:S_7m}. At least
one of the edges labeled $e_1$ and $e_2$ must be present in the common
$3$-sum to ensure there are no degree $2$ vertices. Let $n=k+\ell-3$.
With $e_1$ and $e_2$, the graph is isomorphic to $G_{n,k-2,\ell-2}$.
With $e_1$ (or symmetrically $e_2$), the graph is isomorphic to either
$G_{n,k-3,\ell-2}$ or $G_{n,k-2,\ell-3}$. In all cases $e_3$ is the
optional plus edge. The spine is shown in the figure as the thick,
highlighted path. Hence we obtain graphs in $\Gi$.

 Now suppose that $k, \ell \ge 5$ and the hubs of $W_k$ and $W_{\ell}$
are identified in the common $3$-sum.
 The graph with $k = \ell = 5$ appears on the right in 
Figure~\ref{fig:S_7m}.
 With the edge labeled $e_1$, we have the $K_{2,4}$ minor shown,
and if $k, \ell \ge 5$ we get a similar minor.
 Without $e_1$, both $e_2$ and $e_3$ must be present to ensure there are
no vertices of degree $2$, and the graph is isomorphic in the general
case to $W_{k+\ell-3} \in \Gi$.

\begin{figure}[h]
	\centering \scalebox{.8}{\includegraphics{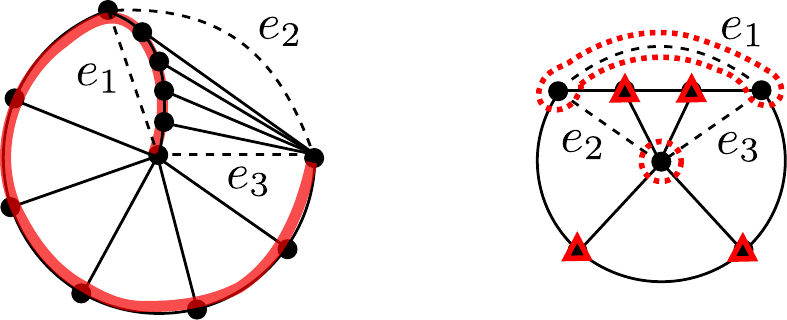}}
	\captionof{figure}{}
	\label{fig:S_7m}
\end{figure}
	
 Now consider a common $3$-sum of two prisms and one wheel. For $W_4$ we
have the graph on the left in Figure~\ref{fig:S_9m} with the $K_{2,4}$
minor shown; for any larger wheel we get this graph, and hence
$K_{2,4}$, as a minor.

 Next consider a common $3$-sum of two or three prisms.  For two prisms
we have the graph on the right in Figure~\ref{fig:S_9m}. At least two
dashed edges are needed to prevent a degree $2$ vertex and so we have
the $K_{2,4}$ minor shown. In a common $3$-sum of three prisms, the
dashed edges need not be present to ensure $3$-connectivity. However,
instead of using one of the dashed edges in the $K_{2,4}$ minor as on
the right in Figure~\ref{fig:S_9m}, we can use a path between these two
vertices through the third prism. Hence a similar $K_{2,4}$ minor
exists.

\begin{figure}[h]
	\centering \scalebox{.8}{\includegraphics{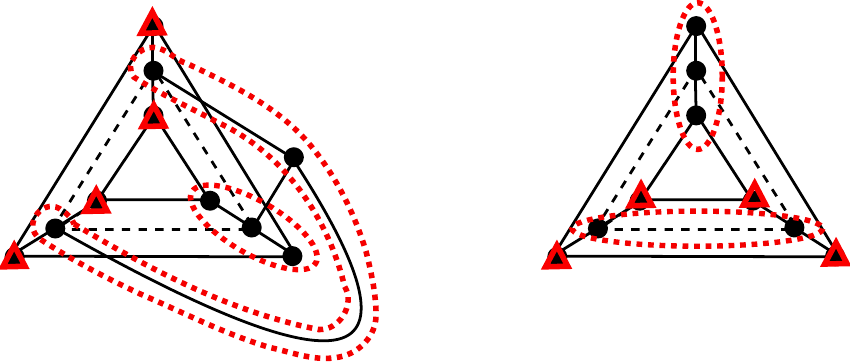}}
	\captionof{figure}{}
	\label{fig:S_9m}
\end{figure}

 Consider a common $3$-sum of one wheel $W_k$ and one prism; this is
unique up to isomorphism.  Figure \ref{fig:S_11m} shows the graph for
$k=5$ on the left.
 To prevent vertices of degree $2$, either $e_1$ is present, in which
case we have the $K_{2,4}$ minor shown, or the other two dashed edges
must exist, and the graph is isomorphic to $G_{8,2,4}$.
 For $k \ge 6$ there is a similar minor or the graph is isomorphic to
$G_{k+3, 2, k-1}$.
 The graph for $k=4$ is shown on the right in Figure~\ref{fig:S_11m}. At
least two dashed edges must be present to prevent degree $2$ vertices.
With two or three dashed edges the graph is isomorphic to $G_{7,2,3}$ or
$D$, respectively.

\begin{figure}[h]
	\centering \scalebox{.9}{\includegraphics{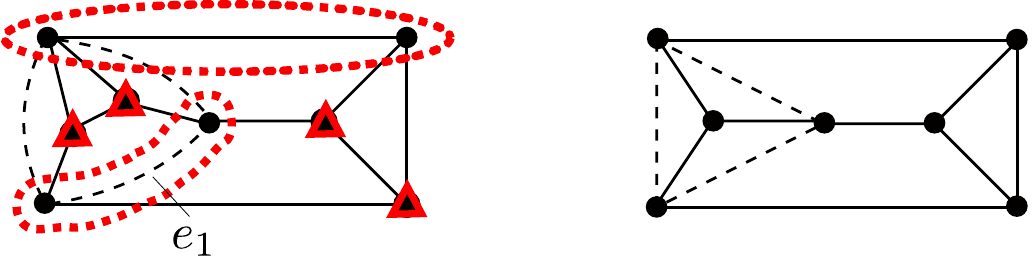}}
	\captionof{figure}{}
	\label{fig:S_11m}
\end{figure}

Finally, a common $3$-sum of a single graph is $W_k \isom G_{k,1,k-3}^+
\in \G$ or the triangular prism, isomorphic to $G_{6,2,2} \in \G$.
 \end{proof} 

In \cite{ding} Ding and Liu also prove the following result, where
$K_{3,3}^{\ddag}$ is the graph $K_{3,3}$ with two additional edges added
on the same side of the bipartition.

 \begin{theorem}[Ding and Liu \cite{ding}]  The family of all
$3$-connected $K_{3,3}^{\ddag}$-minor-free graphs consists of
$3$-connected planar graphs and $3$-connected minors of three
small graphs on at most ten vertices.
 \label{thm:K_3,3dag}
 \end{theorem}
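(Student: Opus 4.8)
The plan is to prove the two directions separately, using the Splitter Theorem (Lemma~\ref{lem:splitter}) as the main engine, exactly as in the analysis of Oct$\edel e$ behind Theorem~\ref{thm:Oct-e}. For the reverse direction, the key observation is that $K_{3,3}^{\ddag}$ contains $K_{3,3}$ as a subgraph and is therefore nonplanar. Since planarity is minor-closed, any graph with a $K_{3,3}^{\ddag}$ minor is nonplanar; hence every planar graph, and in particular every $3$-connected planar graph, is automatically $K_{3,3}^{\ddag}$-minor-free. It then remains only to confirm that the three named small graphs are themselves $K_{3,3}^{\ddag}$-minor-free, which is a finite check since each has at most ten vertices (an elaboration of the single-contraction argument used for $D$ in Lemma~\ref{lem:scD}). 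As $K_{3,3}^{\ddag}$-minor-freeness is preserved under taking minors, all $3$-connected minors of the three small graphs are $K_{3,3}^{\ddag}$-minor-free as well, giving the whole stated family.

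For the forward direction, let $G$ be a $3$-connected nonplanar $K_{3,3}^{\ddag}$-minor-free graph; the goal is to show $G$ is a minor of one of the three small graphs. If $G$ has no $K_{3,3}$ minor then, being nonplanar and $3$-connected, Wagner's theorem forces $G \isom K_5$, which is itself one of the small cases (or a minor of one). Otherwise $G$ has a $K_{3,3}$ minor, and since $K_{3,3}$ is not a wheel, Lemma~\ref{lem:splitter} applies cleanly: every $3$-connected graph containing $K_{3,3}$ as a minor can be built up from $K_{3,3}$ by a sequence of single-edge expansions (reverse deletions and vertex splits) that preserve $3$-connectivity. I would therefore run the systematic generation used in the proof of Theorem~\ref{thm:Oct-e}, but in reverse and rooted at $K_{3,3}$ (together with $K_5$), pruning any branch as soon as a $K_{3,3}^{\ddag}$ minor appears or $3$-connectivity is lost. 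Each surviving graph is identified up to isomorphism, and the maximal surviving graphs are claimed to be precisely the three small graphs.

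The hard part is proving that this generation terminates, that is, that there is an a priori bound (ten vertices) on any $3$-connected nonplanar $K_{3,3}^{\ddag}$-minor-free graph; without such a bound the search is not finite. The crux is a routing argument. Fix a standard $K_{3,3}$ model with bipartition classes $\{A_1,A_2,A_3\}$ and $\{B_1,B_2,B_3\}$. To upgrade this to a $K_{3,3}^{\ddag}$ minor it suffices to realize two further same-side connections, namely a path pattern $A_1 A_2 A_3$ among the $A$-branch sets, through vertices not needed by the essential $A$--$B$ connections. If $G$ has many vertices outside the model, its $3$-connectivity supplies, via Menger's theorem, enough internally disjoint paths to route these two extra connections while leaving the rest of the model intact, producing the forbidden minor. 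Making this precise, by bounding how much room a $K_{3,3}$ model can carry before such paths are forced, is the main obstacle; once the bound is established, the finite search above completes the proof.
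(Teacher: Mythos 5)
You should first note that the paper contains no proof of Theorem~\ref{thm:K_3,3dag} to compare against: this statement is imported verbatim from Ding and Liu \cite{ding}, and the paper uses it only as a black box (together with Theorem~\ref{thm:Oct-e}) to shorten the discussion after the proof of Theorem~\ref{thm:main24}. So what you have written is an attempt to reprove Ding and Liu's theorem itself, a result whose proof occupies a separate paper, not a step the present authors carry out.

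Judged on its own terms, your proposal has a genuine gap, and you name it yourself: the forward direction is a plan, not a proof. The reverse direction is fine ($K_{3,3}^{\ddag}$ is nonplanar, so no planar graph has it as a minor, and the three small graphs can be checked finitely), but for the forward direction you never identify the three small graphs, never carry out the Splitter-based generation from $K_{3,3}$ and $K_5$, and you leave the ``crux'' --- a size bound on $3$-connected nonplanar $K_{3,3}^{\ddag}$-minor-free graphs --- explicitly unproven. The Menger routing sketch you offer does not close this: $3$-connectivity supplies only three internally disjoint paths between prescribed vertices, and the existence of many vertices outside a $K_{3,3}$ model does not by itself force two additional same-side connections disjoint from the essential branch-set connections; making that work is precisely the hard structural analysis in \cite{ding}. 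One correction to your framing, though: an a priori bound is not logically required for your strategy to succeed. By Lemma~\ref{lem:splitter}, any $3$-connected $K_{3,3}^{\ddag}$-minor-free graph with a $K_{3,3}$ minor is reachable from $K_{3,3}$ through a chain of $3$-connected graphs, each a minor of $G$ and hence itself $K_{3,3}^{\ddag}$-minor-free; so the pruned, finitely-branching generation enumerates the whole family, and if at some stage every extension of every surviving graph is pruned, completeness (and the ten-vertex bound) follows a posteriori from the computation. Since you neither perform that enumeration nor establish the bound, the central step of your proof is missing.
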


 Because $K_{2,4}$ is a subgraph of $K_{3,3}^{\ddag}$,
$K_{2,4}$-minor-free graphs must be a subset of the graphs described in
Theorem~\ref{thm:K_3,3dag}.  Combining this with
Theorem~\ref{thm:Oct-e}, we conclude that all large enough
$K_{2,4}$-minor-free graphs $G$ must be planar (and so
$K_{3,3}$-minor-free) members of $\mathcal{S}$, hence common $3$-sums of
at most two graphs, which reduces the work needed to conclude that $G
\in \Gi$.
 The analysis required for small graphs is not simplified by using
Theorem~\ref{thm:K_3,3dag}, however, so we provide the full analysis
using only Theorem~\ref{thm:Oct-e}.


\section{The $2$-connected case}\label{2connected}

We begin this section by looking at how $K_{2,t}$ minors interact with
separations in a graph. We will mostly be concerned with
$2$-separations. 

\begin{lemma}
 Suppose $(H,J)$ is a $2$-separation in a graph $G$ with $V(H) \cap V(J)
= \{x,y\}$. If $G$ contains a standard $K_{2,t}$ minor $(\ptwo_1,
\ptwo_2; S)$ with $t \geq 3$, then one of the following hold:

 \hangitem (i) there exists a $K_{2,t}$ minor in $H+xy$,
 \hangitem (ii) there exists a $K_{2,t}$ minor in $J+xy$, or
 \hangitem (iii) $x \in \ptwo_1$ and $y \in \ptwo_2$ (or vice versa).
 \label{lem:lemma_s}
\end{lemma}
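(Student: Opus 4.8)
The plan is to decide, for the given standard model $(\ptwo_1,\ptwo_2;S)$, which side of the separation ``carries'' the minor, using the edge $xy$ to absorb any connection that was routed through the opposite side. Write $H^\circ = V(H)\vdel\{x,y\}$ and $J^\circ = V(J)\vdel\{x,y\}$, so that $\{x,y\}$ separates $H^\circ$ from $J^\circ$ and every edge incident with a vertex of $H^\circ$ lies in $H$ (and symmetrically for $J^\circ$ and $J$). I will record two elementary facts. First, since $G[\ptwo_1]$ and $G[\ptwo_2]$ are connected, any branch set meeting both $H^\circ$ and $J^\circ$ must contain $x$ or $y$. Second, if a singleton $s_j\in H^\circ$ then both of its edges to $\ptwo_1$ and $\ptwo_2$ lie in $H$, so $\ptwo_1$ and $\ptwo_2$ each meet $V(H)$; symmetrically for $s_j\in J^\circ$.

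The first main step is to dispose of case (iii). Suppose some singleton lies in $H^\circ$ and some singleton lies in $J^\circ$. By the second fact, each of $\ptwo_1$ and $\ptwo_2$ meets both $V(H)$ and $V(J)$. If such a branch set avoided $\{x,y\}$ it would meet both $H^\circ$ and $J^\circ$, and the first fact would force it to contain $x$ or $y$ --- a contradiction. Hence $\ptwo_1\cap\{x,y\}\neq\emptyset$ and $\ptwo_2\cap\{x,y\}\neq\emptyset$; as these branch sets are disjoint and $|\{x,y\}|=2$, we obtain $x\in\ptwo_1$ and $y\in\ptwo_2$ (or vice versa), which is exactly (iii).

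It then remains to treat the case where all singletons lie in one side. Since $t\ge 3$ and at most two singletons can lie in $\{x,y\}$, at least one singleton is private to a side; and by the previous paragraph we may assume all singletons lie in $V(H)$ (if they all lie in $V(J)$ we argue symmetrically to reach (ii)). I will build a $K_{2,t}$ minor in $H+xy$ by restricting the model: keep all $t$ singletons and replace $\ptwo_i$ by $\ptwo_i\cap V(H)$ for $i=1,2$. The private singleton in $H^\circ$ shows both restricted centers are nonempty, and they remain pairwise disjoint and disjoint from the singletons. The two things to verify are (a) that each restricted center is connected in $H+xy$, and (b) that every singleton is still adjacent to both restricted centers in $H+xy$; establishing these yields case (i).

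Part (a) is handled by a rerouting argument: a path inside $G[\ptwo_i]$ joining two vertices of $\ptwo_i\cap V(H)$ makes excursions into $J$ that leave and return at cut vertices; an excursion returning to the same cut vertex is deleted, and one running from $x$ to $y$ is replaced by the edge $xy$ (legitimate since the path then contains both $x$ and $y$, so both lie in $\ptwo_i\cap V(H)$). I expect part (b) to be the main obstacle, since a singleton on the cut, say $s_j=x$, may have reached a center through an edge of $J$. The key point is that if the edge from $x$ to $\ptwo_1$ lies in $J$, then $\ptwo_1$ meets $J^\circ$ as well as $V(H)$, so by the first fact $\ptwo_1$ contains a cut vertex; that vertex cannot be the singleton $x$, so it is $y$, and then $x$ is adjacent to $\ptwo_1\cap V(H)$ through the added edge $xy$. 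Since $y$ can belong to at most one of $\ptwo_1,\ptwo_2$, at most one such rescue is needed per center, and all required adjacencies survive in $H+xy$.
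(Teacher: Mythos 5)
Your proof is correct, but it is organized around a different pivot than the paper's, so a comparison is worthwhile. The paper's proof simply negates (iii): if (iii) fails, then since $R_1$ and $R_2$ are disjoint, at least one of them, say $R_2$, avoids $\{x,y\}$ entirely; being connected, $R_2$ is then confined to one interior, say $V(H)\setminus\{x,y\}$, and since every singleton must be adjacent to $R_2$, all of $S$ is immediately pinned inside $V(H)$ as well. Only $R_1$ needs to be restricted, and $(R_1\cap V(H),\,R_2;\,S)$ is the desired model in $H+xy$. You never use this observation; instead you case on where the singletons lie, which forces you to restrict \emph{both} centers and to carry out the full patch-up: the rerouting argument for connectivity of each restricted center, and the ``rescue'' of a cut singleton whose edge to a center runs through $J$. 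What your route buys is precisely the verification that the paper compresses into the single assertion that the restricted triple is a standard minor in $H+xy$: in particular, the case of a singleton equal to $x$ or $y$ whose edge to $R_1$ lies in $J$ (where one must conclude $y\in R_1$ and invoke the added edge $xy$) is a genuine subtlety present in both proofs, and your handling of it is right. Two harmless inaccuracies: first, when the edge from the cut singleton $x$ to $R_1$ lies in $J$, its other end could be $y$ itself rather than a vertex of $V(J)\setminus\{x,y\}$, so $R_1$ need not meet that interior --- but then $y\in R_1$ holds outright and the rescue goes through; second, since branch sets induce connected subgraphs you may take a path (not a walk) in $G[R_i]$, and a path cannot revisit a cut vertex, so the ``excursion returning to the same cut vertex'' case of your rerouting is vacuous.
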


\begin{proof} Let $H'=H\vdel\{x,y\}$ and $J'=J\vdel\{x,y\}$.
 Assume (iii) does not hold, then $\{x,y\} \cap R_i = \emptyset$ for at
least one $i$; we may suppose that $\{x,y\} \cap R_2 = \emptyset$. 
 Since $R_2$ induces a connected subgraph, this means that $R_2 \subseteq
V(H')$ or $R_2 \subseteq V(J')$; without loss of generality we assume
that $R_2 \subseteq V(H')$.  Then necessarily $S \subseteq V(H)$, and so
$(R_1 \cap V(H), R_2; S)$ is a standard $K_{2,t}$ minor in $H+xy$ and
(i) holds.
 \end{proof}

 By a $K_{2,t}$ minor $(R_1,R_2;S)$ \textit{rooted at $x$ and $y$}, we
mean $x \in R_1$ and $y \in R_2$. If part (iii) of
Lemma~\ref{lem:lemma_s} holds, then the $K_{2,t}$ minor splits into two
minors, $K_{2,t_1}$ and $K_{2,t_2}$ with $t_1+t_2=t$, both rooted at $x$
and $y$.  For $K_{2,4}$ minors this means that we will be concerned with
rooted $K_{2,2}$ minors; we will describe the structure of graphs
without rooted $K_{2,2}$ minors.  Note that Demasi \cite[Lemma
2.2.2]{demasi} has characterized graphs without $K_{2,2}$ minors rooted
at all four vertices, in terms of disjoint paths.

 An \textit{$xy$-outerplane embedding} of a connected graph $G$ with $x,
y \in V(G)$ is an embedding of $G$ in a closed disk $D$ such that a
hamilton $xy$-path $P$ of $G$ is contained in the boundary of $D$. 
 This is equivalent to embedding $G$ in the plane so that the outer
facial walk contains $P$ as an uninterrupted subwalk, or so that all
edges not in $P$ lie `on the same side' of $P$; we use this as our
practical definition.
 The path $P$ is called the \textit{outer path}. A graph is
\textit{$xy$-outerplanar}, or generically \textit{path-outerplanar}, if
it has an $xy$-outerplane embedding.

 A \textit{block} is a connected graph without a cutvertex: an isolated
vertex, an edge, or a $2$-connected graph.
 The \textit{blocks of a graph $G$} are the maximal blocks that are
subgraphs of $G$.
 The \textit{block-cutvertex tree} of a connected graph $G$ is a tree
whose vertices are the blocks and cutvertices of $G$; a block $B$ and
cutvertex $v$ are adjacent if $v \in V(B)$.

 The following useful properties are obvious, so we omit their proofs.

 \begin{lemma}\label{lem:xy-op-prop}
 (i) If $G$ is $xy$-outerplanar, $H$ is $yz$-outerplanar, and $V(G)
\cap V(H) = \{y\}$ then $G \cup H$ is $xz$-outerplanar.

 \smallskip\noindent
 (ii) Suppose $x \ne y$.  Then $G$ is $xy$-outerplanar if and only if
$G+xy$ is a block with an outerplane embedding in which $xy$ is on the
outer face.  Such an embedding of $G+xy$ is also $xy$-outerplane.
 \end{lemma}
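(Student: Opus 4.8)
The plan is to work from the ``one-page'' reformulation of $xy$-outerplanarity that the definition already supplies: $G$ is $xy$-outerplanar precisely when it has a Hamilton $xy$-path $P$ that can be drawn along a horizontal line, from $x$ on the left to $y$ on the right, with every edge of $G$ not on $P$ drawn as pairwise non-crossing arcs in the closed upper half-plane. This is just a convenient normalization of the embedding in a disk with $P$ on the boundary, and I will take it as the working picture for both parts.

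For (i), I would first concatenate the two Hamilton paths. Let $P_G$ be the Hamilton $xy$-path witnessing the $xy$-outerplanarity of $G$ and $P_H$ the Hamilton $yz$-path for $H$. Since $V(G)\cap V(H)=\{y\}$, the walk $P_G$ followed by $P_H$ meets every vertex of $G\cup H$ exactly once, i.e.\ it is a Hamilton $xz$-path $P$ of $G\cup H$. Next I would lay $P$ on the horizontal line by placing the drawing of $G$ over the segment from $x$ to $y$ and the drawing of $H$ over the segment from $y$ to $z$, so that the two pieces share only the point $y$. Every arc of $G$ then lies over $[x,y]$ and every arc of $H$ over $[y,z]$; because no edge joins a vertex of $G\setminus\{y\}$ to a vertex of $H\setminus\{y\}$, arcs of $G$ never cross arcs of $H$, and within each piece there are no crossings by hypothesis. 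Thus the combined drawing is an $xz$-outerplane embedding, giving (i). The only point to check is that the two half-plane pictures can be rescaled so their supports are genuinely disjoint except at $y$, which is immediate since each is a bounded drawing over its own segment.

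For (ii), in the forward direction I would start from an $xy$-outerplane embedding of $G$. Because $P$ is a Hamilton path, every vertex of $G$ lies on $P$ and hence on the outer boundary, so $x$ and $y$ are consecutive along the portion of the outer face not covered by $P$; adding $xy$ as an arc in that region produces an outerplane embedding of $G+xy$ with $xy$ on the outer face. Moreover $P+xy$ is a Hamilton cycle, so $G+xy$ is $2$-connected (or a single edge when $|V(G)|=2$), hence a block. Conversely, given an outerplane embedding of the block $G+xy$ with $xy$ on the outer face, I would invoke the standard fact that the outer face of a $2$-connected outerplane graph is bounded by a Hamilton cycle $C$; since $xy$ lies on the outer face it is an edge of $C$, so $C\edel xy$ is a Hamilton $xy$-path $P$ of $G$. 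Deleting $xy$ merges its two incident faces and leaves $P$ on the boundary of the new outer face with all remaining edges inside, which is exactly an $xy$-outerplane embedding of $G$; this establishes both the equivalence and the final sentence that the restricted embedding is $xy$-outerplane.

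I expect the only real obstacle to be bookkeeping the degenerate cases rather than any genuine difficulty. For (ii) one must treat $|V(G)|=2$ separately (where $G+xy=K_2$ is a block by the paper's definition and $G$ is the edge $xy$), and one must use connectivity of $G$---guaranteed by the blanket assumption in the definition of an $xy$-outerplane embedding---to rule out the case where $G+xy=K_2$ arises from two isolated vertices. With those conventions in place everything reduces to the two standard facts quoted above, which is why the statements can reasonably be called obvious.
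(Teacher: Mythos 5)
Your proof is correct, and there is no argument in the paper to compare it against: the authors declare both properties obvious and omit their proofs, so your write-up simply supplies the missing standard arguments (concatenating the two half-plane drawings at $y$ for (i); the fact that the outer face of a $2$-connected outerplane graph is a Hamilton cycle for (ii)). The one degenerate case you do not flag is $xy \in E(G)$, where $G+xy=G$ and your ``add the arc'' step is vacuous; but the nesting observation implicit in your own argument --- an arc joining the two ends of the spine encloses every other arc, so it bounds the outer face --- shows the existing edge $xy$ already lies on the outer face, and the equivalence goes through unchanged. Together with the two-isolated-vertices convention you already handle, this closes all corner cases, and your proof is a faithful account of why the paper may reasonably call the lemma obvious.
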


 We now characterize rooted $K_{2,2}$-minor-free graphs.

 \begin{lemma} Suppose $x$ and $y$ are distinct vertices of $G$ and
$G'=G+xy$ is a block.  Then $G$ has no $K_{2,2}$ minor rooted at $x$ and
$y$ if and only if $G$ is xy-outerplanar.
 \label{lem:rooted22}
 \end{lemma}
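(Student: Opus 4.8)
The plan is to work throughout with $G'=G+xy$, which is a block, and to translate via Lemma~\ref{lem:xy-op-prop}(ii): $G$ is $xy$-outerplanar if and only if $G'$ has an outerplane embedding with $xy$ on the outer face. I use the standard model, so a rooted $K_{2,2}$ is given by connected branch sets $R_1\ni x$ and $R_2\ni y$ together with two single vertices $s_1,s_2$, each adjacent to both $R_1$ and $R_2$. I also use the elementary fact that two internally disjoint $x$--$y$ paths, each of length at least $2$, produce a rooted $K_{2,2}$ (take $R_1=\{x\}$, $R_2=\{y\}$, and let $s_1,s_2$ be the interiors of the two paths).

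For ``$xy$-outerplanar $\Rightarrow$ no rooted $K_{2,2}$'' I argue topologically. Fix an $xy$-outerplane embedding with outer path $P=u_1\ldots u_n$ ($u_1=x$, $u_n=y$), so the boundary circle is $P$ followed by a vertex-free arc $\gamma$ from $y$ back to $x$. The key fact is that in an outerplane embedding two vertex-disjoint connected subgraphs cannot interleave along the boundary, since a path in one together with a boundary arc would separate the endpoints of a path in the other. Applied to $R_1,R_2$ together with $x=u_1$ and $y=u_n$, this forces an index $m$ with $V(R_1)\subseteq\{u_1,\ldots,u_m\}$ and $V(R_2)\subseteq\{u_{m+1},\ldots,u_n\}$. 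The rooted $K_{2,2}$ gives a cycle $Z$ meeting $R_1,s_1,R_2,s_2$ cyclically; all its vertices lie on the boundary, so their cyclic order along the boundary must match their order around $Z$. On the boundary the $R_1$-part (positions $\le m$) and the $R_2$-part (positions $\ge m+1$) are separated by two gaps, one of which is $\gamma$ and contains no vertex, whereas in $Z$ the two parts are separated by the two distinct vertices $s_1,s_2$; matching the orders forces one of $s_1,s_2$ onto the empty arc $\gamma$, a contradiction.

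For the converse I argue the contrapositive by induction on $|V(G)|$: if $G$ is not $xy$-outerplanar then $G$ has a rooted $K_{2,2}$. If $G$ is not $2$-connected then, since $G'$ is a block, the blocks of $G$ form a path from $x$ to $y$ with cutvertices $z_0=x,z_1,\ldots,z_t=y$; gluing $z_{i-1}z_i$-outerplane embeddings via Lemma~\ref{lem:xy-op-prop}(i) makes $G$ $xy$-outerplanar unless some block $B_i$ is not $z_{i-1}z_i$-outerplanar, and a rooted $K_{2,2}$ of $B_i$ at $z_{i-1},z_i$ extends to one of $G$ at $x,y$ by prolonging its branch sets along the block-path. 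So assume $G$ is $2$-connected. If $G'$ is $3$-connected it is non-outerplanar, and of three internally disjoint $x$--$y$ paths in $G'$ at least two avoid the edge $xy$ and have length $\ge 2$, yielding the required minor. Otherwise take a $2$-separation $H_0\cup J_0$ with $V(H_0)\cap V(J_0)=\{p,q\}$ and $xy\in E(H_0)$, and examine the bridge $J_0$. If $J_0$ is not $pq$-outerplanar it contains (by induction) a rooted $K_{2,2}$ at $p,q$, and two vertex-disjoint paths from $\{p,q\}$ to $\{x,y\}$ in the $2$-connected graph (Menger) prolong its branch sets to a rooted $K_{2,2}$ of $G$ at $x,y$. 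If $J_0$ is $pq$-outerplanar, I replace it by a path $p\,m\,q$ to get a strictly smaller block $G^\star+xy$, and show the replacement preserves both being $xy$-outerplanar (splice in the Hamilton $p$--$q$ path of $J_0$, using Lemma~\ref{lem:xy-op-prop}) and having a rooted $K_{2,2}$ at $x,y$; the induction hypothesis applied to $G^\star$ then closes the case.

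The hard part is this last reduction, and specifically the claim that a $pq$-outerplanar bridge cannot supply a \emph{second}, parallel $p$--$q$ connection to a rooted $K_{2,2}$ of $G$ at $x,y$: if it did, the trace of the minor on $J_0$ would already give a rooted $K_{2,2}$ of $J_0$ at $p,q$, contradicting that $J_0$ is $pq$-outerplanar, so the bridge is faithfully simulated by $p\,m\,q$. Making this rigorous requires analyzing exactly how the $R_i$ and the $s_j$ meet the interior of $J_0$ (using that $J_0$ attaches only at $p$ and $q$), and this is where Lemma~\ref{lem:lemma_s} with $t=2$ does the bookkeeping: every $K_{2,2}$ minor of $G$ lives in one side plus the virtual edge $pq$ or is rooted at $p,q$. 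Tracking the rooting at $x,y$ through these separations, rather than at the separating pair $\{p,q\}$, is the main technical burden of the argument.
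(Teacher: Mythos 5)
Both directions of your proof have genuine gaps. In the forward direction, the order-matching step fails: after you split the positions so that $V(R_1)\subseteq\{u_1,\ldots,u_m\}$ and $V(R_2)\subseteq\{u_{m+1},\ldots,u_n\}$, the claim that matching cyclic orders ``forces one of $s_1,s_2$ onto the empty arc $\gamma$'' is not true, because the boundary order $s_2,[R_1\cap Z],s_1,[R_2\cap Z]$ is a rotation of the $Z$-order $[R_1\cap Z],s_1,[R_2\cap Z],s_2$ and so matches it perfectly. Concretely, on the outer path $u_1u_2\ldots u_6$ ($x=u_1$, $y=u_6$) take $R_1=\{u_1,u_3\}$ (joined by the chord $u_1u_3$), $s_2=u_2$ (joined to $R_2$ by the chord $u_2u_5$), $s_1=u_4$, $R_2=\{u_5,u_6\}$: here $R_1,R_2$ do not interleave, and the cycle $Z=u_3u_4u_5u_2$ has boundary order equal to its cycle order, so both of your tests pass; the actual obstruction is that the chord $u_1u_3$ must cross the chord $u_2u_5$, a crossing involving an edge not on $Z$, which your argument never sees. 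The repair is to apply your non-interleaving fact to the \emph{augmented} disjoint connected subgraphs $R_1\cup\{s_2\}$ and $R_2\cup\{s_1\}$ (each with one connecting edge), whose positions $1<t_1<t_2<n$ do interleave --- or to use the paper's one-line argument: add a vertex $z$ in the outer face adjacent to $x$ and $y$; the result is outerplanar, and a rooted $K_{2,2}$ would give it a $K_{2,3}$ minor.

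In the converse, your reduction need not terminate: if $J_0$ is a path $pzq$ with a single interior vertex, then $G^\star\cong G$. This is unavoidable for $G=C_4$ with $x,y$ nonadjacent: $G'=G+xy$ is $K_4$ less an edge, its only $2$-cut is $\{x,y\}$, every admissible $J_0$ is a path of length two (so case (a) never applies), and your recursion loops forever even though this $G$ does have a rooted $K_{2,2}$. Separately, your appeal to Lemma~\ref{lem:lemma_s} ``with $t=2$'' is illegitimate: that lemma is stated and proved only for $t\ge 3$, and it is false for $t=2$ (in $C_4=xayb$ with the separation into the paths $xay$ and $xby$, the minor $(\{a\},\{b\};\{x,y\})$ satisfies none of (i), (ii), (iii)). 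The good news is that the ``hard part'' you single out is not actually needed: in a contrapositive induction you only require the two easy implications ($G^\star$ $xy$-outerplanar $\Rightarrow$ $G$ $xy$-outerplanar, by splicing in the Hamilton $pq$-path; and a rooted $K_{2,2}$ in $G^\star$ lifts to $G$, since $G^\star$ is a minor of $G$ fixing $x$ and $y$). For comparison, the paper's converse avoids $2$-separations entirely: it inducts on $|E(G)|$, handles the cutvertex case by gluing blocks, and in the $2$-connected case either $xy\in E(G)$ (delete it and apply induction via Lemma~\ref{lem:xy-op-prop}(ii)) or $xy\notin E(G)$, in which case two internally disjoint $xy$-paths, each with an internal vertex, immediately give the rooted $K_{2,2}$; your $3$-connected subcase is subsumed by this last observation.
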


 \setcounter{acase}{0}
 \begin{proof}
 $(\implied)$ Assume an $xy$-outerplane embedding of $G$.  Add a
vertex $z$ and edges $xz$, $yz$ to $G$ in the outer face; the resulting
graph $G''$ is outerplanar.  If $G$ has a $K_{2,2}$ minor rooted at $x$
and $y$, then $G''$ has a $K_{2,3}$ minor, which is a contradiction
since outerplanar graphs are $K_{2,3}$-minor-free.

 \smallskip\noindent
 $(\implies)$ Proceed by induction on $|E(G)|$.  The base case
for $G$ is $K_2$ which has no $K_{2,2}$ minor rooted at $x$ and $y$ and
is clearly $xy$-outerplanar.  Now assume the claim holds for all graphs
on at most $m \geq 1$ edges and suppose $|E(G)|=m+1$.  Then $G'$ is
$2$-connected.

 First assume there is a cutvertex $v$ in $G$.
 Since $G'$ is $2$-connected, the block-cutvertex tree of $G$ must be a
path $B_1v_1B_2v_2\ldots v_{k-1}B_k$ where $k \ge 2$, $x \in V(B_1)-\{v_1\}$
and $y \in V(B_k)-\{v_{k-1}\}$.  Define $v_0=x$ and $v_k=y$.  Because
$G$ has no $K_{2,2}$ minor rooted at $x$ and $y$, each block $B_i$ has
no $K_{2,2}$ minor rooted at $v_{i-1}$ and $v_i$ for $1 \leq i \leq k$. 
Thus, by induction each block $B_i$ is $v_{i-1}v_i$-outerplanar.  By
Lemma \ref{lem:xy-op-prop}(i), the outerplane embeddings of the blocks
can then be combined to create an $xy$-outerplane embedding of $G$, as
in Figure~\ref{fig:2conn_9}.  

\begin{figure}[h]
\centering \scalebox{.8}{\includegraphics{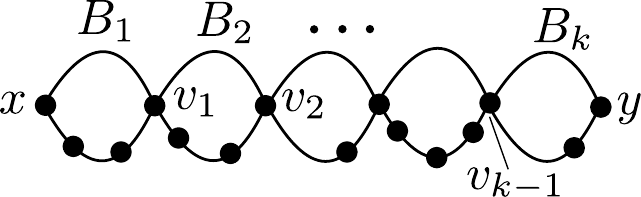}}
\caption{\label{fig:2conn_9}}
\end{figure}

 Now suppose $G$ has no cutvertex ($G$ is $2$-connected). 
 Assume that $G$ contains the edge $xy$. Then by induction, $G\edel xy$
has an $xy$-outerplane embedding. By Lemma \ref{lem:xy-op-prop}(ii),
$G$ also has an $xy$-outerplane embedding. Therefore, we may assume that
$G$ does not contain the edge $xy$. Since $G$ has no cutvertex, there
exist two internally disjoint $xy$-paths. Since $xy \notin E(G)$, each
path has an internal vertex, and hence they yield a $K_{2,2}$ minor
rooted at $x$ and $y$, a contradiction.
 \end{proof}

In order to describe the structure of $2$-connected $K_{2,4}$-minor-free
graphs, we need the following lemma:

 \begin{lemma} Suppose $t \ge 3$.  Let $z$ be a degree $2$ vertex in a
graph $G$ with neighbors $x$ and $y$. Let $G'$ be the graph formed from
$G$ by replacing the path $xzy$ with an $xy$-outerplanar graph $J$ on at
least three vertices. Then $G$ is $K_{2,t}$-minor-free if and only if
$G'$ is $K_{2,t}$-minor-free.  
 \label{lem:lemma_r}
 \end{lemma}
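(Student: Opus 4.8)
The plan is to prove both directions by relating $K_{2,t}$ minors in $G$ to those in $G'$, exploiting the fact that $J$ is $xy$-outerplanar and hence (by Lemma~\ref{lem:rooted22}) has no $K_{2,2}$ minor rooted at $x$ and $y$. The guiding intuition is that replacing the two-edge path $xzy$ by the more complicated graph $J$ cannot create a $K_{2,t}$ minor, because anything a $K_{2,t}$ model can extract from $J$ while entering and leaving only through $x$ and $y$ is no richer than what it could extract from the single internal vertex $z$ — a rooted $K_{2,2}$ inside $J$ would be needed to gain extra ``connections,'' and that is exactly what $xy$-outerplanarity forbids.

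For the easy direction $(\implies)$, I would show the contrapositive: if $G'$ has a standard $K_{2,t}$ minor, then so does $G$. Take a model $(R_1,R_2;S)$ in $G'$ and restrict attention to how it meets $V(J)$. The separation of $G'$ into $J$ and the rest of $G'$ (sharing only $x,y$) is a $2$-separation, so I can invoke Lemma~\ref{lem:lemma_s}: either the minor already lives (after adding $xy$) in the part of $G'$ outside $J$ — in which case it transfers directly to $G$ since that part is a subgraph of $G$ possibly together with $xy$, and I must argue the edge $xy$ is realizable in $G$ through the path $xzy$ — or the minor lives in $J+xy$, or $x\in R_1$ and $y\in R_2$. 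The first two sub-cases I expect to reduce quickly; the substantive sub-case is (iii), where $x$ and $y$ lie in distinct branch sets. Here I would contract all of $J$'s contribution down to the single edge/path structure available in $G$: because $J$ is $xy$-outerplanar it contains an $xy$-path (the outer path), so whatever branch-set structure the model used inside $J$ can be emulated in $G$ by routing through $z$, giving a $K_{2,t}$ minor in $G$.

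For the reverse direction $(\impliedby)$, suppose $G$ has a standard $K_{2,t}$ minor $(R_1,R_2;S)$; I want to produce one in $G'$. The point is that $G$ is (essentially) a minor of $G'$: contracting $J$ down to a single internal vertex recovers the path $xzy$, so $G$ is obtained from $G'$ by contracting $J$ to the path $xzy$. Hence any $K_{2,t}$ minor of $G$ lifts to a $K_{2,t}$ minor of $G'$ by un-contracting, since a minor of a minor is a minor. This direction should be almost immediate once the minor relationship between $G$ and $G'$ is stated cleanly.

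The main obstacle I anticipate is the case~(iii) analysis in the forward direction, where the $K_{2,t}$ model genuinely enters $J$ through both $x$ and $y$ in different branch sets. I must rule out the possibility that $J$ contributes \emph{two} distinct ``$b$-vertices'' of the $K_{2,t}$ (i.e.\ two elements of $S$ reachable from both $R_1$ and $R_2$ within $J$), because that is precisely a rooted $K_{2,2}$ minor in $J$, contradicting Lemma~\ref{lem:rooted22}; so $J$ can supply at most one unit of $b$-connectivity, exactly as the path $xzy$ does via $z$. Making this counting argument airtight — showing that the part of the model inside $J$ can be replaced by the single vertex $z$ without reducing the number of branch sets in $S$ adjacent to both $R_1$ and $R_2$ — is the crux, and is where the hypothesis $t\ge 3$ and the standard-minor normalization will be used. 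I would set up the bookkeeping by partitioning $S$ according to whether each $s_i$ lies inside $J'=J\vdel\{x,y\}$ or outside, and bounding $|S\cap V(J')|$ via the rooted $K_{2,2}$-freeness of $J$.
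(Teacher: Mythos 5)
Your plan follows the paper's proof almost exactly: the direction ``$G'$ minor-free $\Rightarrow$ $G$ minor-free'' comes from observing that $G$ is a minor of $G'$, and the hard direction applies Lemma~\ref{lem:lemma_s} to the $2$-separation $(H,J)$ of $G'$ with $H=G\vdel z$, then uses Lemma~\ref{lem:rooted22} to conclude $|S\cap V(J\vdel\{x,y\})|\le 1$, and finally re-routes the surviving structure through $z$ to recover a $K_{2,t}$ minor in $G$. (The paper packages this last step as a contradiction --- $H$ can have no $K_{2,t-1}$ minor rooted at $x$ and $y$ --- but that is the same counting argument you describe.) Your treatment of case (iii), which you correctly single out as the crux, is sound.

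The genuine gap is case (ii) of Lemma~\ref{lem:lemma_s}: a $K_{2,t}$ minor living in $J+xy$. You say the first two sub-cases ``reduce quickly,'' but unlike case (i) this one does \emph{not} transfer to $G$ --- the graph $J$ is entirely absent from $G$ --- so it must instead be shown to be impossible, and nothing in your proposal does that. The needed argument is: $J+xy$ is outerplanar by Lemma~\ref{lem:xy-op-prop}(ii), outerplanar graphs are $K_{2,3}$-minor-free, and $t\ge 3$. Moreover, this is the \emph{only} place where the hypothesis $t\ge 3$ is essential, contrary to your closing claim that it enters in the case~(iii) bookkeeping: your case~(iii) argument goes through verbatim for $t=2$, yet the lemma is false for $t=2$. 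For instance, let $G$ be the path $xzy$ and let $J$ be the $4$-cycle on $x,a,b,y$ in that cyclic order (equivalently, the Hamilton path $xaby$ plus the edge $xy$, so $J$ is $xy$-outerplanar); then $G'=J$ has a $K_{2,2}$ minor while $G$ does not, and the offending minor lies precisely in $J+xy$, i.e., in case (ii). So as written, the plan has a hole at exactly the step that separates $t\ge 3$ from $t=2$; it is easily filled, but filling it requires the outerplanarity of $J+xy$, a fact your proposal never invokes.
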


\begin{proof} $(\implied)$  $G$ is a minor of $G'$ so if $G'$ is
$K_{2,t}$-minor-free then so is $G$.

 \smallskip\noindent
 $(\implies)$ Let $H=G\vdel z$.  Then $(H,J)$ is a $2$-separation in
$G'$ with $V(H) \cap V(J)=\{x,y\}$. Because $G$ is $K_{2,t}$-minor-free,
we know that $H+xy$ is $K_{2,t}$-minor-free and also there is no
$K_{2,t-1}$ minor in $H$ rooted at $x$ and $y$. Because $J+xy$ is
outerplanar, $J+xy$ is $K_{2,t}$-minor-free. Thus by
Lemma~\ref{lem:lemma_s}, if $G'$ has a $K_{2,t}$ minor, then $x \in R_1$
and $y \in \ptwo_2$. If $|S \cap V(J)| \geq 2$, then $J$ has a $K_{2,2}$
minor rooted at $x$ and $y$ which contradicts Lemma~\ref{lem:rooted22}.
Thus $|S \cap V(H)| \geq t-1$ but now we have a $K_{2,t-1}$ minor rooted at
$x$ and $y$ in $H$ which is a contradiction. Hence $G'$ is
$K_{2,t}$-minor-free.  \end{proof}      
  
 We can now describe the structure of $2$-connected $K_{2,4}$-minor-free
graphs using one new concept.
 If $G$ is $K_{2,4}$-minor-free then $F \subseteq E(G)$ is
\textit{subdividable} if the graph formed from $G$ by subdividing all
edges of $F$ (replacing each edge by a path of length $2$) is
$K_{2,4}$-minor-free.
 The edge $e$ is \textit{subdividable} if $\{e\}$ is subdividable.
 If $F$ is a subdividable set then every edge of $F$ is subdividable,
but the converse is not true.

 \begin{theorem}
 Let $G$ be a block. Then $G$ is
$K_{2,4}$-minor-free if and only if one of the following holds.

 \hangitem (i) $G$ is outerplanar.

 \hangitem (ii) $G$ is the union of three $xy$-outerplanar graphs $H_1,
H_2, H_3$ and possibly the edge $xy$, where $|V(H_i)| \ge 3$ for each
$i$ and $V(H_i) \cap V(H_j) = \{x,y\}$ for $i \ne j$.

 \hangitem (iii) $G$ is obtained from a $3$-connected
$K_{2,4}$-minor-free graph $G_0$ by replacing each edge $x_iy_i$ in a
(possibly empty) subdividable set of edges $\{x_1y_1,x_2y_2,\allowbreak
\ldots,x_ky_k\}$ by an $x_i y_i$-outerplanar graph $H_i$, where $V(H_i)
\cap V(G_0) = \{x_i, y_i\}$ for each $i$, and $V(H_i) \cap V(H_j)
\subseteq V(G_0)$ for $i \ne j$.
 \label{lem:2connclass}
 \end{theorem}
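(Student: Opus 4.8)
The plan is to prove both directions separately, with the reverse direction serving as a warm-up that exercises the tools already developed. For the reverse direction, I would show that each of (i), (ii), (iii) yields a $K_{2,4}$-minor-free block. Case (i) is immediate since outerplanar graphs are $K_{2,3}$-minor-free, hence $K_{2,4}$-minor-free. For case (ii), I would apply Lemma~\ref{lem:lemma_s} to the natural $2$-separation: any standard $K_{2,4}$ minor either lives in one $H_i+xy$ (impossible, as each is outerplanar and hence $K_{2,3}$-minor-free), or is rooted at $x,y$. If rooted, each $H_i$ can contribute at most one vertex of $S$ to avoid a rooted $K_{2,2}$ (by Lemma~\ref{lem:rooted22}, since each $H_i$ is $xy$-outerplanar), giving at most $3 < 4$ vertices in $S$, a contradiction. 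For case (iii), the key observation is that starting from the $3$-connected $K_{2,4}$-minor-free graph $G_0$ and replacing edges in a subdividable set by $xy$-outerplanar graphs preserves $K_{2,4}$-minor-freeness; this follows by iterating Lemma~\ref{lem:lemma_r}, since subdividing an edge (replacing $xy$ by a path $xzy$) and then applying the lemma lets us swap in any $x_iy_i$-outerplanar graph on at least three vertices without creating a $K_{2,4}$ minor.

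The forward direction is the substantial part. Assume $G$ is a $K_{2,4}$-minor-free block that is not outerplanar; I must show (ii) or (iii) holds. The strategy is to reduce $G$ to its ``$3$-connected core'' by repeatedly contracting the interiors of $2$-separations into single edges. Concretely, I would identify maximal $2$-separations $(H,J)$ with $V(H)\cap V(J)=\{x,y\}$ where one side, say $J$, is ``outerplanar-attached''; using Lemma~\ref{lem:lemma_s} and $K_{2,4}$-minor-freeness one shows $J$ must itself be $xy$-outerplanar (otherwise $J+xy$ contains a $K_{2,3}$, and combined with a path on the $H$-side through $x,y$ one builds a $K_{2,4}$). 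Replacing each such $xy$-outerplanar piece by a single edge $xy$ (or by the path $xzy$, the reverse of Lemma~\ref{lem:lemma_r}) yields a smaller graph $G_0$ that is still $K_{2,4}$-minor-free.

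The central case split is on the connectivity of this reduced graph $G_0$. If $G_0$ is $3$-connected, then by Theorem~\ref{thm:main24} it is one of the characterized graphs, and we are in case (iii): the edges we replaced form the set $\{x_1y_1,\ldots,x_ky_k\}$, and this set is subdividable precisely because the full reconstruction is $K_{2,4}$-minor-free (each $H_i$ has at least three vertices, so each replacement dominates a subdivision). If $G_0$ is only $2$-connected, the reduction has ``bottomed out'' at a graph with an essential $2$-separation that cannot be absorbed into an edge; here I would argue that $G$ decomposes across a single vertex pair $\{x,y\}$ into exactly three $xy$-outerplanar parts, giving case (ii). The count of three is forced: two parts would make $G$ itself $xy$-outerplanar (hence planar, and by pushing the embedding one checks it is outerplanar, contradicting our assumption), while four or more parts immediately yield a rooted $K_{2,2}$ and thus a $K_{2,4}$ minor by the $S$-counting argument above.

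The main obstacle will be making the reduction to $G_0$ both well-defined and reversible in a way that cleanly separates cases (ii) and (iii). The difficulty is that a $2$-connected graph can have many overlapping $2$-separations, so I must argue that contracting maximal $xy$-outerplanar ``flaps'' to edges terminates in a canonical core whose connectivity dictates which case applies, and that no $K_{2,4}$ minor is destroyed or created in either direction of the contraction. The subdividability condition in (iii) is exactly the bookkeeping that certifies the core's edges can simultaneously host outerplanar graphs of order $\ge 3$ without cooperating to build a $K_{2,4}$; verifying that the set we extract is genuinely subdividable (not merely edgewise subdividable) is the delicate point, and I expect to lean on Lemma~\ref{lem:lemma_r} applied one edge at a time together with a global application of Lemma~\ref{lem:lemma_s} to rule out minors that straddle several replaced edges at once.
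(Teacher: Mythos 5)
Your reverse direction is sound: case (iii) is essentially the paper's own argument via Lemma~\ref{lem:lemma_r}, and your direct $S$-counting proof of case (ii) is an acceptable substitute for the paper's slicker route (which uses Lemma~\ref{lem:lemma_r} to collapse the three pieces and reduce to $K_{2,3}$ or $K_{1,1,3}$). The forward direction, however, has two genuine gaps. First, your justification that one side of a $2$-separation $(H,J)$ must be $xy$-outerplanar is wrong as stated: when $J$ is not $xy$-outerplanar, Lemma~\ref{lem:rooted22} gives a $K_{2,2}$ minor in $J$ rooted at $x$ and $y$, and combining that with a path through the $H$-side yields only a $K_{2,3}$ minor, not a $K_{2,4}$ minor. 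The correct statement is that the two sides cannot \emph{both} fail to be $xy$-outerplanar (two rooted $K_{2,2}$ minors glued at $x,y$ give $K_{2,4}$), or that one side cannot fail when the other side contributes at least two components. In particular, a $2$-separation of a $K_{2,4}$-minor-free block may well have one non-$xy$-outerplanar side --- that is precisely the situation leading to case (iii) --- so your reduction must be careful about which side it absorbs.

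Second, and more seriously, the dichotomy driving your case split does not hold. Since (once corrected) every $2$-separation has at least one $xy$-outerplanar side, every $2$-separation can be absorbed into an edge, so the reduction never ``bottoms out'' at a $2$-connected core with an unabsorbable separation. Run your procedure on $K_{2,3}$: absorbing one of the three $xy$-outerplanar paths gives $C_4$ plus a chord, absorbing again gives a triangle, and the three-part structure certifying case (ii) has been destroyed; the terminal graph is neither $3$-connected nor a witness for (ii). Hence ``core $3$-connected $\Rightarrow$ (iii), core merely $2$-connected $\Rightarrow$ (ii)'' cannot be the organizing principle; the (ii) configuration (a $2$-cut $\{x,y\}$ such that $G\vdel\{x,y\}$ has three components) must be detected before the reduction eats it. The paper does this by inducting on $|V(G)|$ and splitting on the number $\ell$ of components of $G\vdel\{x,y\}$: $\ell\ge4$ is impossible, $\ell=3$ gives (ii), and $\ell=2$ is handled by applying induction to the non-outerplanar side plus the edge $xy$. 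Your sketch also omits the one genuinely delicate step of that induction, which is part of the ``reversibility'' you defer: when the flap $H_2$ attaches along an edge $xy$ that lies \emph{inside} another outerplanar piece $J(f)$ of the smaller graph's decomposition, one must show by a connectivity argument that $xy$ lies on the outer path of $J(f)$, so that substituting $H_2$ for $xy$ keeps $J(f)$ path-outerplanar. Without this, the re-attachment step --- equivalently, the well-definedness of your flap contraction --- is unproven; you correctly identify this as the main obstacle but do not resolve it.
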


\begin{proof} $(\implied)$  For (i), all outerplanar graphs are
$K_{2,4}$-minor-free since they are $K_{2,3}$-minor-free.  To show that
a graph $G$ in (ii) is $K_{2,4}$-minor-free, we use
Lemma~\ref{lem:lemma_r}.  $G$ is $K_{2,4}$-minor-free if the graph
formed from $G$ by replacing each of the three outerplanar pieces with a
single vertex is $K_{2,4}$-minor-free.  This graph is either $K_{2,3}$
or $K_{1,1,3}$ and is thus $K_{2,4}$-minor-free.  We use
Lemma~\ref{lem:lemma_r} again to show that graphs in (iii) are
$K_{2,4}$-minor-free.  Let $G'$ be formed from a $3$-connected
$K_{2,4}$-minor-free graph by subdividing a subdividable set of edges. 
$G'$ is still $K_{2,4}$-minor-free by the definition of subdividable
set.  Now replace each subdivided edge $x_i z_i y_i$ with an
$x_iy_i$-outerplanar graph; by Lemma~\ref{lem:lemma_r}, the resulting
graph is still $K_{2,4}$-minor-free.

 \smallskip\noindent
 $(\implies)$ Suppose $G$ is a $K_{2,4}$-minor-free block.  We proceed
by induction on $n=|V(G)|$.  As the basis, if $n \le 4$ then $G$ is one
of $K_1$, $K_2$, $K_3$, $K_{1,1,2}$ or $C_4$, which are outerplanar and
covered by (i), or $K_4$, which is $3$-connected and covered by (iii). 
If $G$ is $3$-connected then (iii) holds.

 So we may assume that $n \ge 5$ and $G$ has a $2$-cut $\{x,y\}$.  Let
$H'_1, H'_2, \ldots, H'_\ell$, where $\ell \ge 2$, be the components of
$G\vdel \{x,y\}$, and for each $i$ let $H_i$ be the subgraph induced by
$V(H'_i) \cup \{x,y\}$.

 If $\ell \ge 4$, then $G$ has a $K_{2,4}$ minor with $x \in \ptwo_1$, $y
\in \ptwo_2$, and $S$ consisting of a vertex from each of $H'_1, H'_2,
H'_3, H'_4$.  This is a contradiction.

 Suppose $\ell=3$. 
 If some $H_i$ is not $xy$-outerplanar, then we have a $K_{2,4}$ minor:
by Lemma~\ref{lem:rooted22}, there is a $K_{2,2}$ minor rooted at $x$
and $y$ in $H_i$, to which we may add one vertex from each of the two
other components of $G\vdel\{x,y\}$.  Thus, $H_1, H_2, H_3$ are all
$xy$-outerplanar and (ii) holds.

 \def\Gprime{H^+_1}
 Now suppose $\ell=2$. If neither $H_1$ nor $H_2$ is $xy$-outerplanar, then $G$
contains a $K_{2,4}$ minor. If both are $xy$-outerplanar then $G$ is
outerplanar as in (i). Hence one, say $H_1$, is not $xy$-outerplanar and
the other, $H_2$, is $xy$-outerplanar.
 Let $\Gprime = H_1+xy$.
 Since $|V(\Gprime)| <
|V(G)|$, by induction $\Gprime$ is in (i), (ii), or (iii).
 By Lemma \ref{lem:xy-op-prop}(ii), because $H_1$ is not
$xy$-outerplanar,  $\Gprime$ is not outerplanar and hence not in (i).
 If $\Gprime$ is in (ii), then the $2$-cut $\{u,v\}$ in $\Gprime$ giving
three components is also a $2$-cut in $G$ giving three components, and
so, applying the argument for $\ell=3$ to $\{u,v\}$, (ii) holds for $G$.

 Now assume $\Gprime$ is in (iii): $\Gprime$ is a $3$-connected
$K_{2,4}$-minor-free graph $G_0$ with each edge $f=uv$ of a
subdividable set $F$ replaced by a
$uv$-outerplanar graph $J(f)$.
 Let $H^*_2$ be $H_2+xy$ if $xy \in E(G)$, and $H_2$ otherwise.  In
either case $H_2^*$ is $xy$-outerplanar and $G$ is obtained from
$\Gprime$ by replacing $xy$ by $H_2^*$.

 Suppose first that $xy \notin \bigcup_{f \in F} E(J(f))$;
 then $xy \in E(G_0) \sdel F$.
 If we let $J(xy)=H_2^*$, then $G$ is obtained from $G_0$ by replacing
each $f \in F \cup \{xy\}$ by $J(f)$.
 The graph obtained from $G$ by replacing every $J(f)$, $f \in F \cup
\{xy\}$, by a path of length two with the same ends as $f$ is the same as
the graph obtained from $G_0$ by subdividing every edge of $F \cup
\{xy\}$.
 Since $G$ is $K_{2,4}$-minor-free, this graph is also
$K_{2,4}$-minor-free by repeated application of Lemma \ref{lem:lemma_r},
 so $F \cup \{xy\}$ is subdividable in $G_0$.
 Hence (iii) holds for $G$.

 Next suppose $xy$ is an edge of some $J(f)$, $f = uv \in F$, with outer
path $P$.
 Suppose $xy \notin E(P)$.
 Then there is at least one vertex in the subpath $Q$ of $P$ between,
but not including, $x$ and $y$.
 No vertex of $Q$ is adjacent to a vertex of $V(\Gprime)-V(J(f))$ or,
because $xy \in E(J(f))$, to a vertex of $V(J(f)) \sdel (V(Q) \cup
\{x,y\})$.
  Now there exists $w \in V(\Gprime)\sdel V(J(f))$, and $Q$ and $w$ are in
different components of $\Gprime\vdel\{x,y\}=H'_1$, contradicting the
fact that $H'_1$ is connected.
 So $xy \in E(P)$.  Then
the graph $J'(f)$ obtained by replacing $xy$ in
$J(f)$ with the $xy$-outerplanar graph $H_2^*$
is still $uv$-outerplanar. Thus $G$ is again in (iii).
 \end{proof}

 To complete the $2$-connected case, it remains to find all subdividable
sets of edges in part (iii) of Theorem~\ref{lem:2connclass} for each
$3$-connected $K_{2,4}$-minor-free graph.  If a set of edges is
subdividable, then all subsets of that set are also subdividable, so it
suffices to state the maximal (under inclusion) subdividable sets of
edges in each graph. We start with graphs in $\G$ with $n \geq 6$.
 The graphs $G_{6,2,2}$, $G_{6,2,2}^+\isom G_{6,2,3}$, and $G_{7,2,3}$
need special treatment and are dealt with later.

 \begin{theorem} Consider $G_{n,r,s}^{(+)} \in \G$ with $r \leq s$ and
$n \geq 6$.  (Results for $r > s$ may be obtained using the isomorphism
between $G_{n,r,s}^{(+)}$ and $G_{n,s,r}^{(+)}$.)

 \smallskip\noindent
 (i) When $r = 1$, the wheel $G_{n,1,n-3}^+$ has $n-1$ maximal
subdividable sets of edges. Each one includes all edges of the rim as
well as one of the spokes.

 \smallskip\noindent
 (ii) When $r=2$, $G_{n,2,s}$ with $s \ge 4$ or $G_{n,2,s}^{+}$ with $s
\ge 3$ has two maximal subdividable sets of edges: the edge sets
of
 the spine, $v_1v_2\ldots v_n$,
 and second spine, $v_{n-2}v_{n-3}\ldots v_1v_{n-1}v_n$. 

 \smallskip\noindent
 (iii) When $r \ge 3$ the only maximal subdividable set of edges is the
edge set of the spine, $v_1v_2\ldots v_n$.
 \label{lem:subG}
 \end{theorem}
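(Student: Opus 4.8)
The plan is to treat the two directions with different tools: Lemma~\ref{lem:GK24} for showing the listed sets are subdividable, and an analysis of rooted $K_{2,3}$ minors (a pointed version of Lemmas~\ref{lem:lemma_s} and~\ref{lem:rooted22}) for maximality, throughout using the involution $\sigma_n$ to pass between the spine and the second spine.

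\emph{The listed sets are subdividable.} The central observation is that subdividing every spine edge of $G=G^{(+)}_{n,r,s}$ produces a spanning subgraph of a larger graph in the same family. Writing the subdivided spine as $u_1u_2\ldots u_{2n-1}$ with $v_k=u_{2k-1}$, each chord $v_1v_{n-i}$ becomes $u_1u_{2n-2i-1}$ and each chord $v_nv_{1+j}$ becomes $u_{2n-1}u_{2j+1}$, so the result is a spanning subgraph of $G^{(+)}_{2n-1,2r,2s}$; since $2r+2s\le 2(n-1)=(2n-1)-1$, Lemma~\ref{lem:GK24} makes this graph $K_{2,4}$-minor-free, hence so is our subdivision, and the spine is subdividable. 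For $r=2$ the involution $\sigma_n$ sends the spine $P$ to the second spine $\sigma_n(P)$ and is an automorphism of $G_{n,2,n-4}$ and of $G^+_{n,2,n-3}$, and an isomorphism between $G^+_{n,2,n-4}$ and $G_{n,2,n-3}$; applying $\sigma_n$ reduces subdividability of the second spine to that of the spine. For the wheel I would use its rotational symmetry to assume the subdivided spoke is $v_{n-1}v_n$, so that the set is the spine together with the rim chord $v_1v_{n-1}$; the corresponding subdivision is then a spanning subgraph of $G_{2n,2,2n-5}$, which is $K_{2,4}$-minor-free by Lemma~\ref{lem:GK24}.

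\emph{Maximality and uniqueness.} Since every subset of a subdividable set is subdividable, it is enough to know when a subdivided edge is forced to create a $K_{2,4}$ minor. The governing principle is a set analogue of the rooted argument: subdividing $F$ creates a $K_{2,4}$ minor exactly when $G$ has disjoint connected branch sets $R_1,R_2$ joined by four independent connections, each either a vertex adjacent to both or an edge of $F$ from $R_1$ to $R_2$, with at least one connection of the latter kind; for a single edge $e=xy$ this specialises to: $e$ is non-subdividable iff $G$ has a $K_{2,3}$ minor with $x\in R_1$ and $y\in R_2$. Using this I would show that for $r\ge 3$ every non-spine edge ($v_1$-chords, $v_n$-chords, and the plus edge) admits such a rooted $K_{2,3}$, obtained by taking $R_1$ near $v_1$, a connected $R_2$ containing $v_{n-1}$ and $v_n$, and a third connector routed along the spine through the fan at $v_n$; hence no non-spine edge lies in any subdividable set and the spine is the unique maximal set. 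For $r=2$ the only exceptional chord is $v_1v_{n-1}$ (with its $\sigma_n$-image $v_{n-2}v_{n-1}$): it has no rooted $K_{2,3}$, whereas $v_1v_{n-2}$ and every $v_n$-chord do, the decisive point being that a spine detour $v_3v_4\ldots v_{n-3}$ can reach $v_{n-2}$ but cannot reach $v_{n-1}$; combined with the fact that the spine together with $v_1v_{n-1}$ is not subdividable, this pins the maximal sets down to exactly the spine and the second spine. For the wheel, any two subdivided spokes at $v_i,v_j$ yield a $K_{2,4}$ minor with $R_2=\{v_n\}$, $R_1$ the shorter rim arc joining $v_i$ and $v_j$, and the four connectors the two spoke-subdivision vertices together with the two rim vertices on the longer arc adjacent to the ends of $R_1$; this needs $n\ge 6$ so that the longer arc has at least two interior vertices (it fails for $W_5$), and it shows every subdividable set contains at most one spoke, giving the $n-1$ sets described.

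The hard part will be this maximality direction, namely assembling the explicit branch sets and connectors in each subdivided graph. The analysis genuinely depends both on whether $r+s=n-1$ or $r+s=n-2$ and on the type of the extra edge, and --- most delicately --- it must handle \emph{sets} of subdivisions rather than single edges, because individually subdividable edges need not be jointly subdividable: for $r=2$, each spine edge and the chord $v_1v_{n-1}$ are separately subdividable, yet the spine together with $v_1v_{n-1}$ is not. Tracking exactly which connections several simultaneous subdivision vertices can supply, and the routing arguments that separate a subdividable chord from a non-subdividable one, is where the real work lies.
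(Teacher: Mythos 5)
Your subdividability direction is correct and is essentially the paper's own argument, worked out in more explicit coordinates: the paper likewise observes that subdividing the spine of $G^{(+)}_{n,r,s}$ gives a subgraph of a member of $\G$ on $2n-1$ vertices, handles the second spine through the involution $\sigma_n$ (using exactly the automorphism/isomorphism pattern you list), and embeds the subdivided rim-plus-spoke of the wheel into $G_{2n,2,2n-4}$ (your $G_{2n,2,2n-5}$ computation is a harmless sharpening). Your single-edge criterion --- $e=xy$ is non-subdividable iff $G$ has a $K_{2,3}$ minor with $x\in R_1$, $y\in R_2$ --- is also valid, and it is equivalent to what the paper actually does: the paper's Case A and Case B exhibit $K_{2,4}$ minors in $G^+_{5,2,2}\circ v_1v_5$ and $G^+_{5,2,2}\circ v_1v_3$ in which the subdivision vertex is one of the four small branch sets, so deleting it yields precisely your rooted $K_{2,3}$. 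Your wheel argument (two subdivided spokes, $R_2$ the hub, $R_1$ the shorter rim arc, connectors the two subdivision vertices plus two rim vertices) is the paper's Figure~\ref{fig:wheel1m} argument and is correct for $n\ge 6$.

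However, the maximality direction for $r\ge 2$ --- the bulk of the theorem --- is not actually carried out in your proposal; you defer it as ``where the real work lies.'' The paper does this work concretely: it propagates the Case A and Case B minors to all $G^{(+)}_{n,r,s}$ by explicit contractions, with a genuine case split (B1, B2, B3) depending on whether the plus edge is present and on the position of the chord, which your uniform recipe (``$R_1$ near $v_1$, $R_2$ containing $v_{n-1}$ and $v_n$'') glosses over; note that the plus edge is handled only by Case A and the chord $v_1v_{n-r}$ only by Case B, so no single routing works for all non-spine edges. More seriously, your $r=2$ endgame contains a logical gap: you propose to finish using ``the fact that the spine together with $v_1v_{n-1}$ is not subdividable,'' but non-subdividability of that full union does not pin the maximal sets down to the spine and second spine. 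A subdividable set could a priori contain both $v_1v_{n-1}$ and $v_{n-2}v_{n-1}$ together with only some spine edges; such a set lies in neither the spine nor the second spine, yet its existence is not excluded by your stated fact. What is needed --- and what the paper proves, via the $K_{2,4}$ minor in $G_{6,2,2}\circ v_1v_5\circ v_4v_5$ (Figure~\ref{fig:g_2m}, right) and contraction --- is that the two-element set $\{v_1v_{n-1},\,v_{n-2}v_{n-1}\}$ is itself not subdividable; since every subset of a subdividable set is subdividable, this immediately forces every subdividable set into the spine or the second spine. Your proposal would need to prove this pair statement (and, for your set-level ``governing principle,'' to verify carefully how branch sets containing subdivision vertices are converted back to branch sets of $G$, since adjacency changes under subdivision); as written, these are missing.
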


 \begin{proof}
 We first show that each claimed subdividable set is subdividable.  For the
wheel $G_{n,1,n-3}^+$, subdividing all edges of the rim and one spoke
gives a graph isomorphic to a subgraph of $G_{2n,2,2n-4}$, and hence
$K_{2,4}$-minor-free.  For $r \ge 2$ the graph formed by subdividing all
edges of the spine in $G_{n,r,s}^{(+)}$ is isomorphic to a subgraph of
another graph in $\G$ with $2n-1$ vertices, and thus
$K_{2,4}$-minor-free.  So the edge set of the spine is subdividable.
 When $r=2$ the second spine is the image under an isomorphism of the
spine in another (or possibly the same) member of $\G$, and hence the
edge set of the second spine is also subdividable.

 \smallskip
 Now we show that the sets of edges listed are maximal and are the only
subdividable sets. Begin with the wheel $G_{n,1,n-3}^+$. All edges of
the rim are in each set so we consider the spokes. If we subdivide two
adjacent spokes, we have the $K_{2,4}$ minor shown on the left in
Figure~\ref{fig:wheel1m}. A similar minor exists if we subdivide
nonadjacent spokes as long as $n \geq 6$. Hence we cannot divide two
spokes and the sets listed are maximal and are the only subdividable
sets of edges.   

\begin{figure}[h]
	\centering \scalebox{.55}{\includegraphics{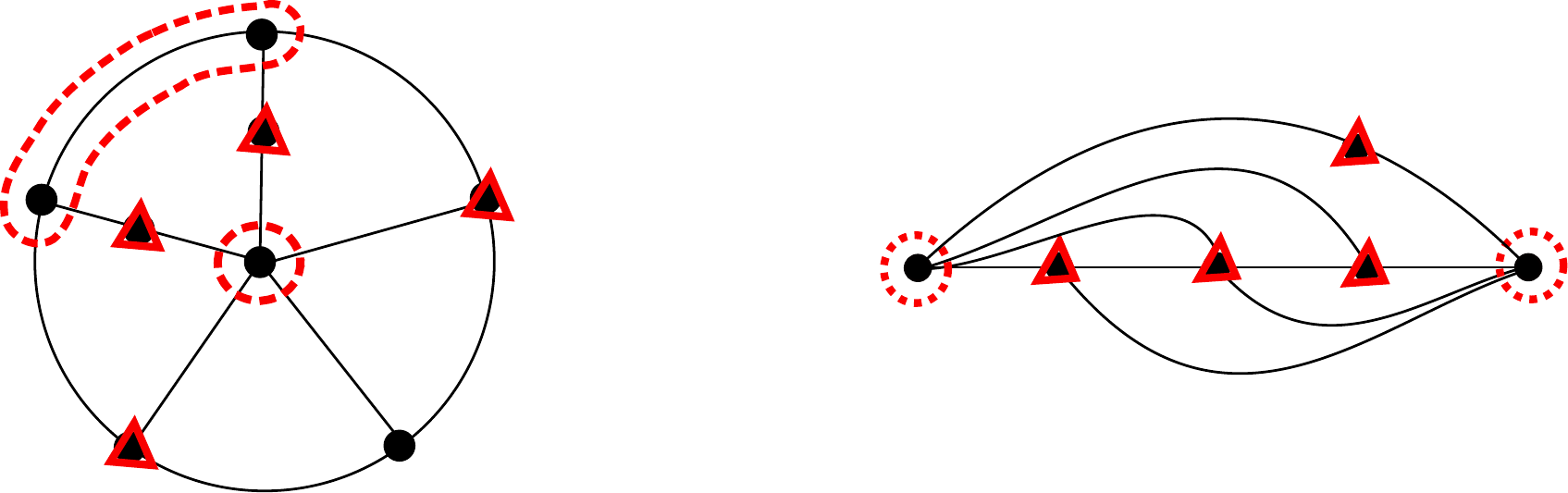}}
	\caption{\label{fig:wheel1m}} 
\end{figure}  
    
 Now assume $r,s \geq 2$.  For this portion of the proof, we remove the
assumption that $r \leq s$ (which is just for brevity in stating our
results).
 Denote by $G\circ e$ the graph formed from $G$ by subdividing the
edge $e$.
 We consider subdivision of non-spine edges $v_1v_{n-i}$ for $0 \leq i
\leq r$; edges $v_nv_{1+j}$ for $0 \leq j \leq s$ are handled by symmetry.
 The situations $i=0$ and $j=0$ correspond to a plus edge.

 We describe two cases in which we can find a $K_{2,4}$ minor. The
first, Case A, is the $K_{2,4}$ minor in $G_{5,2,2}^+\circ v_1v_5$ shown
on the right in Figure~\ref{fig:wheel1m}.
 If $s \ge 2$ and $0 \le i \le r-2$, then we form $G_{5,2,2}^+\circ
v_1v_5$, and hence $K_{2,4}$, as a minor from $G_{n,r,s}^{(+)}\circ
v_1v_{n-i}$ by contracting all edges of the paths $v_3v_4\ldots
v_{n-i-2}$ and $v_{n-i}v_{n-i+1}\ldots v_n$ and deleting multiple edges.

 The second case, Case B, is the $K_{2,4}$ minor in $G_{5,2,2}^+\circ
v_1v_3$ shown on the left in Figure~\ref{fig:g_2m}.
 Note that the minor does not use the edge $v_2v_5$. As with Case A,
this minor is inherited by the following larger graphs that have
$G_{5,2,2}^+\circ v_1v_3$ as a minor:

 \begingroup
 \hangitem (B1) $G_{n,r,s}^+\circ v_1v_{n-i}$ with $s \geq 2$ and $2
\leq i \leq r$;
 \hangitem (B2) $G_{n,r,s}\circ v_1v_{n-i}$ with $s \geq 2$ and $3 \leq
i \leq r$; and
 \hangitem (B3) $G_{n,r,s}^{(+)}\circ v_1v_{n-2}$ with $s \geq 3$.
 \endgroup

 \noindent
 For graphs in (B1), form $G_{5,2,2}^+\circ v_1v_3$ as a minor from
$G_{n,r,s}^+ \circ v_1v_{n-i}$ by contracting all edges of the paths
$v_3v_4\ldots v_{n-i}$ and $v_{n-i+1}v_{n-i+2}\ldots v_{n-1}$ and
deleting multiple edges as well as the edge $v_1v_3$ if it is present
after contraction.
 Similarly for graphs in (B2), contract all edges of the paths
$v_3v_4\ldots v_{n-i}$ and $v_{n-i+2}v_{n-i+3}\ldots v_n$ and delete
multiple edges and $v_1v_3$.
 For graphs in (B3), contract $v_1v_2$ and all edges of the path
$v_4v_5\ldots v_{n-2}$ and delete multiple edges and $v_1v_3$.

\begin{figure}[h]
	\centering \scalebox{.55}{\includegraphics{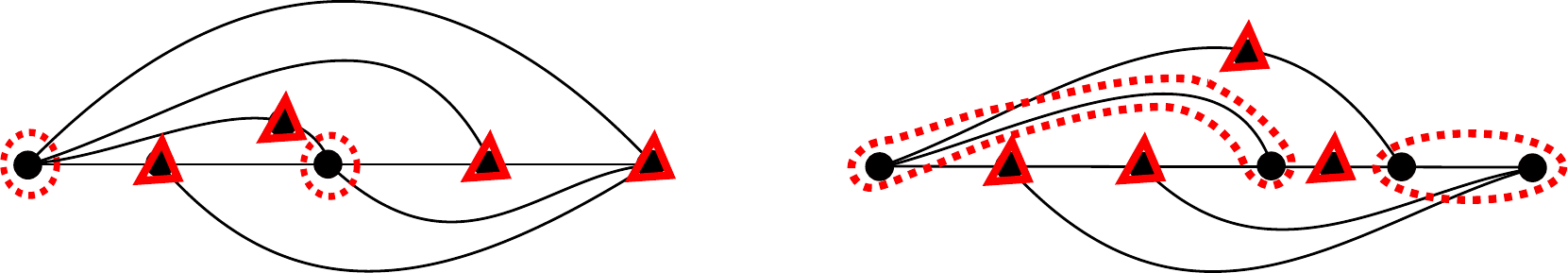}}
	\captionof{figure}{}
	\label{fig:g_2m} 
\end{figure}   

 For $G_{n,r,s}^{(+)}$ with $r,s \geq 3$, Case A shows that $v_1
v_{n-r-2}, v_1 v_{n-r-1}, \ldots, v_1 v_{n-1}$ and (if present) $v_1 v_n$
are not subdividable.
 Case B shows that $v_1v_{n-r},v_1v_{n-r+1},\ldots,v_1v_{n-2}$ are not
subdividable.
 By symmetry all non-spine edges incident to $v_n$ are not subdividable,
and hence the spine is the only maximal subdividable set of edges.

 Now either $r=2$ or $s=2$.  For our stated result we only need the case
$r = 2$ with $s$ as in (ii).
 Consider $G_{n,2,s}^{(+)}$. Case A forbids subdivision of $v_1v_n$ (if
present) and
(B3) does the same for $v_1v_{n-2}$.
 Applying symmetry, Case A forbids subdivision of $v_n v_{1+j}$ for $1
\le j \le s-2$, and Case B covers $v_n v_{1+j}$ for $2 \le j \le s$ if
there is a plus edge and $3 \le j \le s$ otherwise.  The conditions in
(ii) mean that these cover $v_n v_{1+j}$ for all $j$, $1 \le j \le s$.
 So the only possible subdividable non-spine edge is $v_1 v_{n-1}$,
which we already know is subdividable along with all edges of the spine
other than $v_{n-2} v_{n-1}$, as this is the edge set of the second
spine.
 So consider $v_1 v_{n-1}$ and $v_{n-2} v_{n-1}$ together.
 We use the $K_{2,4}$ minor in $G_{6,2,2} \circ v_1v_5 \circ v_4v_5$
shown on the right in Figure~\ref{fig:g_2m}.
 When $n \ge 6$, $G_{n,2,s}^{(+)} \circ v_1 v_{n-1} \circ
v_{n-2}v_{n-1}$ has $G_{6,2,2} \circ v_1v_5 \circ v_4v_5$, and hence
$K_{2,4}$, as a minor: delete $v_n v_{1+j}$ with $j=0$ (if present) and
$3 \le j \le s$, then contract all edges of $v_4 v_5 \ldots v_{n-2}$.
 Therefore $\{v_1 v_{n-1}, v_{n-2} v_{n-1}\}$ is not subdividable, and
the only maximal subdividable sets are the edge sets of the spine and
second spine.
 \end{proof}

\def\adjbox#1{\adjustbox{valign=c,margin=0pt 2pt}{#1}}
\begin{table}[h]
\caption{\label{table}}
\centering
\begin{tabular}{|c|c|c|}
\hline Graph   & Maximal Subdividable Sets of Edges &  Number of Symmetric Copies \\ \hline
\hline $K_4=W_4$ & \adjbox{\scalebox{.88}{\includegraphics{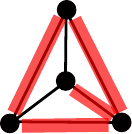}}} & 12 \\
\hline $\wheelfive$ ($\isom G_{5,2,2}$) & \adjbox{\scalebox{.8}{\includegraphics{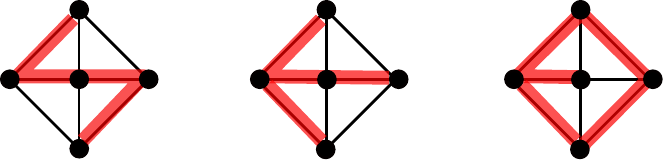}}} & 4 of each\\
\hline $G_{5,2,2}^+$ ($\isom K_5\edel e$) & \adjbox{\scalebox{.8}{\includegraphics{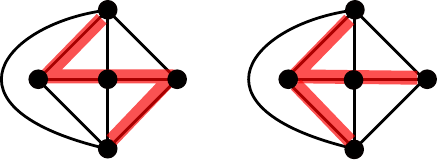}}} & 6 of each\\
\hline $G_{6,2,2}$ & edge set of spine & 6 \\
\hline $G_{6,2,2}^+$ ($\isom G_{6,2,3}$)
  & \adjbox{\pbox{3 in} {\quad edge set of spine in $G_{6,2,2}^+$ \\
                 edge set of second spine in $G_{6,2,2}^+$} }
  & \pbox{1 in} {\qquad spine: 1 \\ second spine: 2} \\
\hline $G_{7,2,3}$  & \adjbox{\pbox{3 in} {edge set of spine, edge set of second spine,\\
	\null\qquad\qquad $\{v_1v_2,v_4v_5,v_6v_7,v_3v_7\}$} } & 1 of each \\
\hline $K_5$  & $\emptyset$ & 1 \\
\hline $A, K_{3,3}$  & \adjbox{\scalebox{.8}{\includegraphics{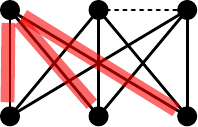}}} & \pbox{1in}{\quad $A$: 1 \\ $K_{3,3}$: 6} \\
\hline $A^+$  & \adjbox{\scalebox{.8}{\includegraphics{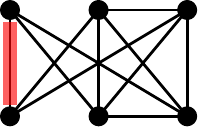}}} & 1 \\
\hline $B,B^+$  & \adjbox{\scalebox{.8}{\includegraphics{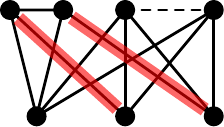}}} & 1 \\
\hline $C,C^+$  & \adjbox{\scalebox{.8}{\includegraphics{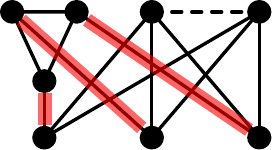}}} &  1 \\
\hline $D$  & \adjbox{\scalebox{.8}{\includegraphics{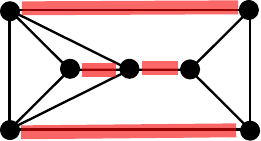}}} & 3 \\
\hline
\end{tabular}
\end{table}

 All remaining small graphs are covered by Table~\ref{table}.  Verifying
these results is straightforward; complete proofs may be found in 
\cite[Section 5.2]{dissertation}.  These results were also confirmed by
computer (the program may be obtained from the first author).
 The dashed edges in the table indicate edges present in one graph but
not the other.
 For example, in the row for $C$ and $C^+$, the dashed edge is present
in $C^+$ but not $C$.

 \begin{lemma} The maximal subdividable sets of edges for the nine small cases
not in $\G$ as well as $K_4 = W_4$, $W_5 \isom G_{5,2,2}$, $K_5\edel e
\isom G_{5,2,2}^+$, $G_{6,2,2}$, $G_{6,2,2}^+ \isom G_{6,2,3}$ and
$G_{7,2,3}$ are listed in Table~\ref{table}.
 \label{lem:subsmall}
 \end{lemma}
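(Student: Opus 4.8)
The plan is to verify Table~\ref{table} by a finite, graph-by-graph case analysis, exploiting two facts that keep the work bounded. First, as already noted before Theorem~\ref{lem:subG}, the subdividable sets of a $K_{2,4}$-minor-free graph form a downward-closed family, so it suffices to exhibit the maximal members. Second, each graph here has a large automorphism group, so it is enough to treat one edge from each orbit and one representative from each symmetry class of candidate sets, and then multiply by the appropriate index to obtain the count in the last column of the table. For each graph I would carry out two directions: showing that each listed set really is subdividable, and showing that no subdividable set properly contains or differs from the listed ones.

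For the positive direction I would reuse the device from the proof of Theorem~\ref{lem:subG}: after subdividing the edges of a claimed set $F$, exhibit the resulting graph as isomorphic to a subgraph of a known $K_{2,4}$-minor-free graph. For the spine and second-spine sets of $G_{6,2,2}$, $G_{6,2,2}^+$ and $G_{7,2,3}$ this is immediate, since subdividing the spine of a member of $\G$ with $r,s \ge 2$ yields a subgraph of a larger member of $\G$, and the second spine is handled by the isomorphism $\sigma$. For $K_4$, $W_5$ and $K_5\edel e$, and for the remaining small sets displayed in Table~\ref{table} (such as $\{v_1v_2,v_4v_5,v_6v_7,v_3v_7\}$ in $G_{7,2,3}$), the subdivided graph has few vertices, so I would either embed it in a member of $\Gi$ or check directly that it contains no standard $K_{2,4}$ minor, arguing as in Lemmas~\ref{lem:scC+} and~\ref{lem:scD} about where the two branch sets $\ptwo_1,\ptwo_2$ and the set $S$ can lie.

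For the negative direction I would, for each edge or pair of edges not lying in a listed maximal set, produce an explicit standard $K_{2,4}$ minor in the corresponding subdivision. Many of these are inherited from the base minors already identified: subdividing a non-spine chord of a $\G$-graph reduces, after contracting spine segments and deleting multiple edges, to Case A ($G_{5,2,2}^+\circ v_1v_5$) or Case B ($G_{5,2,2}^+\circ v_1v_3$), and subdividing $v_1v_{n-1}$ and $v_{n-2}v_{n-1}$ together reduces to $G_{6,2,2}\circ v_1v_5\circ v_4v_5$, exactly as in Theorem~\ref{lem:subG}. For the nine small graphs not in $\G$ I would exhibit the minors directly on the labelled drawings, using the triangle/neighbour-separation obstruction from the proof of Lemma~\ref{lem:scC+}: the two vertices forming one side of a rooted $K_{2,2}$ need four internally disjoint connections to $S$, and a subdivided edge typically supplies the fourth connection that the unsubdivided graph lacked.

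The main obstacle is completeness rather than either individual direction: I must be sure the listed sets are \emph{all} the maximal subdividable sets, not merely that they are subdividable and maximal. To close this gap I would argue that any subdividable set $F$ avoids every individually non-subdividable edge, and that the remaining subdividable edges are compatible (pairwise, and in triples where relevant) only in the configurations listed, so that $F$ is contained in one of them. Because each graph is small and highly symmetric, this amounts to finitely many compatibility checks; these, together with the explicit minors above, are exactly the verifications recorded in \cite[Section 5.2]{dissertation} and reconfirmed by the computer search mentioned after the statement.
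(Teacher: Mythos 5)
Your proposal is correct and takes essentially the same approach as the paper: for Lemma~\ref{lem:subsmall} the paper gives no inline case analysis at all, stating only that the verification is straightforward and deferring the complete finite check to \cite[Section 5.2]{dissertation} and a computer search, which is precisely the graph-by-graph verification (positive direction via embedding subdivided graphs into members of $\Gi$ or direct minor arguments, negative direction via explicit $K_{2,4}$ minors inherited from Cases A and B, plus the compatibility checks needed for completeness) that you outline. Your techniques mirror those in the paper's proof of Theorem~\ref{lem:subG}, so there is nothing substantive to add.
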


 As mentioned earlier, a graph $G$ is $K_{2,4}$-minor-free if and only
if each of its blocks is $K_{2,4}$-minor-free, so our overall result can
now be stated as follows.

 \begin{theorem}[Characterization of $K_{2,4}$-minor-free graphs]
 A graph is $K_{2,4}$-minor-free if and only if each of
its blocks is described by Theorem~\ref{lem:2connclass}, where for
Theorem~\ref{lem:2connclass} (iii), the $3$-connected graphs are given
in Theorem~\ref{thm:main24} and the subdividable sets are described in
Theorem~\ref{lem:subG} and Lemma~\ref{lem:subsmall}.  
 \label{thm:combin}
 \end{theorem}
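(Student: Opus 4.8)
The plan is to assemble the preceding results into a single statement, so the proof is essentially a matter of confirming that the component characterizations cover every case without gaps. First I would invoke the block reduction observed in the introduction: $K_{2,4}$ is $2$-connected, so any model of $K_{2,4}$ in $G$ lies inside a single block of $G$, and consequently $G$ is $K_{2,4}$-minor-free if and only if every block of $G$ is. This reduces the theorem to characterizing $K_{2,4}$-minor-free blocks.

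For a single block I would apply Theorem~\ref{lem:2connclass}, which states that a block is $K_{2,4}$-minor-free exactly when it is (i) outerplanar, (ii) a union of three $xy$-outerplanar graphs together with possibly the edge $xy$, or (iii) obtained from a $3$-connected $K_{2,4}$-minor-free graph $G_0$ by replacing the edges of a subdividable set with $xy$-outerplanar graphs. Cases (i) and (ii) are already fully explicit, so the only remaining task is to spell out case (iii) by naming the admissible graphs $G_0$ and describing all subdividable sets in each.

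This is where I would combine the three enumeration results. Theorem~\ref{thm:main24} lists every $3$-connected $K_{2,4}$-minor-free graph, namely the members of $\Gi$ and the nine small exceptions. For subdividability I would note that, because every subset of a subdividable set is subdividable, a set of edges is subdividable precisely when it is contained in one of the maximal subdividable sets; hence it suffices to have the maximal subdividable sets of each $G_0$. Theorem~\ref{lem:subG} supplies these for $G^{(+)}_{n,r,s}\in\G$ with $n\ge 6$, and Lemma~\ref{lem:subsmall} (via Table~\ref{table}) supplies them for the remaining graphs. I would check that these two sources together cover all of $\Gi$ and the nine exceptions: Lemma~\ref{lem:subsmall} accounts for the nine exceptions, the three graphs of $\G$ with $n\le 5$ ($K_4$, $W_5\isom G_{5,2,2}$, and $K_5\edel e\isom G^+_{5,2,2}$), and the three graphs with $n\ge 6$ excluded from Theorem~\ref{lem:subG}, namely $G_{6,2,2}$, $G^+_{6,2,2}\isom G_{6,2,3}$, and $G_{7,2,3}$; everything else is covered by Theorem~\ref{lem:subG}.

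No step of this argument is genuinely difficult; the one place demanding care is the completeness bookkeeping in the last paragraph. Theorem~\ref{lem:subG} is stated for $G^{(+)}_{n,r,s}\in\G$ with $n\ge 6$ but silently excludes the three special graphs and restricts the ranges of $s$ in part (ii) (requiring $s\ge 4$ without the plus edge and $s\ge 3$ with it). I would therefore verify, using the explicit description of $\G$ coming from Lemmas~\ref{lem:G3conn} and~\ref{lem:GK24}, that the only $G^{(+)}_{n,r,s}\in\G$ with $n\ge 6$ falling outside the ranges of Theorem~\ref{lem:subG} are exactly the three graphs handled separately in Table~\ref{table}, so that nothing slips through the gap between the two sources.
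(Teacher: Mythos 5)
Your proposal is correct and matches the paper's treatment: the paper states this theorem as a direct assembly of the block reduction (asserted in the introduction), Theorem~\ref{lem:2connclass}, Theorem~\ref{thm:main24}, Theorem~\ref{lem:subG}, and Lemma~\ref{lem:subsmall}, offering no further proof. Your extra bookkeeping—verifying that the graphs omitted from Theorem~\ref{lem:subG} ($K_4$, $W_5$, $K_5\edel e$, $G_{6,2,2}$, $G^+_{6,2,2}\isom G_{6,2,3}$, $G_{7,2,3}$) are exactly those covered by Table~\ref{table}—is accurate and only makes explicit what the paper leaves implicit.
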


 \section{Consequences}\label{consequences}

 Our characterization has a number of consequences.  First, as mentioned
in the introduction, we are interested in hamiltonian properties of
$K_{2,4}$-minor-free graphs.

 \begin{corollary} (i) Every $3$-connected $K_{2,4}$-minor-free graph
has a hamilton cycle.

 \noindent
 (ii) There are $2$-connected $K_{2,4}$-minor-free planar graphs that
have no spanning closed trail and hence no hamilton cycle.

 \noindent
 (iii) However, every $2$-connected $K_{2,4}$-minor-free graph has a
hamilton path.
 \end{corollary}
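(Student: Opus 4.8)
The plan is to read each part directly off the structural characterization, using Theorem~\ref{thm:main24} for the $3$-connected statement (i) and Theorem~\ref{lem:2connclass} for the $2$-connected statements (ii) and (iii). For part (i), Theorem~\ref{thm:main24} says a $3$-connected $K_{2,4}$-minor-free graph $G$ is either in $\Gi$ or one of the nine small exceptions of Figure~\ref{fig:smallcasesnew}; the nine exceptions are small and can be checked to be hamiltonian by inspection. For $G\isom G^{(+)}_{n,r,s}\in\Gi$ I would split on the plus edge. If the plus edge $v_1v_n$ is present, the spine together with it, $v_1v_2\cdots v_nv_1$, is a hamilton cycle. If it is absent then $G=G_{n,r,s}$ and Lemma~\ref{lem:G3conn} forces $r,s\ge2$; then $v_1v_2v_nv_3v_4\cdots v_{n-1}v_1$ is a hamilton cycle, since $v_2v_n$ and $v_nv_3$ are edges as $s\ge2$, the edge $v_{n-1}v_1$ is present as $r\ge1$, and the remaining edges lie on the spine.

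For part (ii) it is enough to exhibit one witness, and I would take $K_{2,3}$: it is $2$-connected and planar, and having only five vertices it is trivially $K_{2,4}$-minor-free. In any spanning closed trail every vertex has positive even degree in the trail, so the three degree-$2$ vertices must each use both of their edges; this forces one of the two vertices on the opposite side of the bipartition to have degree $3$ in the trail, which is odd, a contradiction. Hence $K_{2,3}$ has no spanning closed trail, and a fortiori no hamilton cycle.

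For part (iii), let $G$ be $2$-connected, hence a block, so Theorem~\ref{lem:2connclass} applies. If $G$ is outerplanar (its case (i)), its outer boundary is a spanning cycle and so contains a hamilton path. If $G$ is a union of three $xy$-outerplanar graphs $H_1,H_2,H_3$ (its case (ii)), I would use that each $H_i$ has a hamilton $xy$-path $P_i$ (its outer path) and, having at least three vertices, an internal vertex. Running from the $y$-end of $H_1$ through its internal vertices to $x$ (the part of $P_1$ avoiding $y$), then through all of $P_2$ from $x$ to $y$, then from $y$ through the internal vertices of $H_3$ (the part of $P_3$ avoiding $x$), gives a path meeting every vertex exactly once, since $x$ and $y$ are each traversed once; this is the required hamilton path.

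In the remaining case (iii) of Theorem~\ref{lem:2connclass}, $G$ arises from a $3$-connected $K_{2,4}$-minor-free graph $G_0$ by replacing the edges of a subdividable set $F$ with $x_iy_i$-outerplanar gadgets $H_i$, each of which again carries a hamilton $x_iy_i$-path. The clean approach is to find a hamilton path $W$ of $G_0$ whose edge set contains $F$ and then substitute, along $W$, each gadget's hamilton $x_iy_i$-path for the corresponding edge, producing a hamilton path of $G$. The main obstacle is that $F$ need not lie on a single hamilton path of $G_0$: by Theorem~\ref{lem:subG} the subdividable set of a wheel is the entire rim plus one spoke, which is not a path, and the small graphs require a separate check via Lemma~\ref{lem:subsmall}. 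The spine and second-spine sets cause no trouble, since these are themselves hamilton paths of $G_0$ and may be taken as $W$. For a wheel I would instead use a ``tail'' construction: traverse the thick spoke from the hub to $v_1$, run around the thick rim $v_1v_2\cdots v_{n-1}$, and append the internal vertices of the last thick rim gadget on $v_{n-1}v_1$ as a pendant tail ending inside that gadget; since $v_{n-1}$ is a path endpoint, every vertex is covered. The correctness of each substitution is underwritten by Lemma~\ref{lem:lemma_r} together with the hamilton-path property of $xy$-outerplanar graphs, leaving only the finitely many small $G_0$ and their tabulated subdividable sets to be verified case by case.
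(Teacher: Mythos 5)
Your proposal is correct and follows essentially the same route as the paper's proof: parts (i) and (ii) are read off the structure theorems with the same witnesses (explicit hamilton cycles in $G^{(+)}_{n,r,s}$, inspection of the nine exceptions, and $K_{2,3}$ for the non-traceable-by-closed-trail example), and your part (iii) — substituting gadget hamilton paths along a hamilton path of $G_0$ containing the subdividable set, with a pendant tail into the last rim gadget in the wheel case — is exactly the paper's ``hamilton base'' argument (a hamilton path extended by an edge at one or both ends, which after subdividing any of its edges still yields a graph with a hamilton path) in unpackaged form, with the same deferral of the finitely many small graphs of Table~\ref{table} to a case check. One cosmetic point: Lemma~\ref{lem:lemma_r} concerns $K_{2,4}$-minor-freeness and plays no role in justifying the substitution step; all that is needed there is that every $x_iy_i$-outerplanar gadget has a hamilton $x_iy_i$-path (its outer path), which holds by definition.
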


 \begin{proof} (i) The graph $G^{(+)}_{n,r,s} \in \G$ has a hamilton
cycle $(v_1 v_2 \ldots v_{s+1} v_n v_{n-1} \ldots v_{s+2})$.  The graphs
in Figure~\ref{fig:smallcasesnew} are also all hamiltonian.

 \smallskip
 \noindent
 (ii) A $2$-connected graph described by Theorem
\ref{lem:2connclass}(ii) is planar and has no closed spanning trail; the
simplest example is $K_{2,3}$.  (It is also possible to construct
examples using Theorem \ref{lem:2connclass}(iii).)

 \smallskip
 \noindent
 (iii) Define a \textit{hamilton base} in a graph to be a hamilton path
extended by a new edge at one or both ends, i.e., a trail of the form
$x_0 x_1 x_2 \ldots x_n x_{n+1}$, $x_1 x_2 \ldots x_n x_{n+1}$, or $x_1
x_2 \ldots x_n$, where $x_1 x_2 \ldots x_n$ is a hamilton path.
 If $B$ is a hamilton base in a graph $G_0$ and $G_1$ is obtained from
$G_0$ by subdividing the elements of a subset of $E(B)$ arbitrarily many
times, we observe that $G_1$ has a hamilton path.

 Now consider a $2$-connected $K_{2,4}$-minor-free graph $G$. If $G$ is
described by Theorem \ref{lem:2connclass}(i) then $G$ is hamiltonian. 
 Suppose $G$ is described by Theorem \ref{lem:2connclass}(iii), as
constructed from a $3$-connected $K_{2,4}$-minor-free graph $G_0$ by
replacing each edge of a subdividable set $S= \{x_1 y_1, x_2 y_2,
\ldots, x_k y_k\}$ by an $x_i y_i$-outerplanar graph.  Then $G$ has a
spanning subgraph $G_1$ which is obtained from $G_0$ by subdividing each
edge of $S$ some number of times.  If $G_0$ has a hamilton base
containing $S$, then $G_1$, and hence $G$, has a hamilton path.  So we
just need to verify that each maximal subdividable set of edges in $G_0$
is contained in a hamilton base.  Each subdividable set from Theorem
\ref{lem:subG} itself forms a hamilton base, and it is not difficult to
show that the subdividable sets from Lemma \ref{lem:subsmall} (Table
\ref{table}) are contained in hamilton bases; we omit the details.
 Finally, if $G$ is described by Theorem \ref{lem:2connclass}(ii) then
$G$ has a spanning subgraph $G_1$ obtained by subdividing edges of $G_0
= K_{2,3}$, and $K_{2,3}$ has a hamilton base containing all its edges,
so a similar argument applies.
 \end{proof}

 Second, a theorem of Dieng and Gavoille mentioned earlier can be
derived from our results.  We state it and just outline a proof.

 \begin{corollary}[Dieng and Gavoille, see {\cite[Th\'{e}or\`{e}me
3.2]{dieng}}]
 \label{cor:dg}
 For every $2$-connected $K_{2,4}$-minor-free graph $G$ there is $U
\subseteq V(G)$ with $|U| \le 2$ ($|U| \le 1$ if $G$ is planar)
such that $G\vdel U$ is outerplanar.
 \end{corollary}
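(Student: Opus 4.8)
The plan is to run the structural dichotomy of Theorem~\ref{lem:2connclass}: a $2$-connected $K_{2,4}$-minor-free graph $G$ is (i) outerplanar, (ii) a union of three $xy$-outerplanar graphs $H_1,H_2,H_3$ together with possibly the edge $xy$, or (iii) built from a $3$-connected $K_{2,4}$-minor-free graph $G_0$ by replacing the edges of a subdividable set by $xy$-outerplanar pieces. I would dispose of the first two cases immediately. In case (i) take $U=\emptyset$. In case (ii), deleting $x$ leaves $H_1-x$, $H_2-x$, $H_3-x$, three outerplanar graphs meeting only in $y$; a graph glued from outerplanar pieces at a single cutvertex is outerplanar, so $U=\{x\}$ works. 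Each of these gives $|U|\le 1$, which meets the requirement whether or not $G$ is planar.

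The substance is case (iii). I would first record the planarity bookkeeping: since $G$ arises from $G_0$ by replacing edges with planar $xy$-outerplanar pieces whose roots lie on the outer boundary, and since $G_0$ is a minor of $G$ (contract each piece back to an edge), $G$ is planar exactly when $G_0$ is. By Theorem~\ref{thm:main24} the planar choices for $G_0$ are the members of $\Gi$ together with $D$, while the nonplanar ones are the eight remaining small exceptions. So it suffices to produce, for each $G_0$ and each subdividable set $S$ realized in $G$, a set $U_0\subseteq V(G_0)$ with $|U_0|\le 1$ when $G_0$ is planar and $|U_0|\le 2$ otherwise, such that $G-U_0$ is outerplanar.

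The mechanism for transferring outerplanarity from $G_0$ to $G$ is the observation that if an outerplane embedding of $G_0-U_0$ carries every surviving edge of $S$ on its outer boundary, then each such edge can be expanded into its $xy$-outerplanar replacement along that boundary, while each replacement whose edge met $U_0$ becomes an outerplanar graph hanging from a single surviving root; in both situations outerplanarity is preserved. Thus the task reduces to finding $U_0$ so that $G_0-U_0$ is outerplanar \emph{and} admits an outerplane embedding carrying the edges of $S$ on its outer cycle. For $G_0=G^{(+)}_{n,r,s}\in\Gi$ that is not a wheel I would take $U_0=\{v_1\}$; then $G_0-v_1$ is outerplanar (as already used in the proof of Lemma~\ref{lem:GK24}), with outer Hamilton cycle $v_2v_3\ldots v_nv_2$, so the surviving spine and second-spine edges all lie on the boundary. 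For a wheel I would delete the hub, leaving the rim cycle with all rim edges on the boundary. The remaining finitely many graphs---the small exceptions and the special members $G_{6,2,2}$, $G_{6,2,3}$, $G_{7,2,3}$---would be checked by hand using the subdividable sets listed in Theorem~\ref{lem:subG} and Lemma~\ref{lem:subsmall}.

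The main obstacle is precisely this boundary condition in case (iii): a vertex whose deletion makes $G_0$ outerplanar need not place the chosen subdividable edges on the outer cycle, and a replacement inserted along an interior chord destroys outerplanarity. This already occurs for $G_{7,2,3}$ with its exceptional subdividable set $\{v_1v_2,v_4v_5,v_6v_7,v_3v_7\}$: deleting $v_1$ leaves $v_3v_7$ as an interior chord, so instead one must delete $v_7$, after which the surviving edges $v_1v_2$ and $v_4v_5$ lie on the boundary and the pieces at $v_3v_7$ and $v_6v_7$ merely hang from $v_3$ and $v_6$. Consequently the correct $U_0$ depends on which subdividable set is realized in $G$, and this finite, case-dependent verification for the small and special graphs is the one genuinely delicate part of the argument.
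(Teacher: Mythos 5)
Your proposal follows essentially the same route as the paper's own (sketched) proof: cases (i) and (ii) are handled identically, and case (iii) is reduced, exactly as in the paper, to showing that for each $3$-connected $K_{2,4}$-minor-free $G_0$ and each maximal subdividable set $F$ there is a set $U_0$ of at most two vertices (one when $G_0$ is planar) whose deletion leaves an outerplane embedding with the surviving edges of $F$ on the outer face, followed by a finite check of the small graphs. The only cosmetic differences are your choice of deleted vertex for members of $\G$ ($v_1$, or the hub for wheels, instead of the paper's $v_n$, both of which work) and your explicit discussion of the exceptional subdividable set of $G_{7,2,3}$, which the paper relegates to the check of Table~\ref{table}.
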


 \begin{proof}[Sketch of proof]
 Consider the structure of $G$ as described in Theorem
\ref{lem:2connclass}.  If (i) holds no vertices need to be deleted, and
if (ii) holds then one of $x$ or $y$ can be deleted.  To verify the
result when (iii) holds, it suffices to show that for every
$3$-connected $K_{2,4}$-minor-free $G_0$ and every maximal subdividable
set of edges $F$ in $G_0$, there is $U \subseteq V(G_0)$ with $|U| \le
2$ ($|U| \le 1$ if $G_0$ is planar) so that $G_0 \vdel U$ has an
outerplane embedding with all remaining edges of $F$ (those not incident
with $U$) on the outer face. 
 If $G_0 = G_{n,r,s}^{(+)} \in \G$ is covered by Theorem
\ref{lem:subG} then $G_0-v_n$ always works.
 The result must be checked for the small graphs in Table \ref{table}.
 \end{proof}

 Dieng and Gavoille in fact showed that there is an $O(n)$ time
algorithm to find either a $K_{2,4}$ minor or a set $U$ as in Corollary
\ref{cor:dg} in any $n$-vertex graph.

 Third, our result also gives bounds on genus.

 \begin{corollary}
 \label{cor:genus}
 Every $2$-connected $K_{2,4}$-minor-free graph is either planar or else
toroidal and projective-planar.  Thus, its orientable and nonorientable
genus are at most $1$.
 \end{corollary}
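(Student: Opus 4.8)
The plan is to feed the structure theorem (Theorem~\ref{lem:2connclass}) into two general facts: that embeddability in a fixed surface is preserved under taking minors, and that replacing an edge by a path-outerplanar graph preserves the set of surfaces in which a graph embeds. Since every member of $\Gi$ is planar, the only genuine work will be to determine the genus of the finitely many small exceptions of Theorem~\ref{thm:main24}. I would phrase the target of the reduction as: every $3$-connected $K_{2,4}$-minor-free graph is planar, or else both toroidal and projective-planar.

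First I would dispose of cases (i) and (ii) of Theorem~\ref{lem:2connclass}. Outerplanar graphs are planar, so (i) is immediate. For (ii), I would check that the union of three $xy$-outerplanar graphs $H_1,H_2,H_3$, together with the edge $xy$ if present, is planar. Draw three internally disjoint arcs from $x$ to $y$ in the plane (a theta-graph), using them as the outer paths $P_1,P_2,P_3$ of the three pieces; these arcs cut the plane into three regions, and since each $H_i$ has an $xy$-outerplane embedding, its non-path edges can be realised as non-crossing chords on one side of $P_i$, which I would place in a distinct one of the three regions. The edge $xy$, if present, is inserted as a fourth arc with an empty region on one side. Thus types (i) and (ii) are planar and satisfy the conclusion.

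The decisive reduction is case (iii). Here I would prove the following local lemma: if $G_0$ embeds in a surface $\Sigma$ and $G$ arises from $G_0$ by replacing an edge $uv$ with a $uv$-outerplanar graph $H$ meeting $G_0$ only in $\{u,v\}$, then $G$ embeds in $\Sigma$ as well. The point is that the embedded edge $uv$ has a closed-disk neighbourhood $\Delta\subseteq\Sigma$ meeting $G_0$ only along $uv$, with $u,v\in\partial\Delta$; deleting the interior of $uv$ and inserting a $uv$-outerplane embedding of $H$ into $\Delta$ (its outer path bulging into $\Delta$ with its two ends at $u$ and $v$) produces an embedding of $G$ in $\Sigma$. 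Carrying out these replacements simultaneously in pairwise disjoint disks, one for each edge of the subdividable set, shows that a graph $G$ of type (iii) embeds in every surface in which its underlying $3$-connected graph $G_0$ embeds. Note this is the genuinely needed direction: $G_0$ is in fact a minor of $G$, so minor-monotonicity alone runs the wrong way. This completes the reduction to the $3$-connected statement.

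Finally I would settle the $3$-connected case using Theorem~\ref{thm:main24}. Graphs in $\Gi$ are planar, and among the nine small exceptions only $D$ is planar, the other eight containing a $K_{3,3}$ minor. By Corollary~\ref{cor:minorsC+} and Lemma~\ref{lem:allminors}, each of $C$, $B^+$, $B$, $A^+$, $A$, $K_{3,3}$ is a minor of $C^+$, and $K_5$ is a minor of $A^+$ and hence of $C^+$; so all eight nonplanar exceptions are minors of $C^+$. Because the classes of toroidal graphs and of projective-planar graphs are both minor-closed, it therefore suffices to exhibit one embedding of $C^+$ in the torus and one in the projective plane. I expect this last, concrete step to be the main obstacle: producing and verifying these two embeddings of the $8$-vertex, $13$-edge graph $C^+$, with Euler's formula giving the consistent face counts $F=E-V=5$ on the torus and $F=E-V+1=6$ on the projective plane. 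Once these two embeddings are in hand, minor-monotonicity delivers the conclusion for all eight exceptions, and the reduction above then extends it to every $2$-connected $K_{2,4}$-minor-free graph.
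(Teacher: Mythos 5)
Your proposal is correct and takes essentially the same route as the paper's (much terser) proof: reduce via Theorem~\ref{lem:2connclass} --- types (i) and (ii) are planar, and a type (iii) graph embeds in any surface in which its underlying $3$-connected graph $G_0$ embeds --- and then note that the $3$-connected graphs of Theorem~\ref{thm:main24} are planar or minors of $C^+$, which is both toroidal and projective-planar. Your edge-replacement embedding lemma is precisely the justification the paper leaves implicit when it asserts that a type (iii) graph ``has the same genus as'' $G_0$, and, like the paper (which says only that the embeddings are ``not difficult to find''), you leave the explicit torus and projective-plane embeddings of $C^+$ as the one unexhibited concrete check.
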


 \begin{proof}
 The $3$-connected graphs described in Theorem \ref{thm:main24} are
planar or minors of $C^+$, and it is not difficult to find toroidal and
projective-planar embeddings of $C^+$.
 For connectivity $2$ the graphs $G$ constructed in Theorem
\ref{lem:2connclass} are either planar or have the same genus as
some $3$-connected $K_{2,4}$-minor-free graph $G_0$.
 \end{proof}

 Note that Corollary \ref{cor:genus} does not follow from Dieng and
Gavoille's result, Corollary \ref{cor:dg}, since a result of Mohar
\cite{Mo01apex} implies that graphs which become outerplanar after
deleting two vertices can have arbitrarily high (orientable) genus.

 Fourth, our result shows that the number of $3$-connected
$K_{2,4}$-minor-free graphs grows only linearly.
 For $n \ge 9$ the only such $n$-vertex graphs are those in $\Gi$, and
there are only $2n-8$ nonisomorphic such graphs.  Although we have not
done so, it should also be possible to deduce counting results for
$2$-connected $K_{2,4}$-minor-free graphs from our characterization.

 Finally, Chudnovsky, Reed and Seymour \cite{chud} showed that the
number of edges in a $3$-connected $K_{2,t}$-minor-free graph is at most
$5n/2 + c(t)$.  They provide examples to show that this is in a sense
best possible for $t \ge 5$.  Theorem \ref{thm:main24} shows that this
can be improved when $t=4$.  Using Theorem \ref{lem:2connclass} we can
also obtain a result for $2$-connected $K_{2,4}$-minor-free graphs.  We
omit the straightforward proofs, which use the fact that an $n$-vertex
outerplanar graph has at most $2n-3$ edges.

 \begin{corollary}
 %

 (i) Every $3$-connected $K_{2,4}$-minor-free $n$-vertex graph with $n
\ge 7$ has at most $2n-2$ edges, and such graphs with $2n-2$ edges exist
for all $n \ge 7$.  ($K_5$ has $2n$ edges, and $A^+$ has
$2n-1$ edges.)

 \noindent
 (ii) Every $2$-connected $K_{2,4}$-minor-free $n$-vertex graph with $n
\ge 6$ has at most $2n-1$ edges, and such graphs with $2n-1$ edges exist
for all $n \ge 6$.  ($K_5$ has $2n$ edges.)
 \end{corollary}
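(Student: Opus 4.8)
The plan is to read both bounds off the two structure theorems, since in each case the maximum edge count is attained by an explicit extremal family. For part~(i) I would invoke Theorem~\ref{thm:main24}: a $3$-connected $K_{2,4}$-minor-free $G$ is either a member of $\Gi$ or one of the nine small graphs. For $G=G^{(+)}_{n,r,s}\in\G$ the edge count is immediate from the definition, namely $(n-1)+r+s$ without the plus edge and $n+r+s$ with it, and Lemma~\ref{lem:GK24} forces $r+s\le n-1$. Feeding this constraint into the two expressions, and separately into the wheel case $G^+_{n,1,n-3}$, pins down which graphs are densest; the decisive point is whether the plus edge is present, since that is exactly the feature that separates the top two edge counts. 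For the nine exceptions I would simply tabulate $|E|$ against $|V|$ by hand (all have at most eight vertices), observing that only $K_5$ and $A^+$ are dense enough to matter and that both have fewer than seven vertices, so they are irrelevant to the range $n\ge 7$. Taking the maximum over $\Gi$ and over the remaining small graphs then yields the bound, with the extremal members of $\Gi$ having $r+s=n-1$ supplying the matching examples for every $n$.

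For part~(ii) I would run the three cases of Theorem~\ref{lem:2connclass}. Case~(i) is immediate: an outerplanar graph on $n$ vertices has at most $2n-3$ edges. For case~(ii), writing $n_i=|V(H_i)|$, each $H_i+xy$ is outerplanar by Lemma~\ref{lem:xy-op-prop}(ii) and so has at most $2n_i-3$ edges; since $n=n_1+n_2+n_3-4$ and the edge $xy$ is shared by the three pieces, a short inclusion--exclusion gives $|E(G)|\le 2n-3$, comfortably inside the target. The substantive case is~(iii). Here the key observation is that the quantity $|E|-2|V|$ cannot increase under the replacement operation: replacing an edge $x_iy_i$ by an $x_iy_i$-outerplanar graph $H_i$ removes one edge, adds $n_i-2$ vertices, and (because $H_i+x_iy_i$ is outerplanar) introduces at most $2n_i-3$ edges, for a net edge gain of at most $2(n_i-2)$. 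Hence if $G$ is built from a $3$-connected $G_0$ with $n_0$ vertices and $m_0$ edges, then $|E(G)|-2|V(G)|\le m_0-2n_0$, so $|E(G)|\le 2|V(G)|+(m_0-2n_0)$; plugging in the $3$-connected edge bound from part~(i) gives the stated bound, with equality already attained by the $3$-connected extremal graphs themselves (taking no replacements).

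The main obstacle I anticipate is purely in the bookkeeping of case~(i): correctly organizing the edge count of the two-parameter family $G^{(+)}_{n,r,s}$ so that the genuine maximizer is identified, and in particular keeping careful track of the plus edge, which is what drives the extremal count. A secondary point requiring care is the telescoping inequality in case~(iii): one must verify that $2(n_i-2)$ is really the correct net increment whether or not $H_i$ happens to contain the edge $x_iy_i$, so that $|E|-2|V|$ is genuinely monotone and the bound for $G$ reduces cleanly to the $3$-connected bound. Once these two computations are set up, the finite check of the nine exceptional graphs and the exhibition of sharp examples via $G^{(+)}_{n,r,s}$ with $r+s=n-1$ are routine.
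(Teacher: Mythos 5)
Your overall strategy---read part (i) off Theorem~\ref{thm:main24}, then propagate it to part (ii) through the three cases of Theorem~\ref{lem:2connclass} using the invariant $|E|-2|V|$---is the natural one, and since the paper omits this proof as ``straightforward'' there is no detailed argument to compare against. However, your part (i) has a genuine gap at exactly the point you defer: you never carry out the maximization over $\G$, and carrying it out does not give $2n-2$. By the paper's explicit description of $\G$ (displayed after Lemma~\ref{lem:GK24}), the graphs $G^+_{n,r,s}$ with $r,s\ge 2$ and $r+s=n-1$ belong to $\G$: they are $3$-connected by Lemma~\ref{lem:G3conn}(ii), and $K_{2,4}$-minor-free by Lemma~\ref{lem:GK24}, which allows $r+s\le n-1$ \emph{with the plus edge present}. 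Such a graph has $n+r+s=2n-1$ edges and exists for every $n\ge 7$ (take $G^+_{n,2,n-3}$; concretely, $G^+_{7,3,3}$ is the $3$-sum of two copies of $W_5$ retaining all three triangle edges, with $7$ vertices and $13$ edges, and one can verify directly that it is $3$-connected with no $K_{2,4}$ minor). So the maximum of $|E|$ over $n$-vertex members of $\Gi$ is $2n-1$, not $2n-2$, and no completion of your computation can establish part (i) as stated: the stated bound is actually inconsistent with Theorem~\ref{thm:main24} and Lemma~\ref{lem:GK24} as the paper presents them (the bound in (i) should read $2n-1$, which also removes the need to except $A^+$). Your remark that ``the decisive point is whether the plus edge is present'' identifies precisely this issue and then leaves it unresolved; asserting that the extremal members have $r+s=n-1$ without settling the plus edge is exactly the difference between $2n-2$ and $2n-1$.

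There is also a secondary gap in part (ii): ``plugging in the $3$-connected edge bound from part (i)'' is not a uniform step, because part (i) only covers $n_0\ge 7$ and the small graphs are exceptional. In particular $K_5$ has $m_0-2n_0=0$, so your telescoping inequality alone yields only $|E(G)|\le 2n$; you must additionally invoke Lemma~\ref{lem:subsmall} (Table~\ref{table}), which says the only subdividable set of $K_5$ is empty, so case (iii) with $G_0=K_5$ produces no graph on $n\ge 6$ vertices. After that check (every other admissible $G_0$ satisfies $m_0\le 2n_0-1$), your invariant argument is sound and does give the bound $2n-1$ of part (ii); note that equality is already attained by $3$-connected graphs on $n\ge 7$ vertices, which once more contradicts the $2n-2$ claim of part (i).
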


\end{document}